\DeclareMathOperator{\Out}{Out}
\DeclareMathOperator{\Inv}{Inv}
\DeclareMathOperator{\Aut}{Aut}
\DeclareMathOperator{\Der}{Der}
\DeclareMathOperator{\Lie}{Lie}
\DeclareMathOperator{\Hom}{Hom}
\DeclareMathOperator{\diag}{diag}
\DeclareMathOperator{\id}{id}
\DeclareMathOperator{\ch}{char}
\DeclareMathOperator{\SL}{SL}
\DeclareMathOperator{\Sp}{Sp}
\DeclareMathOperator{\G}{G}
\DeclareMathOperator{\C}{C}
\DeclareMathOperator{\D}{D}
\DeclareMathOperator{\F}{F}
\DeclareMathOperator{\E}{E}
\DeclareMathOperator{\Spin}{Spin}
\DeclareMathOperator{\Mat}{Mat}
\DeclareMathOperator{\Tr}{Tr}
\DeclareMathOperator{\N}{N}
\DeclareMathOperator{\tr}{tr}
\DeclareMathOperator{\Sr}{Sr}
\DeclareMathOperator{\A}{A}
\DeclareMathOperator{\Lm}{L}
\DeclareMathOperator{\Lin}{Lin}
\DeclareMathOperator{\U}{U}
\DeclareMathOperator{\q}{q}
\DeclareMathOperator{\n}{n}
\DeclareMathOperator{\Her}{Her}
\newtheorem{thm}[subsection]{Theorem}
\newtheorem{prop}[subsection]{Proposition}
\newtheorem{lem}[subsection]{Lemma}
\newtheorem{cor}[subsection]{Corollary}
\newcommand{\I}{\mathcal{I}}
\title{Isomorphism classes of $k$-involutions of algebraic groups of type $\E_6$ \thanks{SAU research fund} }
\author{John Hutchens} 
\begin{document}

\maketitle

\begin{abstract}
Automorphisms of order $2$ are studied in order to understand generalized symmetric spaces.  The groups of type $\E_6$ we consider here can be realized as both the group of linear maps that leave a certain determinant invariant, and also as the identity component of the automorphism group of a class of structurable algebras known as Brown algebras.  We will classify the $k$-involutions of these groups of type $\E_6$ using aspects of both descriptions.
\end{abstract}

\section{Introduction}

Symmetric spaces of algebraic groups were first studied by Gantmacher in \citep{ga39} to classify real Lie groups.  The symmetric spaces of real and complex Lie algebras were classified by Berger in \citep{be57}.  This classification was later extended to exceptional Lie groups explicitly by Yokota in \citep{yo90}.  A similar classification of groups of centralizers of elements of order $2$ for finite groups of Lie type has been carried out in \citep{as76} and \citep{gls98}.

The current paper is a continuation of a series of papers addressing the classification of symmetric $k$-varieties of algebraic groups \citep{he00,he02}, and further a continuation within that series that focuses on exceptional algebraic groups \citep{hu12,hu14}.  We will consider an algebraic group $G$ defined over a field $k$, and an automorphism of order $2$, $\theta: G \to G$ such that $\theta\big(G(k)\big) = G(k)$ where $G(k)$ are the $k$-rational points of $G$.  Our ultimate goal is to classify spaces of the form $G(k)/H(k)$, where $H$ is the fixed point group of a $k$-involution of $G$ and $H(k)$ its $k$-rational points.

To this end we refer to \citep{he00} and A.G. Helminck's correspondence between the classification of such spaces and the following invariants

\begin{enumerate}[(1)]
\item classification of admissible involutions of $(X^*(T),X^*(S), \Phi(T), \Phi(S))$, where $T$ is a maximal torus in $G$, $S$ is a maximal $k$-split torus contained in $T$
\item classification of the $G(k)$-isomorphism classes of $k$-inner elements $a\in I_k(S_{\theta}^-)$
\item classification of the $G(k)$-isomorphism classes of $k$-involutions of the $k$-anisotropic kernel of $G$,
\end{enumerate} 

Our classification relies on the structure of Albert algebras usually denoted by $J$, and of another type of structurable algebra call a Brown algebra denoted by $B$.  The main result found in Theorem \ref{E6kinv} is a method of determining isomorphism classes of automorphisms of order $2$ of the groups $\Inv(J)$ or $\Aut^+(B)$ when $J$ is split, both of which are of type $\E_6$.  We show in Lemma \ref{fixJ} that most of the variation of the structure of the isomorphism classes of these automorphisms corresponds to the variation in isomorphism classes of certain quaternion algebras.  We also give explicit representatives of isomorphism classes of these automorphism of order $2$ in Theorem \ref{E6kinv2} for specific fields including algebraically closed fields, real numbers, finite fields, and $\mathfrak{p}$-adic fields.

In the following section we define terms specific to the definitions of the first two invariants.  Because we are only dealing with split cases of algebraic groups of type $\E_6$ we do not need to discuss the invariant of type (3).

\section{Preliminaries}

In the current paper we consider only split groups of type $\E_6$ over fields of characteristic not $2$ or $3$.  For characteristic $2$ and $3$ we refer the reader to \citep{as76,gls98}.  All of these groups can be realized as the isometry group of an Albert algebra, this is well known and can be found in \citep{sv00,fre59}.  In \citep{ga01} Garibaldi shows that groups of type $\E_6$ can also be realized as automorphism groups of a Brown algebra.  We always think of groups of type $\G_2$ and $\F_4$ as the automorphism groups of octonion algebras and Albert algebras respectively.

In this section we define a Brown algebra, and illustrate some characteristics of other algebras we utilize.  We denote the inner automorphisms induced by $g$ with $\mathcal{I}_g(x) = gxg^{-1}$, $g \in G$.  The group of characters and the root space associated to a torus $T$ will be denoted $X^*(T)$ and $\Phi(T)$ respectively.

When we say an algebraic group is \emph{$k$-split} we mean that it contains a maximal torus that is isomorphic to a product of multiplicative groups $G_m(k) \times G_m(k) \times \cdots \times G_m(k)$.  We call a torus, $S$, \emph{$\theta$-split} if $\theta(s)=s^{-1}$ for all $s\in S$, and we call a torus $(\theta,k)$-split if it is both $\theta$-split and $k$-split.  We denote the maximal $\theta$-split subtorus of $S$ by
	\[ S_{\theta}^- = \{ s \in S \ | \ \theta(s)=s^{-1} \}. \]
If $T$ is a maximal $k$-torus containing the subtorus $S$ we call
	\[ \theta \in \Aut(X^*(T),X^*(S),\Phi(T), \Phi(S) ), \]
an \emph{admissible involution} if there is an involution $\tilde{\theta} \in \Aut(G,T,S)$ such that the restriction of $\tilde{\theta}$ to $X^*(T)$ is $\theta$, $S_{\tilde{\theta}}^-$ is a maximal $(\tilde{\theta},k)$-split torus and $T_{\tilde{\theta}}^-$ is a maximal $\tilde{\theta}$-split torus.  An element $a \in G$ is a \emph{$k$-inner element} with respect to a $k$-involution $\theta$ if $a \in S_{\theta}^-$ and $\theta \mathcal{I}_a$ is a $k$-involution of $G$.

The groups we are interested in all depend, in one way or another, on a class of algebras known as composition algebras.  We say a quadratic form $\q: V \to k$ is \emph{nondegenerate} if its associate bilinear form
\[ \langle x, y \rangle = \q(x+y) - \q(x) - \q(y), \]
is nondegenerate, i.e. $V^{\perp} = \{0\}$. A \emph{composition algebra} is an algebra $C$ over a field $k$ equipped with a product and a nondegenerate quadratic form, $\q: C \to k$, such that $\q(xy) = \q(x)\q(y)$ for $x,y \in C$.  It was shown by Hurwitz \citep{hu22} that these can only occur in dimensions $1, 2, 4$, or $8$.  A four dimensional composition algebra is called a \emph{quaternion algebra}, and an eight dimensional composition algebra is called an \emph{octonion algebra}.  Composition algebras are equipped with an algebra involution denoted by $\bar{ \ }: C \to C$.  We define an \emph{algebra involution} to be an anti-homomorphism from the algebra to itself. 

A split quaternion algebra $D$ over $k$ is isomorphic to the algebra of $2 \times 2$ matrices over $k$.  The product is typical matrix multiplication and $\q_D$ is the typical $2 \times 2$ matrix determinant.  The algebra involution on the quaternion algebra is defined by
\[ \overline{\begin{bmatrix}
	\alpha_{11} & \alpha_{12} \\
	\alpha_{21} &  \alpha_{22}
\end{bmatrix}} = \begin{bmatrix}
	\ \alpha_{22} & -\alpha_{12} \\
	-\alpha_{21} & \ \alpha_{11}
\end{bmatrix}. \]  

We can construct a split octonion algebra from a split quaternion algebra through a doubling process referred to as the Cayley-Dickson process.  This octonion algebra is an ordered pair of $2 \times 2$ matrices over $k$.  The new product is given by
\[ (a_1,a_2)(b_1,b_2) = \left(a_1b_1 + \kappa\overline{b}_2 a_2, b_2a_1 + a_2 \overline{b}_1 \right), \]
where $\kappa \in G_m(k)$.  The new norm is given by $\q\big( (a_1,a_2) \big) = \q_D(a_1) -\kappa \q_D(a_2)$.

Our groups of type $\F_4$ are defined in terms of a class of Jordan algebras.  We let $\gamma = \diag(\gamma_1,\gamma_2,\gamma_3)$ where $\gamma_i \in G_m(k)$ for $1\leq i \leq 3$.  Let $\Her_3(C,\gamma)$ denote the set of $3 \times 3$ matrices over $C$ that are $\gamma$-Hermitian.  If $\mathcal{A}\otimes_k k_{sep} \cong \Her_3(C_{k_{sep}}, \gamma)$ where $k_{sep}$ is a separable closure of $k$, we call $\mathcal{A}$ an \emph{Albert algebra}.   

We can define a quadratic form $Q: J \to J$ on an Albert algebra for $j \in J$ by
\[ Q(j) = \frac{1}{2}\Tr(j^2), \]
where $\Tr$ is the trace form on $J$.  Let $u \in \Her_3(C,\gamma)$ such that $u^2 = u$, and $Q(u) = \frac{1}{2}$, then $u$ is called a \emph{primitive idempotent}.  The terminology is motivated by the fact that $Q(u)=\frac{1}{2}$ if and only if $u$ cannot be written as a linear combination of other idempotent elements.  We denote by $E_0$ the zero-space of multiplication by $u$ in $\Her_3(C,\gamma)$, that is orthogonal to the identity element $e$.  We are also interested in some quadratic Jordan algebras corresponding to a fixed primitive idempotent.  When $C$ is an octonion algebra we will denote the $11$ dimensional quadratic Jordan algebra by
\[ \beth = ku \oplus k(e-u) \oplus E_0, \]
where $e$ is the identity element in $J$.  For a thorough introduction to Jordan algebras see \citep{mcc05} or \citep{ja58}.

There is a cubic norm that acts as the determinant on $\Her_3(C,\gamma)$.  We will denote this norm by $\N$, and give a detailed definition later.  When $C$ is a split octonion algebra there is only one isomorphism class of $\beth$.  When $C$ is an octonion algebra and $J \cong \Her_3(C,\gamma)$ we call $J$ a \emph{reduced} Albert algebra, or equivalently if $J$ contains a nonzero idempotent.  So, a typical element of a reduced Albert algebra is of the form

\[ \begin{bmatrix}
\xi_1 & c & \gamma_1^{-1}\gamma_2 \overline{b} \\
\gamma_2^{-1}\gamma_1 \overline{c} & \xi_2 & a \\
b & \gamma_3^{-1}\gamma_2\overline{a} & \xi_3
\end{bmatrix}, \]
where $\xi_i \in k$, $\gamma_i \in G_m(k)$ and $a,b,c \in C$.  A dot will denote the Jordan product,
	\[ x\cdot y = \frac{1}{2}(x y + y x), \]
where juxtaposition denotes the typical matrix product.  The following result is well known and can be found in \citep{ja61,sv00} with the added hypothesis that the associated composition algebras are isomorphic.  This additional hypothesis is proved unnecessary in \citep{ga03}.  We will use the term \emph{$\mathcal{J}$-algebra} to refer to the family of Jordan algebras needed in the classification of $k$-involutions of $\Aut(J)$.  These include cubic Jordan algebras over composition algebras and the quadratic algebras of the same form as $\beth$ described above.

\begin{thm}
Two reduced $\mathcal{J}$-algebras $J$ and $J'$ isomorphic if and only if their quadratic forms are equivalent.
\end{thm}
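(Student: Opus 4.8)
The plan is to prove the two implications separately: the forward direction is essentially formal, while the converse is where all the real content lies and will rest on the coordinatization theory for reduced Jordan algebras together with the special arithmetic of Pfister forms.

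For the forward implication I would argue that the trace form is an \emph{intrinsic} invariant of the Jordan structure: both the cubic norm $\N$ and its polarizations, and hence $\Tr$ and the quadratic form $Q(j) = \tfrac12\Tr(j^2)$, are determined by the generic minimal polynomial and are therefore preserved by any algebra isomorphism $\phi\colon J \to J'$. Thus $Q'(\phi(j)) = Q(j)$ for all $j$, so $\phi$ is an isometry and the quadratic forms are equivalent. This requires no case analysis and applies uniformly to the cubic and to the quadratic $\beth$-type members of the family.

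For the converse I would first compute $Q$ explicitly on a reduced cubic algebra $\Her_3(C,\gamma)$. Writing a general element as in the displayed matrix and squaring, a direct calculation should give a decomposition of the shape
\[ Q \cong \langle 1,1,1\rangle \perp \langle \gamma_1^{-1}\gamma_2,\ \gamma_2^{-1}\gamma_3,\ \gamma_1^{-1}\gamma_3\rangle \otimes \q_C, \]
where $\q_C$ is the norm form of the coordinate composition algebra $C$, an $n$-fold Pfister form (with $n=3$ in the octonion case). For the $\beth$-algebras the same kind of computation presents $Q$ as a split binary part coming from $ku \oplus k(e-u)$ together with the restriction of $\q_C$ to $E_0$. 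The point of isolating this shape is that $C$ and $\gamma$ can each be read back off of $Q$. Because Pfister forms are round and are classified up to isometry by the underlying (octonion or quaternion) algebra, the similarity class of the $\q_C$-divisible part of $Q$ recovers $\q_C$, and hence $C$, up to the hyperbolic summands forced by the $\langle 1,1,1\rangle$ term; so $Q\cong Q'$ forces $C\cong C'$. With $C$ fixed, Witt cancellation applied to $Q\cong Q'$ equates the forms $\langle \gamma_1^{-1}\gamma_2,\dots\rangle\otimes\q_C$, and roundness of $\q_C$ then pins down the ratios $\gamma_i^{-1}\gamma_j$ modulo the group of norms $\q_C(C^{\times})$ and up to permutation.

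Finally I would feed this data into the coordinatization theorem for reduced cubic Jordan algebras (Jacobson \citep{ja61,sv00}, with the superfluous hypothesis removed in \citep{ga03}), by which $\Her_3(C,\gamma)\cong\Her_3(C',\gamma')$ exactly when $C\cong C'$ and $\gamma,\gamma'$ agree up to permutation and scaling by norms from $C$---which is precisely the information extracted above. The $\beth$-case instead invokes the classical fact that two Jordan algebras of a quadratic form are isomorphic if and only if the defining forms are similar, again visible in $Q$. I expect the main obstacle to be exactly the separation carried out in the previous paragraph: disentangling the single large form $Q$ into its Pfister factor $\q_C$ and its diagonal coordinate part $\langle\gamma_i^{-1}\gamma_j\rangle$. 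This disentanglement has no analogue for general quadratic forms and succeeds only because of roundness and the isometry-classification of Pfister forms; the delicate bookkeeping will be tracking the factors of $2$ and the permutation ambiguity in characteristic $\neq 2$.
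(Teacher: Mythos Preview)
The paper does not prove this theorem at all: it is quoted as a known result, with the sentence immediately preceding it attributing the statement to \citep{ja61,sv00} (under the extra hypothesis that the coordinate composition algebras coincide) and to \citep{ga03} for removing that hypothesis. So there is no ``paper's own proof'' to compare against; your proposal already goes further than the text does.

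That said, your outline matches the standard argument in those references. The forward direction is exactly as you say. For the converse, the explicit shape of $Q$ on $\Her_3(C,\gamma)$ you write down is the key computation in Springer--Veldkamp, and the mechanism by which $Q\cong Q'$ forces $C\cong C'$ is precisely the Pfister-form argument (roundness plus Witt cancellation) that underlies the removal of the composition-algebra hypothesis in \citep{ga03}. One caution: be careful not to phrase the final step circularly. The coordinatization input you want is the constructive statement that, once $C\cong C'$ is fixed, adjusting $\gamma$ by permutations and by norms from $C$ can be realized by explicit Jordan isomorphisms of $\Her_3(C,\gamma)$; this is elementary and is what the references actually prove, and it is logically prior to the quadratic-form formulation you are establishing.
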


A linear map of $k$-algebras $\phi: A \to A'$  is called a \emph{norm similarity} if $\N'(\phi(a)) = \lambda\N(a)$ with $\lambda \in G_m(k)$.  The value $\lambda$ is called the \emph{multiplier} of $\phi$.  If $\lambda = 1$ we say that $\phi$ is a \emph{norm isometry}.

We define a triple product on an Albert algebra $J$ by
\[ \{x,z,y\} = (x\cdot z)\cdot y + (y\cdot z)\cdot x - (x\cdot y)\cdot z. \]

We also think of these algebras in terms of the first Tits construction, and we review the necessary details.  To this end we define a sharped cubic form to be the triple $(\N,\#,1_X)$, where $X$ is a $k$-algebra, $\N$ is a norm form on $X$, and $1_X$ is the base point.  We define $\N(1_X) = 1$.  The sharp map $\#: X \to X$ is given by

\[ x^{\#} = x^2 - \Tr(x)x + \Sr(x)1_X, \]

and we define,

\[ x \# y = (x+y)^{\#} - x^{\#} - y^{\#}. \]

The forms $\Tr$ and $\Sr$ are referred to as the (linear) trace and quadratic trace respectively.  If the following identities hold

\begin{align}
\Tr(x^{\#}, y) &= \N(x,y) \\
\big(x^{\#}\big)^{\#} &= \N(x)x \label{scf2} \\
1_X \# x &= \Tr(x)1_X - x,
\end{align}
we call $(\N,\#,1_X)$ a \emph{sharped cubic form} on $X$.  We define a nondegenerate cubic form with base point to be a \emph{Jordan cubic form} if it satifies (\ref{scf2}).  It is well known that every Jordan cubic form is a sharped cubic form, see \citep{mcc05}.

From a sharped cubic form $(\N, \#, 1_X)$ we can construct a unital Jordan algebra, $J(\N, \#, 1_X)$, which has unit element $1_X$, and a $\U$-operator defined by
	\[ \U_xy = \Tr(x,y)x - x^{\#}\# y. \]
In our case we can use the following identity
\[ \U_x = 2\Lm_x^2 - \Lm_{x^2}. \]

	\begin{prop}
	Any sharped cubic form $(\N,\#,1_X)$ gives a unital Jordan algebra $J(\N,\#,1_X)$ with unit $1_X$ and product
\begin{equation}
\label{jprod}
x\cdot y = \frac{1}{2} ( x \# y + \Tr(x)y + \Tr(y)x - \Sr(x,y)1_X ). 
\end{equation}
	\end{prop}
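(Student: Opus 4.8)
The plan is to recognize the product (\ref{jprod}) as the linearization of the quadratic $\U$-operator $\U_x y = \Tr(x,y)x - x^{\#}\# y$, and to obtain the Jordan axioms by invoking McCrimmon's theorem \citep{mcc05} that a sharped cubic form produces a quadratic Jordan algebra, transporting this to the linear product since $\ch k \neq 2$. The two halves of the argument are thus (i) bookkeeping at the base point to identify $\frac{1}{2}\U_{x,y}1_X$ with the stated product, and (ii) the structural verification of the $\U$-axioms, which is where the real work sits.

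First I would pin down the normalizations forced at $1_X$ by the three defining identities together with $\N(1_X)=1$. Euler's relation for the homogeneous degree-$3$ form $\N$ gives $\Tr(1_X)=3$; setting $y=1_X$ in $\Tr(x^{\#},y)=\N(x,y)$ and using nondegeneracy of the trace pairing gives $1_X^{\#}=1_X$, whence $\Sr(1_X)=3$ by evaluating $x^{\#}=x^2-\Tr(x)x+\Sr(x)1_X$ at $1_X$. The crucial auxiliary identity is $\Tr(x^{\#})=\Sr(x)$ (the trace of the adjoint is the quadratic trace), which polarizes to $\Tr(x\# y)=\Sr(x,y)$; combined with $1_X\# w=\Tr(w)1_X-w$ this yields $\Sr(1_X,x)=\Tr\big(\Tr(x)1_X-x\big)=2\Tr(x)$, and hence $\Tr(x,1_X)=\Tr(x)$ for the trace bilinear form.

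Next I would record the computations identifying the product. Applying $1_X\# w=\Tr(w)1_X-w$ to $w=x^{\#}$ and using $\Tr(x,1_X)=\Tr(x)$ gives $\U_x 1_X=\Tr(x)x-\Tr(x^{\#})1_X+x^{\#}=x^2$, so $1_X$ is the unit of the quadratic structure and $\U_x 1_X$ is the square. Polarizing the subscript yields $\U_{x,y}1_X=\Tr(x)y+\Tr(y)x-(x\# y)\# 1_X$, and substituting $(x\# y)\# 1_X=\Tr(x\# y)1_X-(x\# y)=\Sr(x,y)1_X-(x\# y)$ gives $\frac{1}{2}\U_{x,y}1_X=\frac{1}{2}\big(x\# y+\Tr(x)y+\Tr(y)x-\Sr(x,y)1_X\big)$, which is exactly (\ref{jprod}). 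Commutativity is immediate from the manifest symmetry of the right-hand side, the unit law $1_X\cdot x=x$ follows by substituting $1_X\# x=\Tr(x)1_X-x$ together with $\Tr(1_X)=3$ and $\Sr(1_X,x)=2\Tr(x)$, and setting $x=y$ recovers $x\cdot x=x^{\#}+\Tr(x)x-\Sr(x)1_X$, confirming that the $x^2$ appearing in the definition of $\#$ is indeed this Jordan square.

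The main obstacle is step (ii): showing that $\U$ satisfies the quadratic Jordan axioms — the fundamental formula $\U_{\U_x y}=\U_x\U_y\U_x$ together with $\U_{1_X}=\id$ and its linearizations — purely from the sharped-cubic-form identities. This is precisely McCrimmon's adjoint-identity computation, so I would cite \citep{mcc05} for it rather than reproduce the degree-by-degree identities for $\#$ and $\N$. The operator identity $\U_x=2\Lm_x^2-\Lm_{x^2}$ is what transports the verified quadratic axioms into the linear statement, namely that (\ref{jprod}) is commutative and satisfies the Jordan identity $[\Lm_x,\Lm_{x^2}]=0$, yielding the unital linear Jordan algebra $J(\N,\#,1_X)$.
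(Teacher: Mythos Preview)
The paper does not supply a proof of this proposition; it is stated as a known fact, with the surrounding text pointing to \citep{mcc05} and displaying the $\U$-operator $\U_xy=\Tr(x,y)x-x^{\#}\# y$ together with $\U_x=2\Lm_x^2-\Lm_{x^2}$ as the intended bridge. Your proposal is correct and is precisely the standard derivation the paper is gesturing at: compute $\U_x1_X=x^2$ from the base-point identities, polarize to get $\tfrac{1}{2}\U_{x,y}1_X$ equal to the right-hand side of (\ref{jprod}), and then cite McCrimmon for the verification that $\U$ satisfies the quadratic Jordan axioms, which in characteristic $\neq 2$ transports to the linear Jordan identity via $\U_x=2\Lm_x^2-\Lm_{x^2}$. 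There is nothing to compare against, and your sketch in fact supplies more detail than the paper does.
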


Let $X$ be a degree $3$ associative algebra defined over $k$ that is equipped with a cubic norm form $\n$, trace form $\tr$, and $\varsigma \in G_m(k)$.  We define a $k$-module $J = X_0 \oplus X_1 \oplus X_2$ to be three copies of our associative algebra $X$ and make the following identifications for $a = (a_0,a_1,a_2), b=(b_0,b_1,b_2) \in J$,

\begin{align}
1_J &= (1_X, 0, 0) \\
\N\left( a \right) &= \n(a_0) + \varsigma \n(a_1) + \varsigma^{-1} \n(a_2) - \tr(a_0a_1a_2) \\
\Tr\left( a \right) &= \tr(a_0) \\
\Tr\left( a, b \right) &= \tr(a_0,b_0) + \tr(a_1,b_2) + \tr(a_2,b_1) \\
a^{\#} &= \left( a_0^{\#} -  a_1a_2, \varsigma^{-1} a_2^{\#} - a_0a_1, \varsigma a_1^{\#} - a_2a_0 \right).
\end{align}

This algebra with the product from (\ref{jprod}) gives rise to a unital Jordan algebra $J(X, \mu)$, and is called the \emph{first Tits construction}.  The algebra $J(\Mat_3(k), 1)$ is a split Albert algebra over $k$, and over a given field $k$ this is a representative of the only isomorphism class of split Albert algebras.

Two Jordan algebras $J$ and $J'$ are \emph{isotopic} if $J' \cong J^{\langle u \rangle}$, where $J^{\langle u \rangle}$ is the Jordan algebra with the same elements as $J$ and with the product
\[ x\langle u \rangle y = (x\cdot u)\cdot y + (y\cdot u)\cdot x - (x\cdot y)\cdot u = \{x,u,y\}. \]  
The $u$-isotope $J^{\langle u \rangle}$ of $J$ has the same underlying vector space as $J$ with a shifted multiplication thus shifting the identity and inverses by,
\begin{align}
e^{\langle u \rangle} &= u^{-1} \\
x^{\langle u, -1 \rangle} &= \U_u^{-1} x^{-1}.
\end{align}
The following fact taken directly from \citep{ga01} will be useful in narrowing down possible isomorphism classes of $k$-involutions.

\begin{lem}
\label{classalb}
If $J$ is a reduced Albert algebra, then every element $\lambda \in G_m(k)$ has a corresponding norm similarity $\phi_{\lambda}:J \to J$ such that $\N(\phi_{\lambda}(j)) = \lambda \N(j)$. 
\end{lem}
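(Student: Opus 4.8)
The plan is to use that a reduced Albert algebra is, by definition, of the form $J\cong\Her_3(C,\gamma)$ for an octonion algebra $C$, and to exhibit \emph{explicit} norm similarities whose multipliers exhaust $G_m(k)$. Since composing norm similarities multiplies their multipliers and inverting one inverts its multiplier, the set of attainable multipliers is a subgroup of $G_m(k)$; so it suffices to produce two families of norm similarities whose multipliers together generate $G_m(k)$. I would use cubes and squares, which suffices because $\gcd(2,3)=1$.

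First I would record the cubes. As $\N$ is a cubic (degree-$3$ homogeneous) form, scalar multiplication $\sigma_\mu=\mu\cdot\id$ gives $\N(\sigma_\mu(j))=\mu^3\N(j)$, so $\sigma_\mu$ is a norm similarity with multiplier $\mu^3$, realizing every cube. Next I would produce squares by conjugating with a scalar diagonal matrix: for $D=\diag(d_1,d_2,d_3)$ with $d_i\in G_m(k)$, the map $\delta_D(H)=DHD$ carries $\Her_3(C,\gamma)$ into itself, scaling each diagonal entry $\xi_i$ by $d_i^2$ and each off-diagonal entry in position $(i,j)$ by $d_id_j$. Substituting into the explicit cubic norm of $\Her_3(C,\gamma)$ shows that every monomial of $\N$ acquires the common factor $(d_1d_2d_3)^2$, so $\N(\delta_D(H))=(d_1d_2d_3)^2\,\N(H)$ and $\delta_D$ is a norm similarity whose multiplier is the square $(d_1d_2d_3)^2$.

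To finish, given $\lambda\in G_m(k)$ I would write $\lambda=\lambda^3\cdot\lambda^{-2}$, take $D=\diag(\lambda^{-1},1,1)$ so that $\delta_D$ has multiplier $\lambda^{-2}$, and set $\phi_\lambda=\sigma_\lambda\circ\delta_D$. Then $\N(\phi_\lambda(j))=\lambda^3\cdot\lambda^{-2}\,\N(j)=\lambda\,\N(j)$, which is exactly the required norm similarity with multiplier $\lambda$. A purely Jordan-theoretic variant replaces $\delta_D$ by the $\U$-operator: using the standard identity $\N(\U_u x)=\N(u)^2\N(x)$ together with the universality of $\N$ on a reduced algebra (the element $\diag(s,1,1)$ has $\N=s$, so $\N$ represents every $s\in G_m(k)$), one chooses $u$ with $\N(u)=\lambda^{-1}$ and sets $\phi_\lambda=\sigma_\lambda\circ\U_u$.

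The only genuine content is producing multipliers that are \emph{not} cubes, since scalar scaling alone yields only cubes; the square-producing family resolves this, after which coprimality of $2$ and $3$ closes the argument. The one computation to carry out with care is verifying that $\delta_D$ (equivalently the identity $\N(\U_u x)=\N(u)^2\N(x)$) really rescales $\N$ by the claimed factor, and that $\delta_D$ indeed preserves $\Her_3(C,\gamma)$; both are direct checks on the explicit form of the cubic norm and present no structural obstacle.
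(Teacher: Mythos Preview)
Your argument is correct. The paper's proof is considerably more terse: it simply writes down a single explicit map
\[
\phi_\lambda\bigl(h(\xi_1,\xi_2,\xi_3;a,b,c)\bigr)=h(\lambda\xi_1,\lambda\xi_2,\lambda^{-1}\xi_3;a,b,\lambda c)
\]
and leaves the verification that $\N\bigl(\phi_\lambda(j)\bigr)=\lambda\N(j)$ to the reader. Your cube/square decomposition is a more conceptual route to the same destination: indeed, if in your construction you take $D=\diag(1,1,\lambda^{-1})$ instead of $\diag(\lambda^{-1},1,1)$, then $\sigma_\lambda\circ\delta_D$ is \emph{exactly} the paper's $\phi_\lambda$. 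So the two proofs produce the same map, but the paper presents it as a bare formula while you explain why such a map should exist (multipliers form a group, scalars give cubes, congruence by diagonals gives squares, $\gcd(2,3)=1$). Your framing has the advantage of making the existence transparent and of generalizing cleanly via the $\U$-operator variant you mention; the paper's has the advantage of brevity.
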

\begin{proof}
If we let
\[ h(\xi_1,\xi_2,\xi_3;a, b, c) = \begin{bmatrix}
\xi_1 & c & \gamma_1^{-1}\gamma_2 \overline{b} \\
\gamma_2^{-1}\gamma_1 \overline{c} & \xi_2 & a \\
b & \gamma_3^{-1}\gamma_2\overline{a} & \xi_3
\end{bmatrix}, \]
then $\phi_{\lambda}\big( h(\xi_1,\xi_2,\xi_3;a, b, c) \big) = h(\lambda \xi_1, \lambda \xi_2, \lambda^{-1} \xi_3; a, b, \lambda c)$.
\end{proof}

To study groups of type $\E_6$ we define the $k$-algebra $\mathcal{B}(J \times J,k \times k,\zeta) = \mathcal{B}(J^{\times 2},k^{\times 2},\zeta)$, where $J$ is a reduced $\mathcal{J}$-algebra over $k$ and $\zeta \in G_m(k)$.  This algebra takes the form
\[ \mathcal{B}(J^{\times 2},k^{\times 2},\zeta) = \left\{ \begin{bmatrix}
\alpha & j \\
l & \beta  \\ 
\end{bmatrix} \ \bigg| \ \alpha,\beta \in k \text{ and } j,l \in J \right\}, \]
with the following product
\[\begin{bmatrix}
\alpha_1 & j_1 \\
l_1 & \beta_1  \\ 
\end{bmatrix}
\begin{bmatrix}
\alpha_2 & j_2 \\
l_2 & \beta_2  \\ 
\end{bmatrix} = 
\begin{bmatrix}
\alpha_1\alpha_2  + \zeta \Tr(j_1,l_2) & \alpha_1 j_2 + \beta_2 j_1 + \zeta (l_1 \# l_2) \\
\beta_1 l_2 + \alpha_2  l_1 + (j_1 \# j_2) & \beta_1\beta_2 + \zeta \Tr(j_2,l_1)  \\ 
\end{bmatrix}. \]
We can define an algebra involution on our new algebra $(B,-)$ by, 
\[ \overline{\begin{bmatrix}
\alpha & j \\
l & \beta  \\ 
\end{bmatrix}} =
\begin{bmatrix}
\beta & j \\
l & \alpha  \\ 
\end{bmatrix}. \]
When $J$ is a split Albert algebra over a field $k$, $\mathcal{B}\left(J^{\times 2}, k^{\times 2}, \zeta \right)$ is a split Brown algebra over $k$, and will be denote by $B^d$.  If $(B,-)$ is a $k$-algebra with involution we say $(B,-)$ is a \emph{Brown algebra} if $(B,-) \otimes_k k_{sep} \cong B^d \otimes_k k_{sep}$ for $k_{sep}$, a separable closure of $k$.  A Brown algebra is called \emph{reduced} if the corresponding Albert algebra is reduced.

As pointed out in \citep{ga01} Brown began studying these algebras in greater generality than we require, using parameters he calls $\mu, \nu, \omega_1,\omega_2, \delta_1,\delta_2$ in \citep{br63}.  We are only interested in the case $\mu = \nu = \omega_1 = 1$ and $\omega_2 = \delta_1 = \delta_2 = \zeta=1$.  Every Brown algebra has a one dimensional space of elements that are skew symmetric with respect to $-:B \to B$.  If this space is $ks_0$ and $s_0^2 \in G_m(k)$ is a square in $k$ we say that $(B,-)$ is of \emph{type $1$}, and $(B,-)$ is of \emph{type $2$} otherwise.  Garibaldi goes on to show us the following result in \citep{ga01}, where $\Aut^+(B,-)$ is the identity component of $\Aut(B)$.  

\begin{thm}
If $(B, -)$ is a Brown algebra of type $m$ over $k$, then $\Aut^+(B,-)$ is a simply connected algebraic group of type $^m\hspace{-.07cm}E_6$ over $k$ with trivial Tits algebra.  Every simply connected group of type $E_6$ with trivial Tits algebra is isomorphic to a group of this form. 
\end{thm}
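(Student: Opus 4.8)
The plan is to follow Garibaldi's argument in \citep{ga01}, reducing the statement to two well-understood facts: that the cubic-norm isometry group of a split Albert algebra is the split simply connected group of type $E_6$, and that the Galois descent of the pair $(B,-)$ is governed entirely by the one-dimensional skew space $ks_0$. First I would pass to a separable closure. By the definition of a Brown algebra, $(B,-)\otimes_k k_{sep}\cong B^d\otimes_k k_{sep}$, so it suffices to identify $\Aut^+(B^d\otimes_k k_{sep},-)$ with the split simply connected $E_6$ and then track the twisting data. On the split algebra, automorphisms commuting with $-$ that lie in the identity component fix the two diagonal idempotents (an automorphism swapping them would induce the diagram automorphism and hence lie outside $\Aut^+$), and therefore restrict to linear maps of the Albert algebra $J$ sitting in the off-diagonal blocks; chasing the product shows these restrictions preserve $\#$ and hence the cubic norm, so they lie in $\Inv(J)$. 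Since $\Inv(J)$ for split $J$ is the split simply connected group of type $E_6$ (classical; \citep{sv00,fre59}), this gives the identity component on the split side, and a computation of its Lie algebra (of dimension $78$) together with its root datum confirms the type.

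Next I would establish simple connectedness and triviality of the Tits algebras together, since both flow from the rational $27$-dimensional representation. The Albert algebra $J$ is defined over $k$ and furnishes a minuscule representation of $\Aut^+(B,-)$ realized over $k$; because one of the two $27$-dimensional minuscule representations is thereby rational, the corresponding Tits algebra is split, and by the $\mathbb{Z}/2$ symmetry exchanging the two minuscule weights the other is split as well. The same action on $J$, with its kernel computed to be trivial and with the central $\mu_3$ acting by the norm-similarity scalars of Lemma \ref{classalb}, identifies the group as simply connected rather than a proper quotient.

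To pin down the index $m$, I would compute the $*$-action of $\Gamma=\mathrm{Gal}(k_{sep}/k)$ on the Dynkin diagram. The diagram automorphism of $E_6$ is induced on the algebra side by the involution $-$, and the cocycle distinguishing the inner from the outer form is recorded exactly by the quadratic \'etale algebra $k[s_0]/(s_0^2-d)$ with $d=s_0^2$. When $d$ is a square (type $1$) the $*$-action is trivial and the group is $^1\hspace{-.07cm}E_6$; when $d$ is a nonsquare (type $2$) the $*$-action factors through $\mathrm{Gal}(k(\sqrt d)/k)$ and the group is $^2\hspace{-.07cm}E_6$. This is the step where the type-$m$ bookkeeping must be matched precisely to the skew element, and I expect it to be the most delicate part of the argument.

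Finally, for the converse I would invoke the cohomological classification of forms. A simply connected group $G$ of type $E_6$ with trivial Tits algebras acts rationally on a $27$-dimensional space carrying an Albert-algebra structure over $k$; feeding this $J$, together with the quadratic \'etale datum that fixes $m$, into the construction $\mathcal{B}(J^{\times 2},k^{\times 2},\zeta)$ produces a Brown algebra whose $\Aut^+$ recovers $G$. Concretely, triviality of the Tits algebra is exactly the condition placing the class of $G$ in $H^1(k,\Inv(J))$ for a $k$-rational $J$, that is, in the image of the Brown-algebra construction, so every such $G$ arises in this way. The main obstacle throughout is this translation between the vanishing of the Tits algebras and the existence of the rational Albert and Brown structures, which is precisely where the representation-theoretic input of \citep{ga01} does the real work.
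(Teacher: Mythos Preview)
The paper does not prove this theorem at all: it is quoted as a result of Garibaldi \citep{ga01} and no argument is supplied. So there is no ``paper's own proof'' to compare against; your proposal is essentially an attempt to reconstruct Garibaldi's argument, and as such it is broadly on the right track but loose in a couple of places.

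Two technical points are worth tightening. First, when you say that automorphisms in the identity component ``restrict to linear maps of $J$ \ldots preserving $\#$'', this is not quite how the computation goes: an automorphism of $(B^d,-)$ fixing the diagonal idempotents acts by $(j,l)\mapsto(\phi(j),\psi(l))$, and preserving the product forces $\psi=\phi^{\dagger}$ (via the trace pairing) together with $\phi(j)\#\phi(j')=\phi^{\dagger}(j\#j')$; this is exactly the norm-isometry condition, not preservation of $\#$. Second, your handling of the Tits algebras is too casual. In type $1$ the two $27$-dimensional representations are separately $k$-rational and you must check both; in type $2$ they are exchanged by Galois, so there is a single Tits algebra attached to a Galois orbit of weights, and your ``$\mathbb{Z}/2$ symmetry'' remark does not apply in the same way. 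Garibaldi's actual argument routes this through the structurable-algebra formalism rather than the sketch you give.

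For the converse, ``triviality of the Tits algebra places the class of $G$ in $H^1(k,\Inv(J))$ for a $k$-rational $J$'' is more a slogan than a proof: one needs to produce the Albert algebra (or rather the Brown algebra) from $G$, and this is where Garibaldi's machinery on structurable algebras and Freudenthal triple systems carries the load. Your outline identifies the right ingredients but defers the genuine content back to \citep{ga01}, which is also what the paper itself does.
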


We do not define Tits algebras here, since all groups we will consider have trivial Tits algebras.  For a thorough treatment see \citep{kmrt}.  

\begin{lem}
\label{browniso1}
Any Brown algebra of type $1$ is isomorphic to an algebra of the form $\mathcal{B}(J^{\times 2},k^{\times 2},\zeta)$.
\end{lem}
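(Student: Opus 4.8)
The plan is to exploit the type-$1$ hypothesis to produce, already over $k$, the pair of orthogonal idempotents that in the split model $B^d$ are visible only after passing to $k_{sep}$; once these idempotents are rational, the data $(J,\zeta)$ of the construction $\mathcal{B}(J^{\times 2},k^{\times 2},\zeta)$ can be read off from a Peirce-type decomposition. First I would normalize the skew element. Writing $ks_0$ for the one-dimensional space of skew elements, type $1$ means $s_0^2=\lambda^2 1_B$ for some $\lambda\in G_m(k)$; replacing $s_0$ by $\lambda^{-1}s_0$ I may assume $s_0^2=1_B$. Since $\ch k\neq 2$, the elements $e_1=\tfrac12(1_B+s_0)$ and $e_2=\tfrac12(1_B-s_0)$ are then orthogonal idempotents with $e_1+e_2=1_B$, and because $s_0$ is skew the involution interchanges them, $\overline{e_1}=e_2$. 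This is precisely the rational analogue of the idempotents $\begin{bmatrix}1&0\\0&0\end{bmatrix},\begin{bmatrix}0&0\\0&1\end{bmatrix}$ of the split model.

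Next I would build the decomposition from $s_0$. The left and right multiplication operators $L_{s_0}$ and $R_{s_0}$ satisfy $L_{s_0}^2=R_{s_0}^2=\id$ and commute; these are equalities of $k$-linear maps, so it suffices to check them after the injective base change to $k_{sep}$, where $B\otimes_k k_{sep}\cong B^d\otimes_k k_{sep}$ carries the normalized $s_0$ to $\pm$ the standard skew element (the skew space is one-dimensional and the condition $s_0^2=1_B$ is preserved), and there a direct computation in $B^d$ confirms the identities. Decomposing $B$ into the joint eigenspaces of $(L_{s_0},R_{s_0})$ gives
\[ B=B_{++}\oplus B_{+-}\oplus B_{-+}\oplus B_{--}, \]
and comparison with the action of $s_0$ on $B^d$ identifies $B_{++}=ke_1$ and $B_{--}=ke_2$ (each one-dimensional), while $B_{+-}$ and $B_{-+}$ carry the two copies of $J$. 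The action of the involution on these pieces is then forced: since $\overline{\phantom{x}}$ is an anti-homomorphism and $\overline{s_0}=-s_0$, one has $\overline{L_{s_0}x}=-R_{s_0}\overline{x}$, which immediately shows that $\overline{\phantom{x}}$ swaps $B_{++}\leftrightarrow B_{--}$ and preserves each of $B_{+-}$ and $B_{-+}$, exactly matching the involution of $\mathcal{B}(J^{\times 2},k^{\times 2},\zeta)$.

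The main work, and the main obstacle, is to recover the $\mathcal{J}$-algebra structure on the off-diagonal part and to verify that the multiplication reproduces the displayed product of $\mathcal{B}$. Setting $J:=B_{+-}$, the grading forces $B_{+-}\cdot B_{+-}\subseteq B_{-+}$ and $B_{-+}\cdot B_{-+}\subseteq B_{+-}$, and I would show that these products furnish the sharp map $\#$, while the products landing on the diagonal give a nondegenerate pairing $B_{+-}\times B_{-+}\to B_{++}\oplus B_{--}$ reproducing $\zeta\Tr$, from which both the identification $B_{-+}\cong J$ and the scalar $\zeta\in G_m(k)$ are extracted. Checking that $(J,\#)$ is a reduced $\mathcal{J}$-algebra and that every mixed product agrees with the defining formula is the delicate step. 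However, all of these identities are already known to hold over $k_{sep}$, so the essential content is rationality: that the reconstructed $J$ and $\zeta$ are defined over $k$. This is guaranteed precisely because $e_1,e_2$ (equivalently $s_0$ with $s_0^2=1_B$) are rational, which is exactly the type-$1$ condition. Assembling these identifications yields the required isomorphism $B\cong\mathcal{B}(J^{\times 2},k^{\times 2},\zeta)$.
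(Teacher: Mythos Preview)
The paper does not prove this lemma: it is stated without proof as one of ``a series of known results from \citep{ga01}''. There is therefore no argument in the paper to compare your proposal against.

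That said, your approach is the natural one and is essentially how the result is obtained in the source you would be reproducing. The key observation --- that type~$1$ lets you normalize the skew generator to $s_0^2=1_B$ and hence produce $k$-rational orthogonal idempotents $e_1,e_2$ interchanged by the involution --- is exactly the point of the type-$1$ hypothesis, and the Peirce-type decomposition via the commuting involutions $L_{s_0},R_{s_0}$ is the right mechanism for recovering the $2\times 2$ matrix form. Your descent argument (check identities of $k$-linear maps after the faithfully flat base change to $k_{sep}$, where the split model is available) is sound.

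One small caution: you write that you will check ``$(J,\#)$ is a reduced $\mathcal{J}$-algebra''. The lemma as stated does not assert, and the argument does not supply, reducedness of the resulting $J$; what you actually obtain is that $J\otimes_k k_{sep}$ is a split Albert algebra, i.e.\ $J$ is an Albert algebra over $k$, not necessarily reduced. The paper's later standing hypothesis that all Albert algebras under consideration are reduced is an additional assumption, not a consequence of the type-$1$ condition. Drop the word ``reduced'' from that step and the sketch is fine.
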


Throughout the current paper we consider only split groups of type $\E_6$, and all split groups are of type $1$.  When we think of an element of $B$ as being of the form above, then we can take a typical element of the skew symmetric space to be of the following form 
\[ \begin{bmatrix}
		\alpha & \cdot \\
		\cdot & -\alpha \\
	\end{bmatrix}
= \alpha\begin{bmatrix}
		1 & \cdot \\
		\cdot & -1 \\
	\end{bmatrix}
= \alpha s_0. \]

We define $\Inv(J)$ as the set of linear bijections that leave the cubic norm form on $J$ invariant, 
	\[ \Inv(J) = \{ \phi \in \Lin(J) \ | \ \N(\phi(j)) = \N(j) \text{ for all } j \in J \}, \]
wher $\Lin(J)$ is the set of linear maps from $J$ to itself.  An element $\phi \in \Inv(J)$ acts on $B$ in the following way,
\[ \hat{\phi} \begin{bmatrix}
			\alpha & j \\
			l & \beta
			\end{bmatrix} =
			\begin{bmatrix}
			\alpha & \phi(j) \\
			\phi^{\dagger}(l) & \beta
			\end{bmatrix}, \]
where $\dagger: \Inv(J) \to \Inv(J)$ is the unique outer automorphism such that 
\[ \phi(j)\#\phi(l) = \phi^{\dagger}(j \#l). \]
Notice that when $\phi \in \Aut(J) \subset \Inv(J)$, $\phi(j)\#\phi(l) = \phi(j \#l)$.  Now we see through a series of known results from \citep{ga01} that our Brown algebra's structure is determined greatly by its corresponding Albert algebra. 
\begin{lem}
\label{browniso2}
$\mathcal{B}(J_1^{\times 2},k^{\times 2},\zeta_1) \cong \mathcal{B}(J_2^{\times 2},k^{\times 2},\zeta_2)$ if and only if there is a norm similarty of the form $\phi_{\zeta_1/\zeta_2}$ or $\phi_{\zeta_1/\zeta_2^2}$.
\end{lem}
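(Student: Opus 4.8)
The plan is to analyze an arbitrary Brown-algebra isomorphism
$\Psi\colon \mathcal{B}(J_1^{\times 2},k^{\times 2},\zeta_1) \to \mathcal{B}(J_2^{\times 2},k^{\times 2},\zeta_2)$
through its action on the complementary idempotents $e_1 = \begin{bmatrix} 1 & \cdot \\ \cdot & 0\end{bmatrix}$ and $e_2 = \begin{bmatrix} 0 & \cdot \\ \cdot & 1\end{bmatrix}$, and then to read off the multiplier of the induced maps on the off-diagonal $J$-blocks directly from the product formula. First I would record that the skew space is the line $ks_0$ with $s_0^2 = e_1 + e_2 = 1$; since $\Psi$ respects the involution it sends $s_0 \mapsto c\,s_0$, and $c^2 = \Psi(s_0^2) = \Psi(1) = 1$ forces $c = \pm 1$. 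Combined with $\Psi(e_1+e_2)=1$, this gives exactly two possibilities: either $\Psi$ fixes $e_1,e_2$ or it interchanges them. These two cases will produce the two multipliers $\zeta_1/\zeta_2$ and $\zeta_1/\zeta_2^2$.

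Next I would pass to the decomposition of each Brown algebra determined by left and right multiplication by $e_1$: the product formula shows $B = ke_1 \oplus ke_2 \oplus J_{12}\oplus J_{21}$, where $J_{12}$ and $J_{21}$ are the ``$j$'' and ``$l$'' blocks, and where multiplication restricts to $J_{12}\times J_{12}\to J_{21}$ by $j_1\#j_2$, to $J_{21}\times J_{21}\to J_{12}$ by $\zeta(l_1\#l_2)$, and to the two mixed products contributing $\zeta\Tr(j,l)$ on the diagonal. Since $\Psi$ fixes or swaps the idempotents it preserves this grading (possibly exchanging $J_{12}$ and $J_{21}$), hence induces linear bijections $\psi_1,\psi_2\colon J_1\to J_2$ on the off-diagonal blocks. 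Comparing $\Psi(xy)=\Psi(x)\Psi(y)$ on pairs of off-diagonal elements yields, in the fixing case, $\psi_1(j_1)\#\psi_1(j_2)=\psi_2(j_1\#j_2)$ and $\zeta_2\bigl(\psi_2(l_1)\#\psi_2(l_2)\bigr)=\zeta_1\psi_1(l_1\#l_2)$.

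The key step is to feed these identities through the sharp relation $(x^{\#})^{\#}=\N(x)x$. Setting $j_1=j_2=j$ gives $\psi_1(j)^{\#}=\psi_2(j^{\#})$; applying the second identity at $l=j^{\#}$ then computes $(\psi_1(j)^{\#})^{\#}=(\zeta_1/\zeta_2)\N_1(j)\psi_1(j)$, and comparison with $(\psi_1(j)^{\#})^{\#}=\N_2(\psi_1(j))\psi_1(j)$ forces $\N_2(\psi_1(j))=(\zeta_1/\zeta_2)\N_1(j)$, so $\psi_1$ is a norm similarity of multiplier $\zeta_1/\zeta_2$. In the swapping case the reversed roles of the two blocks insert one extra factor of $\zeta_2$ into this chain, producing the multiplier $\zeta_1/\zeta_2^2$; this is exactly where the dichotomy in the statement comes from.

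For the converse I would run the construction backward: given a norm similarity $\phi$ of the relevant multiplier, put $\psi_1=\phi$ and let $\psi_2=\phi^{\dagger}$ be the associated map characterized by $\phi(x)\#\phi(y)=\phi^{\dagger}(x\#y)$, then define $\Psi$ to act by $\phi$ and $\phi^{\dagger}$ on the off-diagonal blocks (interchanging them in the $\zeta_1/\zeta_2^2$ case) and by the identity, respectively the transposition, on the diagonal. Checking that $\Psi$ is an algebra isomorphism compatible with the involution reduces to the three product identities above, which follow from $(\phi^{\dagger})^{\dagger}=\mu\phi$ together with the trace adjointness $\Tr_2(\phi(j),\phi^{\dagger}(l))=\mu\Tr_1(j,l)$. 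The main obstacle I anticipate is precisely the bookkeeping around $\phi^{\dagger}$: one must first know that this associated map is a well-defined linear norm similarity (which is where reducedness of $J_1,J_2$ and Lemma~\ref{classalb} enter) and then track the powers of $\zeta_2$ so that the diagonal trace term $\zeta\Tr(j,l)$ balances on both sides, which is what genuinely forces the two multipliers rather than making the distinction cosmetic.
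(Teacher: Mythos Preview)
The paper does not prove this lemma; it is quoted from Garibaldi \citep{ga01} with only the explanatory remark that ``the two cases correspond to $\phi$ being induced by an element of $\Inv(J)$ or not, respectively.'' Your argument supplies what the paper omits, and it is correct: the dichotomy you isolate --- whether $\Psi$ fixes or swaps the complementary idempotents $e_1,e_2$ --- is exactly the distinction that remark points to, and your multiplier computations via the sharped-cubic identity $(x^{\#})^{\#}=\N(x)x$ go through cleanly in both cases, producing $\zeta_1/\zeta_2$ and $\zeta_1/\zeta_2^2$ as claimed.

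One small correction to your discussion of the converse: the lemma as stated does not assume $J_1$ and $J_2$ are reduced, so Lemma~\ref{classalb} is not actually what you need there. The associated map $\phi^{\dagger}$ for an arbitrary norm similarity $\phi$ is well-defined directly from the nondegeneracy of the trace form (as in the paper's discussion preceding Lemma~\ref{outerdef}, extended from $\Inv(J)$ to the full structure group), and the identities $\phi(x)\#\phi(y)=\phi^{\dagger}(x\#y)$ and the trace adjointness you cite then follow by standard structure-group arguments for cubic Jordan algebras. With that in hand, your construction of $\Psi$ from $\phi$ and $\phi^{\dagger}$ on the off-diagonal blocks (swapping them in the $\zeta_1/\zeta_2^2$ case) is exactly right.
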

In other words there exists a norm similarity on $J$ that has either $\frac{\zeta_1}{\zeta_2}$ or $\frac{\zeta_1}{\zeta_2^2}$ as its multiplier.  The two cases correspond to $\phi$ being induced by an element of $\Inv(J)$ or not, respectively.
\begin{lem}
\label{browniso3}
If $J$ is reduced then $\mathcal{B}(J,k,\zeta) \cong \mathcal{B}(J,k,1)$.
\end{lem}

All Albert algebras we will consider are reduced and so we can always assume, when dealing with a Brown algebra over a split Albert algebra that $\zeta=1$.

\begin{thm}
\label{brownsplit}
$\Aut^+(B^d)$ is the split simply connected group of type $\E_6$.
\end{thm}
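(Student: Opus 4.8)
The plan is to combine the cited theorem of Garibaldi with an explicit identification of a maximal $k$-split torus. First I would check that $B^d$ is of type $1$: from the product on $\mathcal{B}(J^{\times 2},k^{\times 2},\zeta)$ one computes $s_0^2 = 1_B$, and since $1 \in G_m$ is a square, the one-dimensional skew space $ks_0$ satisfies the type $1$ condition. By the theorem of Garibaldi quoted above, $\Aut^+(B^d)$ is therefore a simply connected group of type $^1\hspace{-.07cm}E_6$ with trivial Tits algebra. This reduces the statement to showing that this group is \emph{split}, i.e.\ that it contains a maximal torus of rank $6$ isomorphic to a product of copies of $G_m$.

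For splitness I would exploit the embedding $\Inv(J) \hookrightarrow \Aut^+(B^d)$, $\phi \mapsto \hat{\phi}$, recorded above. Both $\Inv(J)$ and $\Aut^+(B^d)$ are connected of type $E_6$, hence of dimension $78$; since $\phi \mapsto \hat{\phi}$ is an injective homomorphism of algebraic groups (it respects composition because $\dagger$ is an automorphism, and its image is connected, hence lands in the identity component), the image is a closed connected subgroup of full dimension and therefore all of $\Aut^+(B^d)$. Thus $\hat{\ }$ identifies the two groups up to a central isogeny, and since splitness is preserved under central isogeny over a field of characteristic not $2$ or $3$, it suffices to prove that $\Inv(J)$ is split when $J$ is the split Albert algebra $J(\Mat_3(k),1)$.

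That $\Inv(J)$ is split for split $J$ is classical (see \citep{sv00,fre59}): the norm form $\N$ of the split Albert algebra is a split cubic form, and its isometry group is the split group of type $E_6$. Concretely I would exhibit the rank-$6$ split torus as follows. The subgroup $\Aut(J)$ is split of type $\F_4$ and contributes a split torus of rank $4$ acting on the octonion off-diagonal entries while fixing the diagonal frame; the remaining two toral directions are supplied by the diagonal scalings $h(\xi_1,\xi_2,\xi_3;a,b,c)\mapsto h(\lambda_1\xi_1,\lambda_2\xi_2,\lambda_3\xi_3;\ldots)$ with multiplier $1$, of which the norm similarities $\phi_{\lambda}$ of Lemma \ref{classalb} are a visible one-parameter piece. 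Since these scalings commute with the $\F_4$-torus and are visibly $k$-split, assembling them yields a maximal $k$-split torus of rank $6$ inside $\Inv(J)$, and hence inside $\Aut^+(B^d)$.

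The main obstacle is the middle step. One must be careful that type $^1\hspace{-.07cm}E_6$ together with trivial Tits algebra does \emph{not} by itself force the group to be split, so the explicit split torus is genuinely needed; and one must confirm that $\phi \mapsto \hat{\phi}$ really is a homomorphism into $\Aut^+(B^d)$ of the correct dimension, so that splitness transfers through the isogeny. The remaining verifications — that $s_0^2 = 1_B$, that the diagonal scalings preserve $\N$ with multiplier $1$, and that the assembled torus is maximal and $k$-split — are routine computations in the two constructions already set up above.
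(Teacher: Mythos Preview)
The paper does not supply a proof of this theorem at all: it is recorded in the preliminaries as a known result, implicitly attributed to Garibaldi \citep{ga01} along with the surrounding Lemmas \ref{browniso1}--\ref{browniso3}. So there is no ``paper's own proof'' to compare against; your proposal is an attempt to justify a statement the paper simply cites.

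Your overall strategy is sound: check type~$1$, invoke Garibaldi's theorem to get a simply connected $^1E_6$ with trivial Tits algebra, then exhibit a $k$-split maximal torus to upgrade to splitness. The weak point is the explicit torus construction. Your reference to the $\phi_\lambda$ of Lemma~\ref{classalb} is misleading: those are norm \emph{similarities} with multiplier $\lambda$, so they lie in $\Inv(J)$ only when $\lambda=1$, and they do not by themselves furnish the two extra toral directions you need beyond the rank-$4$ torus of $\Aut(J)$. The paper in fact hands you a cleaner device a few paragraphs later: the map
\[
\varphi:\SL_3\times\SL_3\times\SL_3 \hookrightarrow \Inv\bigl(J(\Mat_3(k),1)\bigr),\qquad \varphi(u,v,w)(a_0,a_1,a_2)=\bigl(ua_0v^{-1},\,va_1w^{-1},\,wa_2u^{-1}\bigr),
\]
built from the first Tits construction. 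The diagonal tori of the three $\SL_3$ factors give a $k$-split torus of rank $6$ directly, with no need to splice an $\F_4$-torus to ad hoc scalings. If you want a self-contained argument, use this in place of your third paragraph; it also makes the commutativity and maximality checks immediate. The rest of your outline (type~$1$ via $s_0^2=1_B$, and the transfer of splitness through $\phi\mapsto\hat\phi$) is fine.
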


We can construct $k$-involutions of $\Aut^+(B,-)$ from those of $\Aut(J)$ and $\Aut(C)$ where $B$, $J$ and $C$ are split Brown, Albert, and eight dimensional composition algebras respectively.  We will need the following result.  This is well known, and can be found in \citep{hu14}.

\begin{prop}
\label{g2f4}
Let $C$ be a composition algebra and $J$ a reduced Albert algebra over $C$, then $\Aut(C) \subset \Aut(J)$ is a subgroup.
\end{prop}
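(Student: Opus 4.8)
The plan is to exhibit an explicit injective homomorphism $\Aut(C) \to \Aut(J)$ realizing $\Aut(C)$ as a subgroup. Writing $J \cong \Her_3(C,\gamma)$ and borrowing the notation $h(\xi_1,\xi_2,\xi_3;a,b,c)$ from the proof of Lemma \ref{classalb}, I would associate to each $\sigma \in \Aut(C)$ the map $\tilde\sigma : J \to J$ obtained by applying $\sigma$ to the three off-diagonal entries while fixing the diagonal scalars:
\[ \tilde\sigma\big( h(\xi_1,\xi_2,\xi_3;a,b,c) \big) = h\big(\xi_1,\xi_2,\xi_3;\sigma(a),\sigma(b),\sigma(c)\big). \]
The assignment $\sigma \mapsto \tilde\sigma$ is visibly $k$-linear and compatible with composition, so once each $\tilde\sigma$ is shown to be a Jordan-algebra automorphism, the assignment is a group homomorphism; it is injective because $\tilde\sigma = \id$ forces $\sigma$ to fix every element of $C$.

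First I would record the standard fact that an automorphism of a composition algebra preserves the norm and commutes with conjugation. Applying $\sigma$ to the quadratic relation $x^2 - \langle x,1\rangle x + \q(x)1 = 0$ and using $\sigma(1)=1$ together with the linear independence of $\sigma(x)$ and $1$ for $x \notin k1$ shows $\q(\sigma(x)) = \q(x)$ and $\langle \sigma(x),1\rangle = \langle x,1\rangle$; hence $\overline{\sigma(x)} = \sigma(\bar x)$ because $\bar x = \langle x,1\rangle 1 - x$. This commutation with the bar involution is precisely what is needed to see that $\tilde\sigma$ carries $\gamma$-Hermitian matrices to $\gamma$-Hermitian matrices, using that the twisting factors $\gamma_i^{-1}\gamma_j$ lie in $k$ and are therefore fixed by $\sigma$. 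Thus $\tilde\sigma$ is a well-defined $k$-linear bijection of $J$ fixing the identity $e = \diag(1,1,1)$.

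The core step is to verify that $\tilde\sigma$ respects the Jordan product $x\cdot y = \tfrac12(xy+yx)$, which I would reduce to showing that $\tilde\sigma$ commutes with ordinary matrix multiplication of $3\times 3$ matrices over $C$. The key observation is that each entry of a matrix product, $(XY)_{ij} = \sum_l X_{il}Y_{lj}$, multiplies only two elements of $C$ at a time; since $\sigma(uv) = \sigma(u)\sigma(v)$ and the scalars are fixed, applying $\sigma$ entrywise yields $\sigma\big((XY)_{ij}\big) = (\tilde\sigma(X)\tilde\sigma(Y))_{ij}$, and symmetrizing gives $\tilde\sigma(x\cdot y) = \tilde\sigma(x)\cdot\tilde\sigma(y)$. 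I expect this to be the main, if routine, obstacle, precisely because $C$ is non-associative when it is an octonion algebra: the point to emphasize is that non-associativity never interferes, since the homomorphism property $\sigma(uv)=\sigma(u)\sigma(v)$ concerns only products of pairs, and the matrix-product formula multiplies entries in pairs.

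Combining these steps, each $\tilde\sigma$ is an automorphism of the Jordan algebra $J$, so $\sigma \mapsto \tilde\sigma$ is the desired embedding. Since it is defined by polynomial formulas in the entries, it is a morphism of algebraic groups and identifies $\Aut(C)$ with a closed subgroup of $\Aut(J)$, as claimed.
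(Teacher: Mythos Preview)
Your proposal is correct and follows essentially the same approach as the paper: both define the embedding by letting $t \in \Aut(C)$ act entrywise on the off-diagonal slots of $h(\xi_1,\xi_2,\xi_3;a,b,c)$ while fixing the diagonal scalars. The paper's proof simply writes down this map $\hat{t}$ without further justification, whereas you supply the routine verifications (norm preservation, commutation with conjugation, compatibility with the Jordan product, injectivity) that the paper leaves implicit.
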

\begin{proof}
Since $J$ is reduced $J \cong \Her_3(C,\gamma)$.  We can see this by letting $t \in \Aut(C)$ extend to the element $\hat{t} \in \Aut(J)$ such that
\[\hat{t}\left( h(\xi_1,\xi_2,\xi_3;a, b, c) \right) = h\left(\xi_1,\xi_2,\xi_3;t(a),t(b), t(c)\right). \]
\qed\end{proof}

\begin{prop}
\label{f4e6}
Let $J$ be a reduced Albert algebra over $k$ with composition algebra $C$ and $B=\mathcal{B}(J^{\times 2},k^{\times 2},\zeta)$, then $\Aut(J)$ is a subgroup of $\Aut^+(B,-)$.
\end{prop}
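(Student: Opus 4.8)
The plan is to exhibit $\Aut(J)$ inside $\Aut^+(B,-)$ via the assignment $\phi \mapsto \hat\phi$ already introduced for elements of $\Inv(J)$. The first observation is that for $\phi \in \Aut(J) \subseteq \Inv(J)$ the outer automorphism $\dagger$ acts trivially: since an algebra automorphism satisfies $\phi(j)\#\phi(l) = \phi(j\#l)$, the defining relation $\phi(j)\#\phi(l) = \phi^{\dagger}(j\#l)$ forces $\phi^{\dagger} = \phi$. Hence $\hat\phi$ applies $\phi$ to both off-diagonal slots and fixes the diagonal,
\[ \hat\phi \begin{bmatrix} \alpha & j \\ l & \beta \end{bmatrix} = \begin{bmatrix} \alpha & \phi(j) \\ \phi(l) & \beta \end{bmatrix}, \]
which is plainly a linear bijection of $B$, with inverse $\widehat{\phi^{-1}}$.

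Next I would verify that $\hat\phi$ respects both the Brown product and the involution $-$. Commuting with $-$ is immediate, since $-$ only swaps $\alpha$ and $\beta$ while $\hat\phi$ leaves those entries untouched. For the product, I would apply $\hat\phi$ to each of the four entries of a generic product of two elements of $B$ and compare with the product of the images. The diagonal entries match provided $\Tr(\phi(j),\phi(l)) = \Tr(j,l)$, and the off-diagonal entries match provided $\phi(l_1)\#\phi(l_2) = \phi(l_1\#l_2)$ and $\phi(j_1)\#\phi(j_2) = \phi(j_1\#j_2)$; the scalar $\zeta$ factors uniformly through all the $\zeta$-terms and causes no trouble. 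The sharp identities are exactly the stated invariance property of Jordan algebra automorphisms, and the invariance of the bilinear trace form holds because $\Tr(\cdot,\cdot)$ is determined intrinsically by the cubic norm structure (equivalently by the generic minimal polynomial), which every element of $\Aut(J)$ preserves. This shows $\hat\phi \in \Aut(B,-)$.

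Finally, I would check that $\phi \mapsto \hat\phi$ is an injective homomorphism of algebraic groups: the relation $\widehat{\phi\psi} = \hat\phi\,\hat\psi$ is immediate from the description above, and $\hat\phi = \id$ forces $\phi = \id$ on $J$. It remains to place the image in the identity component $\Aut^+(B,-)$ rather than merely in $\Aut(B,-)$. For this I would invoke that $\Aut(J)$ is a connected algebraic group, being of type $\F_4$; the image of a connected group under a morphism of algebraic groups is connected and contains the identity, hence lies in $\Aut^+(B,-)$. I expect the only genuine content beyond bookkeeping to be the two invariance statements in the middle step, namely the sharp-product invariance (supplied) and the trace-form invariance, together with the appeal to connectedness to reach the identity component; the product comparison itself is routine once those facts are in hand.
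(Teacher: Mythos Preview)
Your proposal is correct and follows essentially the same approach as the paper: extend $\phi \in \Aut(J)$ to $\hat\phi$ acting by $\phi$ on both off-diagonal entries, then invoke the invariance of $\#$ and of $\Tr(\cdot,\cdot)$ under Jordan automorphisms together with the evident compatibility with the involution. Your treatment is in fact more complete than the paper's, which omits the explicit homomorphism/injectivity checks and the connectedness argument needed to land in $\Aut^+(B,-)$ rather than merely $\Aut(B,-)$.
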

\begin{proof}
Let $\phi \in \Aut(J)$ and extend $\phi$ to $\hat{\phi} \in \Aut^+(B,-)$ by
\[ \hat{\phi}\left( \begin{bmatrix}
					\alpha & j \\
					l & \beta \\
			            \end{bmatrix} \right) = 
				  \begin{bmatrix}
					\alpha &\phi(j) \\
					\phi(l) & \beta \\
			            \end{bmatrix}. \]
And notice that if $\phi \in \Aut(J)$ then $\phi(j) \# \phi(j') = \phi(j\#j')$ and $\Tr(\phi(j),\phi(j'))=\Tr(j,j')$.  Also, this map leaves the algebra involution on $B$ invariant.
\qed\end{proof}

Springer and Veldkamp \citep[7.3.2]{sv00} refer to the $26$ dimensional irreducible algebraic variety consisting of norm $1$ Albert algebra elements as
	\[ W(K) = \{ x \in J \otimes_k K \ | \ \N(x) = 1 \}. \]
 It is known $\Inv(J)$ acts transitively on $W(K)$, and $\Inv(J)$ is a quasisimple group of type $\E_6$ having dimension $26+52=78$. The algebraic group $\Inv(J)$ is defined over $k$.  Notice the dimension of $\Aut(J) \subset \Inv(J)$ is $52$.

\section{Brown algebras and subalgebras}

We continue with the technique from \citep{hu12,hu14} of classifying $k$-involutions of an automorphism group by classifying the subalgebras up to isomorphism that are fixed elementwise by an element of order $2$ in our automorphism group.  To this end we discuss the structure of some of these subalgebras.  By a \emph{subalgebra}, or a \emph{Brown subalgebra}, we mean a subset that is closed with respect to the product and closed under the restriction of the algebra involution.

\begin{prop}
The algebra of the following form 
\[ \left\{ \begin{bmatrix}
	\alpha & \phi_1(j) \\
	\phi_2(j) & \alpha 
	\end{bmatrix} \ \bigg| \ \alpha \in k, j \in J, \phi \in \Aut(J) \right\}, \]
is a Brown subalgebra of a reduced Brown algebra over $k$ if and only if $\phi_1$ and $\phi_2$ have order $2$ and  $\phi_1\phi_2=\phi_2\phi_1$.
\end{prop}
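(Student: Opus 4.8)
The plan is to check the three requirements for a Brown subalgebra in turn and to see that only closure under the product constrains $\phi_1,\phi_2$. First I would dispose of the trivial closures. Since $\phi_1,\phi_2$ are linear, a sum or scalar multiple of two matrices of the displayed shape is again of that shape (with $j$ replaced by the corresponding linear combination), so the set is a $k$-subspace, and it contains $1_B=\begin{bmatrix}1&0\\0&1\end{bmatrix}$ by taking $\alpha=1,\,j=0$. Closure under the involution $-$ is automatic: since $-$ only interchanges the two diagonal entries and both equal $\alpha$, every element of the set is fixed by $-$. Thus the entire content is closure under the Brown product, and, the algebra being reduced, I may take $\zeta=1$ throughout by Lemma \ref{browniso3}.

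The engine of the proof is to multiply two general elements $x,y$ of the set and ask when $xy$ lies back in it. Expanding with the Brown multiplication, I would simplify the four entries using the two structural facts available for $\phi_i\in\Aut(J)$ recorded after Proposition \ref{f4e6}: the sharp is intertwined, $\phi_i(j)\#\phi_i(j')=\phi_i(j\# j')$, and the trace pairing is preserved, $\Tr(\phi_i a,\phi_i b)=\Tr(a,b)$, so the adjoint of $\phi_i$ with respect to the nondegenerate form $\Tr(\,,\,)$ is $\phi_i^{-1}$. Membership of $xy$ in the set is then exactly two demands: the two diagonal entries must coincide, and the off-diagonal pair must equal $(\phi_1(j''),\phi_2(j''))$ for one common $j''$. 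Using the adjoint identity, the diagonal demand collapses to $\Tr\bigl(j,(\phi_1^{-1}\phi_2-\phi_2^{-1}\phi_1)(j')\bigr)=0$ for all $j,j'$; using the sharp intertwining, the off-diagonal demand collapses to $(\phi_2\phi_1^{-1}\phi_2-\phi_1)(j\# j')=0$ for all $j,j'$.

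To turn these into operator identities I would use that the sharp products span: from $1_X\# x=\Tr(x)1_X-x$ together with $\ch k\neq 2,3$ (which makes $1_X$ itself a nonzero multiple of $1_X\#1_X$, hence recoverable), every element of $J$ is a $k$-combination of products $j\# j'$. Combining this with nondegeneracy of $\Tr(\,,\,)$, both demands reduce to the single relation $\phi_1^{-1}\phi_2=\phi_2^{-1}\phi_1$, equivalently $(\phi_1^{-1}\phi_2)^2=\id$. The ``if'' direction is then immediate: if $\phi_1,\phi_2$ are commuting involutions, then $\phi_1^{-1}\phi_2=\phi_1\phi_2$ and $(\phi_1\phi_2)^2=\phi_1^2\phi_2^2=\id$, so the set is closed under the product and is a Brown subalgebra.

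The main obstacle is the converse, namely upgrading the single relation produced by the closure computation to the full statement that $\phi_1$ and $\phi_2$ are \emph{each} of order $2$ and commute: the relation $(\phi_1^{-1}\phi_2)^2=\id$ by itself does not separate the two conditions. I expect to bridge this gap using the ambient datum that accompanies such a subalgebra in the classification, where it is presented as the fixed set of the order-two automorphism of $B$ acting on the two off-diagonal legs; in that setting one first argues that each $\phi_i$ is individually an involution, and then the relation $(\phi_1^{-1}\phi_2)^2=\id$ reads, for involutions, precisely as $\phi_1\phi_2=\phi_2\phi_1$. I would therefore organize the converse in two steps, establishing $\phi_i^2=\id$ from the involution structure and only afterwards reading off commutativity from the product-closure relation above, and I would flag the recovery of $\phi_i^2=\id$ as the delicate point requiring more than bare closure.
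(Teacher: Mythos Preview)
Your expansion of the Brown product and the reduction of closure to the single operator identity $(\phi_1^{-1}\phi_2)^2=\id$ is correct and is exactly the route the paper takes: both arguments multiply two generic elements, use that $\phi_i\in\Aut(J)$ preserves $\#$ and the trace pairing, and read off when the result again has the displayed shape. For the ``if'' direction your proof and the paper's coincide.

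You have also put your finger on a genuine issue with the converse. Closure alone yields only $(\phi_1^{-1}\phi_2)^2=\id$, and this cannot be upgraded to the statement that $\phi_1$ and $\phi_2$ are individually of order $2$. The reason is intrinsic: the displayed set depends only on the composite $\phi_2\phi_1^{-1}$, since as $j$ ranges over $J$ the off-diagonal pairs $(\phi_1(j),\phi_2(j))$ trace out exactly the graph of $\phi_2\phi_1^{-1}$; replacing $(\phi_1,\phi_2)$ by $(\phi_1\psi,\phi_2\psi)$ for any $\psi\in\Aut(J)$ leaves the set unchanged. In particular, $\phi_1=\phi_2=\phi$ with $\phi$ of arbitrary order already gives a Brown subalgebra (namely $B^{\varpi}$), so the ``only if'' as literally stated is false. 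The paper's own proof in fact verifies only the ``if'' direction and does not attempt the converse, so your analysis goes further than the paper here; your instinct to import an ambient order-$2$ symmetry to pin down $\phi_i^2=\id$ is reasonable, but that is an extra hypothesis rather than a proof of the converse as written.
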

\begin{proof}
We take the product of two elements in the above set
\begin{align*}
&\begin{bmatrix}
	\alpha & \phi_1(j) \\
	\phi_2(j) & \alpha 
	\end{bmatrix}
	\begin{bmatrix}
	\beta & \phi_1(l) \\
	\phi_2(l) & \beta 
	\end{bmatrix} \\
	&\ \ \ \ =\begin{bmatrix}
	\alpha\beta + \Tr(\phi_1(j),\phi_2(l) ) & \alpha \phi_1(l) + \beta\phi_1(j) + \phi_2(j)\# \phi_2(l) \\
	\alpha \phi_2(l) + \beta\phi_2(j) + \phi_1(j)\# \phi_1(l) & \alpha\beta + \Tr(\phi_1(l),\phi_2(j) ) 
	\end{bmatrix} \\
	\\
	&\ \ \ \ =\begin{bmatrix}
	\alpha\beta + \Tr(\phi_1(j),\phi_2(l) ) & \alpha \phi_1(l) + \beta\phi_1(j) + \phi_2(j)\# \phi_2(l) \\
	\alpha \phi_2(l) + \beta\phi_2(j) + \phi_1(j)\# \phi_1(l) & \alpha\beta + \Tr(\phi_2\phi_1(l),j ) 
	\end{bmatrix} \\
	 \\
	&\ \ \ \ =\begin{bmatrix}
	\alpha\beta + \Tr(\phi_1(j),\phi_2(l) ) & \phi_1\Big(\alpha l + \beta j + \phi_1 \big(\phi_2(j)\# \phi_2(l) \big)\Big)\\
	\phi_2 \Big(\alpha l + \beta j + \phi_2 \big(\phi_1(j)\# \phi_1(l) \big)\Big)& \alpha\beta + \Tr(\phi_2(l),\phi_1(j) ) 
	\end{bmatrix} ,
	\end{align*}
since the bilinear form is symmetric, $\Tr(\phi(j), \phi(l)) = \Tr(j,l)$ when $\phi \in \Aut(J)$, the automorphisms commute, and the automorphisms are of order $2$
	\begin{align*}
	&\begin{bmatrix}
	\alpha & \phi_1(j) \\
	\phi_2(j) & \alpha 
	\end{bmatrix}
	\begin{bmatrix}
	\beta & \phi_1(l) \\
	\phi_2(l) & \beta 
	\end{bmatrix} \\
	&\ \ \ \ =\begin{bmatrix}
	\alpha\beta + \Tr(\phi_1(j),\phi_2(l) ) & \phi_1\Big(\alpha l + \beta j + \phi_1 \phi_2\big( j\# l \big)\Big)\\
	\phi_2 \Big(\alpha l + \beta j + \phi_1\phi_2 \big(j \# l \big)\Big)& \alpha\beta + \Tr(\phi_1(j),\phi_2(l) ) 
	\end{bmatrix}.
\end{align*}
\qed\end{proof}

The algebra $\mathcal{B}(J,k)$ takes the following form
\[ \mathcal{B}(J,k) = \left\{ \begin{bmatrix}
					\alpha & j \\
					j & \alpha \\
				   \end{bmatrix} \ \bigg| \ \alpha \in k, j \in J \right\} \]
when embedded in a reduced Brown algebra.  This algebra is fixed elementwise by
\[ \varpi \left(\begin{bmatrix}
					\alpha & j \\
					l & \beta \\
				   \end{bmatrix} \right) =
					\begin{bmatrix}
					\beta & l \\
					j & \alpha \\
				   \end{bmatrix}. \]
This map induces the outer automorphism in $\Inv(J)$ the quasisimple group of type $\E_6$.  

Let $\dagger: \Inv(J) \to \Inv(J)$ be the automorphism that takes $\phi \mapsto \phi^{\dagger}$ the unique linear map such that
\[ \Tr \big(\phi(x), \phi^{\dagger}(y) \big) = \Tr \big( x,y \big) \]
for all $x,y \in A$.  It is known that the trace form exhibits $\U$-symmetry,
\begin{equation}
\label{tracesym}
\Tr(\U_x y, z) = \Tr( y, \U_x z).
\end{equation}
Using this idea we can prove the following result.

\begin{lem}
\label{outerdef}
If $\U_x \in \Inv(J)$, then $\U_x^{\dagger} = \U_x^{-1}= \U_{x^{-1}}$.
\end{lem}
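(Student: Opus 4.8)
The plan is to read both the defining property of the $\dagger$ map and the $\U$-symmetry identity (\ref{tracesym}) as statements about adjoints with respect to the nondegenerate trace bilinear form $\Tr(-,-)$, and then to split the claim into the two separate equalities $\U_x^{\dagger} = \U_x^{-1}$ and $\U_x^{-1} = \U_{x^{-1}}$.

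First I would record that $\U_x$ is invertible: by hypothesis $\U_x \in \Inv(J)$, and every element of $\Inv(J)$ is by definition a linear bijection of $J$, so $\U_x^{-1}$ exists and again lies in the group $\Inv(J)$. For the equality $\U_x^{\dagger} = \U_x^{-1}$, I would use (\ref{tracesym}), which says precisely that $\U_x$ is self-adjoint for the trace form. Substituting $z = \U_x^{-1} w$ into (\ref{tracesym}) gives $\Tr(\U_x y, \U_x^{-1} w) = \Tr(y, \U_x \U_x^{-1} w) = \Tr(y, w)$ for all $y, w \in J$. Comparing with the defining relation $\Tr(\phi(y), \phi^{\dagger}(w)) = \Tr(y,w)$, and using that this relation determines $\phi^{\dagger}$ uniquely (nondegeneracy of the trace form), I conclude $\U_x^{\dagger} = \U_x^{-1}$.

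For the second equality $\U_x^{-1} = \U_{x^{-1}}$, I would invoke the fundamental formula $\U_{\U_a b} = \U_a \U_b \U_a$ of Jordan theory, together with the characterizing property $\U_x x^{-1} = x$ of the Jordan inverse. Applying the fundamental formula with $a = x$ and $b = x^{-1}$ yields $\U_x = \U_{\U_x x^{-1}} = \U_x \U_{x^{-1}} \U_x$; cancelling the invertible operator $\U_x$ on each side gives $\U_{x^{-1}} \U_x = \id$ and hence $\U_{x^{-1}} = \U_x^{-1}$. Combining the two equalities gives $\U_x^{\dagger} = \U_x^{-1} = \U_{x^{-1}}$.

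The steps are entirely formal once the identities are in place; the only point requiring care, and the main potential obstacle, is that the equality $\U_{x^{-1}} = \U_x^{-1}$ rests on the fundamental formula, which is not recorded in the present excerpt. I would therefore cite it from \citep{mcc05}, or, if a self-contained argument is preferred, derive it from the expression $\U_x = 2\Lm_x^2 - \Lm_{x^2}$. I also want to confirm that the nondegeneracy of $\Tr(-,-)$, valid in characteristic not $2$ or $3$, is available, since it is what makes $\phi^{\dagger}$ genuinely well defined and lets me conclude the first equality from the adjoint computation.
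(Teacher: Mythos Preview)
Your argument is correct and follows essentially the same route as the paper: both use the $\U$-symmetry (\ref{tracesym}) together with the defining relation for $\dagger$ and nondegeneracy of $\Tr$ to obtain $\U_x^{\dagger} = \U_x^{-1}$. You are in fact more careful than the paper, which simply writes $\U_x^{-1} = \U_{x^{-1}}$ without comment; your derivation of this from the fundamental formula $\U_{\U_a b} = \U_a \U_b \U_a$ is a standard and appropriate justification.
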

\begin{proof}
From the above discussion
\[ \Tr(\U_x y, \U_x^{\dagger} z) = \Tr(y,z), \]
for all $y,z \in J$.  Also, recall that $\Tr$ is nondegenerate and $\U_x:J \to J$ is a bijection since $\N(x) \neq 0$.  Using \ref{tracesym} we see that
\[ \Tr(\U_x y, \U_x^{\dagger} z) = \Tr( y, \U_x\U_x^{\dagger} z) = \Tr(y,z), \]
so $\U_x\U_x^{\dagger} z = z$ for all $z \in J$, and $\U_x^{\dagger} = \U_x^{-1} =\U_{x^{-1}}$.
\qed\end{proof}

Also worth noticing is that when $J$ is split and $\U_x \in \Inv(J),$
\[ \U_x^{\dagger} = \U_{x^{-1}} = \U_{x^{\#}}, \]
since $x^{-1} = \N(x)^{-1} x^{\#}$ and $\N(x) =1$.
This outer automorphism can also be characterized by the property
\[ \phi(x) \# \phi(y) = \phi^{\dagger}(x \# y) \]
for all $x,y \in A$.  For a deeper discussion and proofs see \citep[7.3]{sv00}.  It is clear that this is the same map from \ref{browniso2}, which is implied by the notation.  To see that $\varpi$ is in $\Aut^+(B,-)$ induces $\dagger$ on $\Inv(J)$ we simply notice
\[ \varpi\hat{\phi} \varpi \begin{bmatrix}
					\alpha & j \\
					l & \beta \\
				   \end{bmatrix} = 
\varpi \hat{\phi}\begin{bmatrix}
					\beta & l \\
					j & \alpha \\
				   \end{bmatrix} =
	\varpi \begin{bmatrix}
					\beta & \phi(l) \\
					\phi^{\dagger}(j) & \alpha \\
				   \end{bmatrix} =
				\begin{bmatrix}
					\alpha & \phi^{\dagger}(j) \\
					\phi(l) & \beta \\
				   \end{bmatrix}, \]
and so $\varpi \hat{\phi} \varpi = \widehat{\phi^{\dagger}}$. \\

When we consider $\Inv(J)$ with $J$ constructed using the first Tits construction we can realize the action of the outer automorphism on a maximal split torus as $\dagger:\Inv(J) \to \Inv(J)$.  Consider $\varphi :\SL_3 \times \SL_3 \times \SL_3 \hookrightarrow \Inv\left(J(\Mat_3( \  \ ),1)\right)$, then $\varphi$ takes the following form
\[ \varphi(u,v,w)(a_0,a_1,a_2) = \left(u a_0   v^{-1}, v  a_1  w^{-1}, w  a_2  u^{-1} \right).  \]
Through straight forward computations we can verify that $\varphi(u,v,w)$ leaves the norm form on $J(\Mat_3(k),1)$ invariant.  From this action we can construct a $k$-split maximal torus.  The outer automorphism group is of the from 
\[ \Out\big(\Inv(J)\big)=\{ \id, \dagger \}, \]
and the nontrivial representative offered in \citep{sp73} is of the form
\[ \varphi^{\dagger}(u,v,w) = \varphi(v,u,w). \]

\section{The structure of $\Aut^+(B,-)$}

In \citep{mcc69} we see that in general the identity
	\[ \N\left( \U_x y \right) \U_x y = \N(x)^2 \N(y) \U_x y, \]
is true.
When $k$ is an integral domain (or more generally $k$ has no $3$-torsion or $k$ has no nilpotent elements \citep{mcc05}) we have the identity
\begin{equation}
\N\left( \U_x y \right) = \N(x)^2 \N(y), 
\end{equation}
and this gives us a family of elements that leave $\N:J \to k$ invariant when $\N(x)^2 = 1$.  

\begin{prop}
\label{elinvj}
If $\phi \in \Aut(J)$, $\phi \U_x = \U_{\phi(x)} \phi$.
\end{prop}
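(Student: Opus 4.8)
The plan is to verify the identity $\phi \U_x = \U_{\phi(x)} \phi$ by applying both sides to an arbitrary element $y \in J$ and reducing the claim to the fact that an automorphism $\phi \in \Aut(J)$ preserves the Jordan product, the trace form, and the sharp map. The cleanest route is to use the explicit formula for the $\U$-operator in terms of the sharp product and trace, namely $\U_x y = \Tr(x,y)x - x^{\#}\# y$, which is stated in the excerpt. So first I would compute $\phi(\U_x y) = \phi\big( \Tr(x,y)x - x^{\#}\# y \big)$ using linearity of $\phi$.

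Next I would push $\phi$ through each term. Since $\Tr(x,y)$ is a scalar, linearity gives $\phi(\Tr(x,y)x) = \Tr(x,y)\phi(x)$, and because $\phi \in \Aut(J)$ preserves the trace form, $\Tr(x,y) = \Tr(\phi(x),\phi(y))$. For the second term I would use that an automorphism commutes with the sharp operations: from the defining property of automorphisms noted in the excerpt, $\phi(j)\#\phi(l) = \phi(j\#l)$, and in particular $\phi(x^{\#}) = \phi(x)^{\#}$ (taking $j=l=x$ in the polarized sharp, or equivalently applying $\phi$ to $x^{\#} = x^2 - \Tr(x)x + \Sr(x)1_X$ and using that $\phi$ fixes $1_X$, preserves products, and preserves $\Tr$ and $\Sr$). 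Combining these, $\phi(x^{\#}\# y) = \phi(x^{\#})\#\phi(y) = \phi(x)^{\#}\#\phi(y)$. Assembling the two pieces yields
\[ \phi(\U_x y) = \Tr(\phi(x),\phi(y))\phi(x) - \phi(x)^{\#}\#\phi(y) = \U_{\phi(x)}\phi(y), \]
which is exactly the right-hand side evaluated at $y$. Since $y$ was arbitrary, the operator identity follows.

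I do not expect any serious obstacle here, since the whole computation rests on the three structural facts that $\phi$ is linear, fixes the identity, and preserves both the Jordan (hence sharp) product and the trace form. The one point requiring mild care is justifying $\phi(x^{\#}) = \phi(x)^{\#}$: I would establish this either directly from $\phi(j)\#\phi(l)=\phi(j\#l)$ and the polarization relating $\#$ to $x^{\#}$, or by applying $\phi$ term by term to $x^{\#} = x^2 - \Tr(x)x + \Sr(x)1_X$, noting that $\phi$ preserves $\Tr$ and the quadratic trace $\Sr$ (the latter following from preservation of the sharped cubic form structure, since $\Sr$ is determined by the norm and trace data that $\phi$ fixes). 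An alternative, equally valid plan would bypass the sharp formula altogether and instead use the operator expression $\U_x = 2\Lm_x^2 - \Lm_{x^2}$ together with the automorphism relation $\phi\Lm_x\phi^{-1} = \Lm_{\phi(x)}$, which holds because $\phi(x\cdot z) = \phi(x)\cdot\phi(z)$; conjugating $\U_x$ by $\phi$ then gives $\phi\U_x\phi^{-1} = 2\Lm_{\phi(x)}^2 - \Lm_{\phi(x^2)} = 2\Lm_{\phi(x)}^2 - \Lm_{\phi(x)^2} = \U_{\phi(x)}$, which rearranges to the claimed identity. I would likely present the second approach as the primary argument, since it isolates the essential lemma $\phi\Lm_x\phi^{-1}=\Lm_{\phi(x)}$ most transparently.
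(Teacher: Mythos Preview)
Your proposal is correct, and your second approach---using $\U_x = 2\Lm_x^2 - \Lm_{x^2}$ together with $\phi\Lm_x\phi^{-1} = \Lm_{\phi(x)}$---is precisely the argument the paper gives. Your first approach via the sharp formula is a valid alternative but not the one used in the paper.
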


\begin{proof}
Let $\phi\in \Aut(J)$ and $x \in J$.  Let $y \in J$
	\begin{align*}
	\phi\left( \U_x y \right) &= \phi \left( 2\Lm_x^2(y) - \Lm_{x^2}(y) \right) \\
		&=  2 \phi(x) \cdot \left(\phi(x)\cdot \phi(y) \right) - \phi(x)^2\cdot \phi(y) \\
		&= \U_{\phi(x)} \phi(y).
	\end{align*}
\qed\end{proof}

The group of all isotopies from $J$ to itself is a group called the \emph{structure group} denoted by $\Gamma(J)$.  There is a subgroup of $\Gamma(J)$ known as the \emph{inner structure group},
\[ \Gamma_1(J) = \langle \U_x |  \N(x) \neq 0 \rangle. \]
In \citep{ja61} Jacobson defines the norm preserving group of a Jordan algebra $J$, which we are denoting by $\Inv(J)$.  Jacobson also defines the following subgroups of $\Inv(J)$
\[ \Inv_1(J) = \{ \U_{a_r} \cdots \U_{a_2}\U_{a_1} \ | \ \N(a_r)^2\cdots \N(a_2)^2 \N(a_1)^2 = 1 \} \]
and 
\[ \Inv_2(J) = \{ \U_{a_r} \cdots \U_{a_2}\U_{a_1} \ | \ \N(a_r)\cdots \N(a_2) \N(a_1) = 1 \}. \]
Referring to $\Inv_2(J)$ as the reduced norm preserving group.  Notice that 
\[ \Inv(J) \supset \Inv_1(J) \supset \Inv_2(J). \]
Clearly $\Gamma_1(J) \supset \Inv_1(J)$, and in \citep[I-12]{ja68} it is left as an exercise to show the following.

\begin{prop}
\label{gamform}
Every element of $\Gamma_1(J)$ is of the form $\U_x^{\langle y \rangle}$ for $x$ and $y$ such that $\N(x)\N(y) \neq 0$.
\end{prop}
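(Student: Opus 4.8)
The plan is to reduce the statement to a single multiplicative identity for $\U$-operators. The starting point is the standard formula for the $\U$-operator of an isotope: for invertible $y$ one has $\U_x^{\langle y \rangle} = \U_x\U_y$, which follows immediately from $\U_x = 2\Lm_x^2 - \Lm_{x^2}$ together with the definition of the product on $J^{\langle y \rangle}$. Thus the assertion is exactly that every element of $\Gamma_1(J)$ is a product $\U_x\U_y$ of two $\U$-operators with $\N(x)\N(y)\neq 0$. I would therefore set $H = \{\U_x\U_y \mid \N(x)\N(y)\neq 0\}$ and prove $H = \Gamma_1(J)$.

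The inclusion $H\subseteq\Gamma_1(J)$ is clear, and $H$ contains every generator since $\U_x = \U_x\U_e$ with $\N(e)=1$. Moreover $H$ is closed under inverses, because $(\U_x\U_y)^{-1} = \U_{y^{-1}}\U_{x^{-1}}$, using $\U_x^{-1}=\U_{x^{-1}}$, which is a consequence of the fundamental formula $\U_{\U_a b}=\U_a\U_b\U_a$ together with $\U_x x^{-1} = x$; and the multipliers satisfy $\N(y^{-1})\N(x^{-1}) = (\N(x)\N(y))^{-1}\neq 0$. Since $\Gamma_1(J)$ is generated by the $\U_x$, it then suffices to show that $H$ is stable under left multiplication by a single generator, i.e. that every triple product $\U_c\U_x\U_y$ again lies in $H$. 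By induction on word length this one reduction yields $H = \Gamma_1(J)$, and the identity and inverse computations above complete the argument.

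Everything thus hinges on rewriting a product of three $\U$-operators as a product of two, and I expect this to be the main obstacle. The fundamental formula $\U_{\U_a b} = \U_a\U_b\U_a$ lets one fuse a \emph{symmetric} triple into a single $\U$, but it does not by itself collapse an asymmetric triple $\U_c\U_x\U_y$; a genuine factorization is needed. To see why the reduction should nonetheless go through, I would pass to the reduced Hermitian realization $J\cong\Her_3(C,\gamma)$: there $\U_c\U_x\U_y$ acts as a two-sided sandwiching $z\mapsto m\,z\,\widetilde{m}$ whose two factors are adjoint to one another, $m$ being built from $c,x,y$, so that exhibiting the operator as $\U_p\U_q$ is equivalent to factoring the single invertible element $m\in J$ as a product $pq$ of two invertible elements. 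This factorization is exactly the point where the associative ``product of two symmetric matrices'' argument must be replaced by cubic-norm bookkeeping valid for the octonionic Albert algebra, and carrying it out rigorously is precisely the content of the exercise in \citep[I-12]{ja68}; matching of multipliers alone (both sides have multiplier a square of the relevant product of norms) is necessary but far from sufficient, which is what makes this step the crux.
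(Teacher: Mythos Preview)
Your reduction is sound: via $\U_x^{\langle y\rangle}=\U_x\U_y$ (the paper's identity~(\ref{isou})) the assertion becomes that $H=\{\U_x\U_y:\N(x)\N(y)\neq 0\}$ is closed under left multiplication by a single $\U_c$, i.e.\ that any triple $\U_c\U_x\U_y$ collapses to a product of two $\U$-operators. You isolate this correctly as the crux. Where you part company with the paper is in how you attack it. The paper's hint pairs (\ref{isou}) with the iterated--isotope identity $(J^{\langle y\rangle})^{\langle z\rangle}=J^{\langle \U_y(z)\rangle}$ of (\ref{isoiso}); the intended argument stays entirely inside Jordan--theoretic language, recognising a word in $\U$-operators as a single $\U$-operator of a suitable isotope and hence, again by (\ref{isou}), as $\U_x\U_y$. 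You never invoke (\ref{isoiso}).

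Your substitute---passing to $J\cong\Her_3(C,\gamma)$ and reading $\U_c\U_x\U_y$ as a two--sided sandwich $z\mapsto m\,z\,\widetilde m$, then factoring $m$---does not go through. Collapsing a product of $\U$-operators to a single sandwich $z\mapsto m\,z\,\widetilde m$ requires associativity of the coordinate algebra; for an Albert algebra $C$ is an octonion algebra, and a composite such as $\U_a\U_b$ cannot be rewritten as $z\mapsto (ab)\,z\,(\widetilde{ab})$. Even formally, $\Her_3(C,\gamma)$ is not closed under the matrix product, so ``factor the invertible element $m\in J$ as a product $pq$ of two invertible elements'' has no meaning inside $J$. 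Finally, the proposition (as in Jacobson's exercise) is a statement about an arbitrary Jordan algebra, where no Hermitian realisation is available at all. So the step you flag as the main obstacle is not merely deferred to \citep{ja68}; the route you sketch toward it is blocked, and the isotope identity (\ref{isoiso}) is precisely what the paper intends to replace it with.
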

This is seen to be true by applying the following two properties of $\U$-operators related to  isotopes of $J$,
\begin{align}
\U_x^{\langle y \rangle} &= \U_x\U_y \label{isou} \\
\big(J^{\langle y \rangle} \big)^{\langle z \rangle} &= J^{\langle \U_y(z) \rangle}. \label{isoiso}
\end{align}
We can say something further in the case when $J$ is a split simple Albert algebra.  In particular the following result appears as \citep[Theorem 9]{ja61}.

\begin{thm}
\label{l2inv}
If $J$ is a split exceptional simple Jordan algebra the reduced norm preserving group coincides with the norm preserving group, i.e. 
\[ \Inv_2(J) = \Inv(J). \]
\end{thm}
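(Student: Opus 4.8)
The plan is to prove the two inclusions $\Inv_2(J)\subseteq\Inv(J)$ and $\Inv(J)\subseteq\Inv_2(J)$ separately, the first being immediate and the second carrying all the content. For the easy direction, each generator $\U_{a}$ of $\Inv_2(J)$ is a norm similarity with multiplier $\N(a)^2$, since $\N(\U_a y)=\N(a)^2\N(y)$; hence a product $\U_{a_r}\cdots\U_{a_1}$ with $\prod_i\N(a_i)=1$ has multiplier $\prod_i\N(a_i)^2=1$ and lies in $\Inv(J)$. The real task is the reverse inclusion, which I will reduce to the single statement that every norm isometry is a product of $\U$-operators, i.e. $\Inv(J)\subseteq\Gamma_1(J)$.

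First I will dispose of the distinction between $\Inv_1(J)$ and $\Inv_2(J)$, which costs nothing in characteristic $\neq 2$. If $g=\U_{a_r}\cdots\U_{a_1}\in\Inv_1(J)$ then $\prod_i\N(a_i)^2=1$, so $\prod_i\N(a_i)=\pm1$; in the $+$ case $g\in\Inv_2(J)$ already, while in the $-$ case I will prepend the operator $\U_{-1_J}$. Since $\Lm_{-1_J}=-\id$ we have $\U_{-1_J}=2\Lm_{-1_J}^2-\Lm_{1_J}=\id$, and $\N(-1_J)=-1$ because $\N$ is cubic; thus this trivial factor flips the sign of the norm product without changing the transformation, so $g\in\Inv_2(J)$. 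Hence $\Inv_1(J)=\Inv_2(J)$, and it suffices to prove $\Inv(J)=\Inv_1(J)$.

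For this I will work inside $\mathrm{GL}(J)$ and compare dimensions. The subgroup $\Gamma_1(J)=\langle\U_x\mid\N(x)\neq0\rangle$ is generated by the image of the irreducible variety $\{x\mid\N(x)\neq0\}$ under $x\mapsto\U_x$, a morphism sending $1_J$ to $\id$; hence $\Gamma_1(J)$ is a closed connected subgroup. Differentiating $t\mapsto\U_{1_J+tz}=2\Lm_{1_J+tz}^2-\Lm_{(1_J+tz)^2}$ at $t=0$ yields the tangent vector $2\Lm_z$, so $\mathrm{Lie}\,\Gamma_1(J)$ contains every left multiplication $\Lm_z$ and, closing under the bracket, every inner derivation $[\Lm_x,\Lm_y]$. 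For the split Albert algebra all derivations are inner and $\Der(J)\cong\mathfrak{f}_4$ has dimension $52$, so $\mathrm{Lie}\,\Gamma_1(J)\supseteq\Lm_J\oplus\Der(J)$, of dimension $27+52=79$; thus $\dim\Gamma_1(J)\geq 79$.

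Finally I will combine this with the multiplier character. Every element of $\Gamma_1(J)$ is a norm similarity, so the assignment $\mu$ defined by $\N(gy)=\mu(g)\N(y)$ is a homomorphism $\Gamma_1(J)\to G_m$, surjective because $\mu(\U_x)=\N(x)^2$ ranges over all of $G_m$. Its kernel is exactly $\Inv_1(J)$, so $\dim\Inv_1(J)=\dim\Gamma_1(J)-1\geq 78$. But $\Inv_1(J)\subseteq\Inv(J)$ forces $\dim\Inv_1(J)\leq 78$, whence equality; as $\Inv_1(J)$ is closed in the variety $\Inv(J)$, which is connected (being quasisimple of type $\E_6$) and hence irreducible of dimension $78$, it must equal it. Together with $\Inv_1(J)=\Inv_2(J)$ this gives the theorem. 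The main obstacle is precisely the dimension count for $\Gamma_1(J)$: everything rests on the classical fact that the inner derivations $[\Lm_x,\Lm_y]$ already exhaust $\Der(J)=\mathfrak{f}_4$ for the split Albert algebra, so that $\Gamma_1(J)$ fills out the full $79$-dimensional structure group rather than a proper subgroup.
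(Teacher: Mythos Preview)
The paper does not give its own proof of this theorem; it is quoted from Jacobson \citep{ja61} (Theorem~9 there) without argument, so there is nothing in the paper to compare against. Assessing your proof on its own:

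The reduction $\Inv_1(J)=\Inv_2(J)$ via $\U_{-1_J}=\id$ with $\N(-1_J)=-1$ is correct, and the Lie-algebra computation showing $\Lie\Gamma_1(J)\supseteq \Lm_J\oplus\Der(J)$ (using that all derivations of the split Albert algebra are inner) is also correct. Together these give $\dim\Gamma_1(J)\geq 79$, and your closing dimension argument then yields $\Inv_1(J)=\Inv(J)$ \emph{as algebraic groups}, i.e.\ over~$\bar k$.

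The gap is that the theorem, as stated and as the paper uses it immediately afterward, is about the abstract groups over the ground field $k$: one needs every $k$-linear norm isometry to be a product $\U_{a_r}\cdots\U_{a_1}$ with each $a_i\in J$ rational over $k$ and $\prod_i\N(a_i)=1$. Your argument does not address this descent. The sentence ``$\mu(\U_x)=\N(x)^2$ ranges over all of $G_m$'' already fails on $k$-points when $k^{*}\neq(k^{*})^{2}$, which shows you are tacitly working over $\bar k$; likewise ``closed of the same dimension in an irreducible variety, hence equal'' is a statement about $\bar k$-points. Equality of Zariski closures does not by itself force equality of $k$-points of the generating subgroup. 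What is missing is exactly a Kneser--Tits-type statement for the split simply connected $\E_6$, or an explicit rational argument in the spirit of Jacobson's original proof, which uses the idempotent and Peirce structure of the split Albert algebra to produce generators directly over~$k$ and thereby sidesteps the rationality issue entirely.
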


\begin{prop}
\label{comp}
For an Albert algebra $J$, $J^{\langle y \rangle}$ is reduced if and only if $J$ is reduced.  Moreover, the octonion algebra associated with $J^{\langle y \rangle}$ is isomorphic to the octonion algebra associated with $J$.
\end{prop}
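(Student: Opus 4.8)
The plan is to treat the two assertions separately, both resting on how the cubic norm form behaves under passage to an isotope. First I would record the standard fact that for invertible $y$ the isotope $J^{\langle y \rangle}$ has cubic norm form $\N^{\langle y \rangle} = \N(y)\,\N$. This is forced by the normalization $\N^{\langle y \rangle}(e^{\langle y \rangle}) = 1$ together with $e^{\langle y \rangle} = y^{-1}$ and $\N(y^{-1}) = \N(y)^{-1}$, and it is consistent with the $\U$-operator relation $\U_x^{\langle y \rangle} = \U_x\U_y$ from (\ref{isou}) and the identity $\N(\U_x z) = \N(x)^2\N(z)$. In particular $\N^{\langle y \rangle}$ is a nonzero scalar multiple of $\N$, hence lies in the same similarity class.

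For the first assertion I would invoke the division/reduced dichotomy for Albert algebras: such an algebra is a division algebra precisely when its cubic norm form is anisotropic, and is reduced precisely when that form is isotropic. Since $\N(y) \in G_m(k)$, the forms $\N$ and $\N^{\langle y \rangle} = \N(y)\,\N$ are isotropic simultaneously, so $J$ is reduced if and only if $J^{\langle y \rangle}$ is reduced.

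For the second assertion, assume both algebras reduced and write $J \cong \Her_3(C,\gamma)$ and $J^{\langle y \rangle} \cong \Her_3(C',\gamma')$. The coordinate octonion algebra of a reduced Albert algebra is recovered up to isomorphism from its norm form: writing out $\N$ on $\Her_3(C,\gamma)$ exhibits the $8$-dimensional norm form $n_C$ of the octonion algebra, scaled by the $\gamma_i$, as the off-diagonal contribution to $\N$, and $n_C$ is a $3$-fold Pfister form that is thereby determined up to similarity by the similarity class of $\N$. Because $\N^{\langle y \rangle}$ is similar to $\N$, the Pfister form read off from $J^{\langle y \rangle}$, namely $n_{C'}$, is similar to $n_C$. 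I would then use the crucial property that similar Pfister forms are isometric (a Pfister form is round, so its similarity factors coincide with its represented values) to conclude $n_{C'} \cong n_C$; since an octonion algebra is determined up to isomorphism by its norm form, this gives $C' \cong C$.

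The main obstacle is the bookkeeping in the second step: making precise that rescaling $\N$ by $\N(y)$ rescales exactly the octonion norm form sitting inside it, while the diagonal contribution and the $\gamma_i$-twists are absorbed into the overall similarity factor. Once the extraction of $n_C$ from the similarity class of $\N$ is pinned down, the passage from similarity to isometry is immediate from the Pfister property, and the classification of octonion algebras by their norm forms closes the argument. As a sanity check, in the split case relevant to the rest of the paper $\N$ is the split cubic form, $\N(y)\,\N \cong \N$, and $J^{\langle y \rangle}$ is again split with split coordinate octonions, recovering the statement directly.
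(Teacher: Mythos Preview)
The paper states this proposition without proof; it is quoted as a standard fact from the Jordan algebra literature (the relevant references in the paper are \citep{ja61,ja68,mcc05}). So there is no argument in the paper to compare against, and the question is only whether your sketch stands on its own.

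Your treatment of the first assertion is fine. The identity $\N^{\langle y\rangle}=\N(y)\,\N$ is correct, and reduced $\Leftrightarrow$ $\N$ isotropic is the standard dichotomy, so scaling by $\N(y)\in G_m(k)$ preserves the property.

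The second assertion is where your sketch has a genuine gap, and it is not the ``bookkeeping'' you describe. You assert that the similarity class of $n_C$ is determined by the similarity class of the cubic form $\N$, but your justification is only that \emph{once a presentation $J\cong\Her_3(C,\gamma)$ is chosen}, one can see scaled copies of $n_C$ inside $\N$. That is not an invariance statement: a priori $J^{\langle y\rangle}\cong\Her_3(C',\gamma')$ is a \emph{different} presentation of the same underlying vector space, and from that presentation one reads off $n_{C'}$; nothing you have said forces the two readings to agree even up to similarity. The claim that they do is precisely the content of the proposition, so invoking it here is circular. Calling this step ``bookkeeping'' understates it---it is the whole point.

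The standard route is through the quadratic trace form, not the cubic norm. One computes that the bilinear trace on the isotope is $T^{\langle y\rangle}(x,z)=T(x,\U_y z)$, so $T^{\langle y\rangle}$ and $T$ are isometric via the self-adjoint invertible operator $\U_y$ (use $\U$-symmetry of $T$, equation~(\ref{tracesym}) in the paper, together with $\U_y=\U_{y^{1/2}}^2$ after passing to a suitable \'etale extension, or argue directly with the Witt class). Since by the theorem recalled in the paper two reduced $\mathcal J$-algebras are isomorphic iff their quadratic trace forms are equivalent, and the trace form on $\Her_3(C,\gamma)$ is $\langle 1,1,1\rangle\perp\langle\gamma_2\gamma_3,\gamma_1\gamma_3,\gamma_1\gamma_2\rangle\otimes n_C$, one extracts the Pfister form $n_C$ as an invariant of the isometry class of $T$ (e.g.\ via its class in $I^3$ of the Witt ring). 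This is where the Pfister roundness argument you mention actually does the work. Alternatively, and this is closer to what Jacobson does, one shows directly that the isotope of $\Her_3(C,\gamma)$ by any invertible element is isomorphic to $\Her_3(C,\gamma')$ for some $\gamma'$, by reducing to diagonal $y$.
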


Since any two Albert algebras over split octonion algebras taken over the same field have the same norm preserving group, and it is also known that all split Albert algebras over a given field are isomorphic.

\begin{cor}
\label{topemorph}
If $J^{\langle y \rangle}$ is an isotope of a split Albert algebra $J$, then $\Inv(J^{\langle y \rangle}) \cong \Inv(J)$, moreover $J \cong J^{\langle y \rangle}$.
\end{cor}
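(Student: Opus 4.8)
The plan is to funnel everything through Proposition \ref{comp} and the two facts recorded in the sentence immediately preceding the corollary, namely that any two Albert algebras coordinatized by the split octonion algebra over $k$ share a norm preserving group, and that all split Albert algebras over a fixed field are mutually isomorphic. The only real work is to locate $J^{\langle y \rangle}$ among the reduced Albert algebras over the split octonion algebra; once that is done both conclusions are immediate.

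First I would record that a split Albert algebra is reduced, so $J \cong \Her_3(C,\gamma)$ with $C$ the split octonion algebra over $k$. Since $J^{\langle y \rangle}$ is an isotope of an Albert algebra it is again an Albert algebra, and Proposition \ref{comp} applies: because $J$ is reduced, $J^{\langle y \rangle}$ is reduced, and its associated octonion algebra is isomorphic to $C$, hence again split. Thus $J$ and $J^{\langle y \rangle}$ are both reduced Albert algebras whose coordinate octonion algebra is the split one, which is exactly the hypothesis needed to invoke the recalled facts.

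For the norm preserving groups I would argue that they are in fact equal as subgroups of $\Lin(J)$, not merely isomorphic. The generic norm of an isotope satisfies $\N^{\langle y \rangle} = \N(y)\,\N$ with $\N(y) \in G_m(k)$ a fixed nonzero scalar, so the defining condition $\N^{\langle y \rangle}(\phi(j)) = \N^{\langle y \rangle}(j)$ is literally the same condition as $\N(\phi(j)) = \N(j)$. Hence $\Inv(J^{\langle y \rangle}) = \Inv(J)$, which in particular gives the asserted isomorphism; this also recovers, and is consistent with, the general statement that Albert algebras over the split octonion algebra have a common norm preserving group. For the stronger claim $J \cong J^{\langle y \rangle}$ I would use that a reduced Albert algebra coordinatized by the split octonion algebra is itself split: in the trace quadratic form $Q(x) = \tfrac{1}{2}\Tr(x^2)$ the three off-diagonal octonion entries contribute (nonzero scalar multiples of) the octonion norm $\q_C$, which is hyperbolic when $C$ is split, so $Q$ is a rank-three diagonal form orthogonal to twelve hyperbolic planes, independent of $\gamma$ up to isometry. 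By the classification of reduced $\mathcal{J}$-algebras through their quadratic form (the paper's first theorem) every such algebra is therefore split, so $J^{\langle y \rangle}$ is split, and uniqueness of the split Albert algebra over $k$ yields $J^{\langle y \rangle} \cong J$.

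The main obstacle is the single genuinely algebra-theoretic input, namely verifying that the trace form of a reduced Albert algebra over the split octonion algebra is hyperbolic (equivalently, that the algebra is split), since the scalars $\gamma_i$ must be seen not to affect the isometry class of $Q$. Everything else is bookkeeping with Proposition \ref{comp} and the isotope norm-multiplier formula, so I would isolate this hyperbolicity computation and otherwise keep the argument short.
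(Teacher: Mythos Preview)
Your argument is correct and follows the same skeleton the paper uses: Proposition~\ref{comp} places $J^{\langle y\rangle}$ among the reduced Albert algebras coordinatized by the split octonions, and then the two facts recorded just before the corollary finish both claims. The paper leaves the corollary unproved beyond that sentence, so your write-up simply fills in the details it omits.

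The one place you genuinely do something the paper does not is the $\Inv$ part: rather than citing that Albert algebras over the split octonions share a norm preserving group, you use the isotope norm identity $\N^{\langle y\rangle}=\N(y)\,\N$ to get the literal equality $\Inv(J^{\langle y\rangle})=\Inv(J)$ inside $\Lin(J)$. This is more elementary and self-contained, and it yields a slightly stronger conclusion than the isomorphism the paper asserts. Your hyperbolicity computation for $Q$ on $\Her_3(C,\gamma)$ with $C$ split is the standard way to see that such algebras are all split regardless of $\gamma$; the paper takes this as known, so isolating it as you suggest is reasonable but not strictly necessary here.
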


Notice if we have an element $\phi \in \Inv(J)$ and $\phi^2 = \id$, then  
\[ \phi = \U_{a_r}\cdots \U_{a_2}\U_{a_1} \text{ and } \N(a_r)^2 \cdots \N(a_2)^2\N(a_1)^2 = 1. \]
Furthermore, if $J$ is split by \ref{l2inv} we can choose $a_1,a_2,\ldots,a_r \in J$ such that $ \N(a_r) \cdots\N(a_2)\N(a_1) = 1$.  By \ref{gamform} we see that 
\[ \U_{a_r}\cdots \U_{a_2}\U_{a_1} = \U_x^{\langle y \rangle}. \]
for some $x,y \in J$.

\begin{prop}
\label{E6inv}
When $J$ is split all elements of order $2$ in $\Inv(J)$ correspond to an automorphism of $J^{\langle y \rangle}$ for some $y\in J$ with $\N(y) \neq 0$.
\end{prop}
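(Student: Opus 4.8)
The plan is to leverage the decomposition of $\phi$ already assembled before the statement. Since $J$ is split, Theorem~\ref{l2inv} gives $\Inv(J)=\Inv_2(J)\subseteq\Gamma_1(J)$, so an order-$2$ element $\phi$ is a product $\U_{a_r}\cdots\U_{a_1}$ with $\prod\N(a_i)=1$, and by Proposition~\ref{gamform} together with \eqref{isou} it rewrites as $\phi=\U_x^{\langle y\rangle}=\U_x\U_y$ for some $x,y$ with $\N(x)\N(y)\neq 0$. I would read $\U_x^{\langle y\rangle}$ as the $\U$-operator of $x$ computed inside the isotope $J':=J^{\langle y\rangle}$, so that the whole question takes place inside a single Albert algebra $J'$ which, by Corollary~\ref{topemorph}, is again split and isomorphic to $J$.

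The target reformulation is that an element of $\Inv(J)=\Inv(J')$ lies in $\Aut(J^{\langle y\rangle})$ exactly when it fixes the unit $y^{-1}$ of $J^{\langle y\rangle}$. Indeed $\Aut(J^{\langle y\rangle})$ sits inside the norm isometry group of the isotope, whose norm differs from $\N$ only by the scalar $\N(y)$, and it is precisely the stabiliser of the unit there: automorphisms fix $y^{-1}$, and conversely the stabiliser of a norm-one point has dimension $78-26=52=\dim\Aut(J)$ by the transitivity of $\Inv(J)$ on $W(K)$ recorded in the excerpt, so the connected stabiliser is this automorphism group of the appropriate isotope. Consequently the proposition is equivalent to the statement that every order-$2$ element of $\Inv(J)$ fixes an invertible element $z$; one then sets $y=z^{-1}$, so that $z=y^{-1}$ is the fixed unit of $J^{\langle y\rangle}$.

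First I would dispose of the generic case straight from the $\U$-operator form. Since $(\U_x^{\langle y\rangle})^2=\id$, applying the fundamental formula $\U_{\U_x^{\langle y\rangle}1'}=(\U_x^{\langle y\rangle})^2$ to the unit $1'=y^{-1}$ of $J'$ shows $\U_{x^{(2)}}^{\langle y\rangle}=\id$, where $x^{(2)}$ is the square of $x$ in $J'$; as only $\pm 1'$ have trivial $\U$-operator in a simple Albert algebra, this forces $x^{(2)}=\pm 1'$. When $x^{(2)}=1'$ the element $x$ is an involution of $J'$ and $\U_x^{\langle y\rangle}$ is outright an automorphism of $J^{\langle y\rangle}$ (for $s^2=1'$ one verifies $\U_s$ preserves the Jordan product), so the claim holds with this very $y$.

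The hard part is the remaining case $x^{(2)}=-1'$, where $\phi$ sends $1'\mapsto -1'$ and therefore cannot be an automorphism of $J^{\langle y\rangle}$ itself, so a different isotope must be produced. For this I would pass to the fixed space. Writing $J=J_{+}\oplus J_{-}$ for the $\pm 1$-eigenspaces of $\phi$ (char $k\neq 2$), invariance of $\N$ under $\phi$ annihilates every term of $\N$ of odd degree in $J_{-}$; in particular $\N|_{J_{-}}\equiv 0$ while $\N|_{J_{+}}$ is the pure cubic part, and it suffices to show $\N|_{J_{+}}\not\equiv 0$, since any invertible $z\in J_{+}$ delivers the isotope through $y=z^{-1}$. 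This nonvanishing, equivalently that the fixed space of an order-$2$ isometry is not totally singular for the cubic norm, is the crux: because $J_{-}$ already has large dimension a mere dimension bound does not suffice, and I would instead inspect the $27$-dimensional module, where for each isomorphism type the $(-1)$-eigenspace is the totally singular constituent (the half-spinor for the $D_5T_1$ type, the $(\overline{6},2)$ for the $A_5A_1$ type), forcing the surviving nondegenerate cubic onto $J_{+}$. Once a fixed invertible $z$ is in hand, the reformulation of the second paragraph closes the argument.
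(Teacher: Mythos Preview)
Your route diverges from the paper's. The paper does not use the unit-stabiliser characterisation of $\Aut(J^{\langle y\rangle})$ at all; instead it uses the description of $\Aut(J')$ as the fixed subgroup of the outer automorphism $\dagger'$ of $\Inv(J')$. After writing $\varphi=\U_x^{\langle y\rangle}=\U_x\U_y$ with $\N(x)\N(y)=1$, the paper computes $(\U_x^{\langle y\rangle})^{\dagger'}$ directly: by Lemma~\ref{outerdef} in the isotope and the fundamental formula,
\[
\U_x^{\langle y,\dagger\rangle}=\U_{x^{\langle y,-1\rangle}}^{\langle y\rangle}=\U_{\U_y^{-1}x^{-1}}\U_y=\U_{y^{-1}}\U_{x^{-1}}\U_{y^{-1}}\U_y=\U_{y^{-1}}\U_{x^{-1}}=(\U_x\U_y)^{-1}=\varphi^{-1}=\varphi,
\]
the last equality being the order-$2$ hypothesis. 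One line, no case split.

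Your argument is essentially sound through the point where you obtain $x^{(2)}=\pm 1'$, but the ``hard case'' $x^{(2)}=-1'$ is in fact empty, and this is where your proposal loses its footing. If $\phi(1')=-1'$ with $1'=y^{-1}$, then since $\N$ is a cubic form we have $\N(-y^{-1})=-\N(y^{-1})$, while $\phi\in\Inv(J)$ forces $\N(\phi(y^{-1}))=\N(y^{-1})$; as $y$ is invertible this gives $\N(y^{-1})=0$, a contradiction. So only $x^{(2)}=1'$ survives, and your first case already finishes the proof. The appeal to the $27$-dimensional module decomposition under the $D_5T_1$ and $A_5A_1$ centralisers is therefore unnecessary; as written it is also uncomfortably close to presupposing the involution classification one is ultimately after, and the assertion that the $(-1)$-eigenspace is ``the'' totally singular constituent would need independent justification. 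A second minor point: your dimension count only identifies the \emph{connected} stabiliser of $1'$ with $\Aut(J')$; you need the standard fact (implicit in \citep{sv00}) that a linear bijection preserving $\N$ and fixing the identity is already a Jordan automorphism, so that the full stabiliser equals $\Aut(J')$.

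In short: drop the module inspection, insert the one-line norm-parity observation that rules out $x^{(2)}=-1'$, and your unit-stabiliser argument becomes a valid alternative to the paper's $\dagger$-computation. The paper's approach has the advantage of avoiding any case analysis and of not needing the implication $\U_z=\id\Rightarrow z=\pm 1$.
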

\begin{proof}
For $J$ a split Albert algebra let $\varphi \in \Inv(J)$, and $\varphi^2 = \id$.  Then by \ref{gamform} $\varphi = \U_x^{\langle y \rangle}$ for some $x,y \in J$ and $\N(x)\N(y)=1$ by \ref{l2inv}.  In shifting from $J$ to its isotope $J^{\langle y \rangle}$ our outer automorphism shifts from $\U_x^{\dagger} = \U_{x^{-1}} \mapsto \U_x^{\langle y, \dagger \rangle} = \U_{x^{\langle y, -1 \rangle}}^{\langle y \rangle}$.  Notice that
\begin{align*}
\U_x^{\langle y, \dagger \rangle} &= \U_{x^{\langle y, -1 \rangle}}^{\langle y \rangle} \\
&= \U_{\U_y^{-1}x^{-1}}^{\langle y \rangle} \\
&=\U_{\U_y^{-1}x^{-1}} \U_y \\
&=\U_{y^{-1}} \U_{x^{-1}} \U_{y^{-1}} \U_y \\
&=\U_x \U_y \\
&= \U_x^{\langle y \rangle},
\end{align*}
and so $\U_x^{\langle y \rangle} \in \Aut(J^{\langle y \rangle})$.
\qed\end{proof}

In the following section we consider how $k$-involutions of type $\E_6$ correspond to subalgebras of Brown algebras.

\section{Isomorphism classes of $k$-involutions}

Let us first consider $k$-involutions that are inner involutions of an automorphism group of a $k$-algebra $\mathcal{A}$.  In this case the isomorphism class of a $k$-involution corresponds to an isomorphism class of a subalgebra of $\mathcal{A}$.  A version of the following Lemmas \ref{order2fixlem1} and \ref{order2fixlem2} appear in \citep{hu14} as a single result with a weaker hypothesis.  The proof appearing there is incorrect for the stated result.  We are thankful to the referee for pointing this out.  The main results that appear in \citep{hu14} follow from \ref{order2fixlem1} and \ref{order2fixlem2}.

\begin{lem}
\label{order2fixlem1}
Let $\mathcal{A}$ be an algebra.  If $t, t' \in \Aut(\mathcal{A})$ are $\Aut(\mathcal{A})$-conjugate, then their respective fixed point subalgebras $\mathcal{D}$,$\mathcal{D}' \subset \mathcal{A}$ are isomorphic.
\end{lem}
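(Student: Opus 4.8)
The plan is to use the conjugating automorphism itself as the desired isomorphism between the two fixed-point subalgebras. Write $t' = g t g^{-1}$ for some $g \in \Aut(\mathcal{A})$, and denote the fixed-point sets by $\mathcal{D} = \{ a \in \mathcal{A} \mid t(a) = a \}$ and $\mathcal{D}' = \{ a \in \mathcal{A} \mid t'(a) = a \}$. First I would record that $\mathcal{D}$ and $\mathcal{D}'$ are genuine subalgebras: since $t$ is an algebra automorphism, its fixed-point set is closed under the product (and, in the Brown-algebra setting, under the algebra involution $\bar{\ }$), so that the claim is about an isomorphism of algebras and not merely of vector spaces.

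The central step is to show that $g$ carries $\mathcal{D}$ onto $\mathcal{D}'$. For $a \in \mathcal{D}$, the one-line computation $t'(g(a)) = g t g^{-1} g(a) = g(t(a)) = g(a)$ shows $g(a) \in \mathcal{D}'$, so $g(\mathcal{D}) \subseteq \mathcal{D}'$. Running the same argument with $g^{-1}$ in place of $g$, using $t = g^{-1} t' g$, gives $g^{-1}(\mathcal{D}') \subseteq \mathcal{D}$, and hence $g(\mathcal{D}) = \mathcal{D}'$. Thus the restriction $g|_{\mathcal{D}} \colon \mathcal{D} \to \mathcal{D}'$ is a well-defined bijection.

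Finally I would observe that because $g$ is an algebra automorphism of $\mathcal{A}$, it respects addition, scalar multiplication, the product, and the algebra involution; hence its restriction to the subalgebra $\mathcal{D}$ is an algebra homomorphism, and being a bijection onto $\mathcal{D}'$ it is an algebra isomorphism. This yields $\mathcal{D} \cong \mathcal{D}'$, as required.

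There is no serious obstacle here: the argument is purely formal and relies only on $g$ being an automorphism together with the fact that conjugation carries fixed points to fixed points. The one point I would be sure to state explicitly — the only place requiring any care — is that the fixed-point set of an automorphism really is a subalgebra, so that the symbol $\cong$ may be read as an isomorphism of algebras (respecting the involution where one is present) rather than as a mere linear bijection.
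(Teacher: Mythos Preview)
Your argument is correct and essentially identical to the paper's: both use the conjugating element $g$ to show $g(\mathcal{D}) \subseteq \mathcal{D}'$ via the one-line computation, then reverse with $g^{-1}$ to obtain $g(\mathcal{D}) = \mathcal{D}'$. Your version is slightly more careful in making explicit that the fixed-point set is a subalgebra and that the restriction of $g$ is an algebra isomorphism, but the core idea is the same.
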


\begin{proof}
We start by assuming there exists a $g\in \Aut(\mathcal{A})$ such that $gt=t'g$.   Take $a\in \mathcal{D}$, the fixed point subalgebra of $\mathcal{A}$ with respect to $t$,
\begin{align*}
	gt(a) &= t'g(a) \\
	g(a) &= t'g(a), \\
\end{align*}
and $g(a) \in \mathcal{D}'$, the fixed point subalgebra with respect to $t'$.  Now we just reverse the argument using the fact that $g\in \Aut(\mathcal{A})$ and $tg^{-1} = g^{-1} t'$.  This shows us that $g^{-1}(a') \in \mathcal{D}'$ for all $a'\in \mathcal{D'}$, and so $g(\mathcal{D}) = \mathcal{D}'$.
\qed\end{proof}

Before we look at the partial converse we consider the following situation.

\begin{prop}
\label{order2decomp}
Let $\mathcal{A}$ is a $k$-algebra where $k$ is a field and $\ch(k) \neq 2$.  If $t\in \Aut(\mathcal{A})$, $t^2=\id$, $D$ is the subalgebra of $\mathcal{A}$ fixed by $t$, and
\[ \mathcal{A} = \mathcal{D} \oplus \mathcal{D}^{\perp}, \]
with respect to a nondegenerate bilinear form that is left invariant by $t$, then for $b \in D^{\perp}$ we have $t(b) = -b$.
\end{prop}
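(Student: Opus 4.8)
The plan is to identify $\mathcal{D}^{\perp}$ with the $(-1)$-eigenspace of $t$. Write $\langle\cdot,\cdot\rangle$ for the given nondegenerate, $t$-invariant bilinear form, so that $\langle t(x),t(y)\rangle=\langle x,y\rangle$ for all $x,y\in\mathcal{A}$, and recall that by hypothesis the sum $\mathcal{A}=\mathcal{D}\oplus\mathcal{D}^{\perp}$ is direct, i.e. $\mathcal{D}\cap\mathcal{D}^{\perp}=\{0\}$. Fixing $b\in\mathcal{D}^{\perp}$, I would show that the symmetrized element $b+t(b)$ must vanish, which immediately yields $t(b)=-b$.

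First I would check that $t$ stabilizes $\mathcal{D}^{\perp}$. Since $\mathcal{D}$ is the fixed subalgebra, $t(a)=a$ for every $a\in\mathcal{D}$; hence for $b\in\mathcal{D}^{\perp}$ and $a\in\mathcal{D}$, $t$-invariance gives $\langle t(b),a\rangle=\langle t(b),t(a)\rangle=\langle b,a\rangle=0$. As this holds for all $a\in\mathcal{D}$, we get $t(b)\in\mathcal{D}^{\perp}$, so $\mathcal{D}^{\perp}$ is $t$-stable.

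Next I would consider $b+t(b)$ for $b\in\mathcal{D}^{\perp}$. Because $t^{2}=\id$, this element is fixed by $t$ and therefore lies in $\mathcal{D}$. On the other hand $b\in\mathcal{D}^{\perp}$ and, by the previous step, $t(b)\in\mathcal{D}^{\perp}$, so $b+t(b)\in\mathcal{D}^{\perp}$ as well. The directness of the sum then forces $b+t(b)\in\mathcal{D}\cap\mathcal{D}^{\perp}=\{0\}$, whence $t(b)=-b$. Equivalently, one could use $\ch(k)\neq 2$ to split $\mathcal{A}=\mathcal{D}\oplus\mathcal{A}_{-1}$ into $t$-eigenspaces via $a=\tfrac12(a+t(a))+\tfrac12(a-t(a))$, verify $\mathcal{A}_{-1}\subseteq\mathcal{D}^{\perp}$ by the same $t$-invariance computation, and compare with $\mathcal{A}=\mathcal{D}\oplus\mathcal{D}^{\perp}$; but the direct argument above avoids even this.

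The argument is entirely elementary, so there is no serious obstacle; the two points that genuinely carry weight are that $\mathcal{D}^{\perp}$ is $t$-stable (which is where $t$-invariance of the form is used) and that the given decomposition is a \emph{direct} sum, so that $\mathcal{D}\cap\mathcal{D}^{\perp}=\{0\}$. This last point is exactly the content of the hypothesis: nondegeneracy of the form on $\mathcal{A}$ alone would not prevent the restriction of the form to $\mathcal{D}$ from being degenerate, in which case $\mathcal{D}$ and $\mathcal{D}^{\perp}$ could overlap. The role of $\ch(k)\neq 2$ is to keep the setup non-trivial, since otherwise $t(b)=-b=b$ would collapse $\mathcal{D}^{\perp}$ into $\mathcal{D}$.
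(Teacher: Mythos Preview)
Your proof is correct and follows essentially the same route as the paper: show that $b+t(b)$ lies in both $\mathcal{D}$ (because $t^{2}=\id$) and $\mathcal{D}^{\perp}$ (via $t$-invariance of the form and $t(a)=a$ for $a\in\mathcal{D}$), then use directness of the decomposition to conclude $b+t(b)=0$. The only cosmetic difference is that you isolate the $t$-stability of $\mathcal{D}^{\perp}$ as a separate preliminary step, whereas the paper folds that computation directly into the verification that $\langle a,\,b+t(b)\rangle=0$.
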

\begin{proof}
Let $\mathcal{A} = \mathcal{D} \oplus \mathcal{D}^{\perp}$ such that $\mathcal{D}$ is the fixed point subalgebra of $t \in \Aut(\mathcal{A})$ such that $t^2 =\id$.  Let $b\in D^{\perp}$
\[ t(b + t(b) ) = t(b) + b = b+t(b) \Rightarrow b + t(b) \in D. \]
Now if we choose $a \in \mathcal{D}$ we see that
\[ \langle a, b+t(b) \rangle = \langle a,b \rangle + \langle t(a),t(b) \rangle = 0, \]
since $t$ is an isometry, so $b + t(b) \in \mathcal{D}^{\perp}$.  We have $b+t(b) \in \mathcal{D} \cap \mathcal{D}^{\perp} = \{ 0 \} \Rightarrow t(b) = -b$.
\qed\end{proof}

The decomposition in the above proposition is the same as having a $\mathbb{Z}_2$-grading on $\mathcal{A}$ induced by an automorphism.  The connection between $k$-involutions and $\mathbb{Z}_2$-gradings are discussed further in section $7$.  For the partial converse of \ref{order2fixlem2} we need a stronger hypothesis.  In the following Lemma we assume that $\mathcal{A}$ is an algebra over a field of characteristic not $2$.  A similar result may exist with a less confining hypothesis, but for us this is enough.

\begin{lem}
\label{order2fixlem2}
Let $\mathcal{A}$ be a $k$-algebra with a nonsingular bilinear form where $k$ is a field not of characteristic $2$.  Assume $t,t' \in \Aut(\mathcal{A})$ are elements of order $2$ that have fixed point subalgebras $\mathcal{D}, \mathcal{D}'$ respectively, and $g(\mathcal{D})=\mathcal{D}'$ for $g \in \Aut(\mathcal{A})$.  If $g$ leaves the bilinear form on $\mathcal{A}$ invariant and
\[\mathcal{A}=\mathcal{D} \oplus \mathcal{D}^{\perp} = \mathcal{D}' \oplus\mathcal{D}'^{\perp}. \]
Then $t$ and $t'$ are $\Aut(\mathcal{A})$-conjugate.
\end{lem}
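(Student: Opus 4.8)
The goal is the converse direction of Lemma~\ref{order2fixlem1}: given that the fixed-point subalgebras are conjugate via a form-preserving $g$, I want to upgrade this to conjugacy of the involutions $t,t'$ themselves. The plan is to show that $g t g^{-1} = t'$ directly, by checking the two maps agree on each summand of the decomposition $\mathcal{A} = \mathcal{D}' \oplus \mathcal{D}'^{\perp}$. Since $\mathcal{A}$ is a direct sum of these two pieces and both $g t g^{-1}$ and $t'$ are linear, it suffices to verify the equality separately on $\mathcal{D}'$ and on $\mathcal{D}'^{\perp}$.

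**The two summands.** On $\mathcal{D}'$ the argument is essentially formal. By hypothesis $g(\mathcal{D}) = \mathcal{D}'$, so every $a' \in \mathcal{D}'$ is $g(a)$ for some $a \in \mathcal{D}$; then $g t g^{-1}(a') = g t(a) = g(a) = a' = t'(a')$, using $t(a)=a$ and $t'(a')=a'$. The real content is on $\mathcal{D}'^{\perp}$, and here is where Proposition~\ref{order2decomp} does the work. That proposition tells me that on the orthogonal complement of a fixed-point subalgebra, an order-$2$ isometric automorphism acts as $-\id$; this applies to $t$ on $\mathcal{D}^{\perp}$ and to $t'$ on $\mathcal{D}'^{\perp}$. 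The first step I would need is that $g$ carries $\mathcal{D}^{\perp}$ onto $\mathcal{D}'^{\perp}$: since $g$ preserves the bilinear form and $g(\mathcal{D}) = \mathcal{D}'$, orthogonality is preserved, so $g(\mathcal{D}^{\perp}) = g(\mathcal{D})^{\perp} = \mathcal{D}'^{\perp}$. Then for $b' \in \mathcal{D}'^{\perp}$, write $b' = g(b)$ with $b \in \mathcal{D}^{\perp}$, and compute $g t g^{-1}(b') = g t(b) = g(-b) = -b' = t'(b')$, where the middle equality uses $t(b) = -b$ and the last uses $t'(b') = -b'$, both furnished by Proposition~\ref{order2decomp}.

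**Assembling the conclusion.** Having matched $g t g^{-1}$ and $t'$ on both $\mathcal{D}'$ and $\mathcal{D}'^{\perp}$, linearity and the direct-sum decomposition give $g t g^{-1} = t'$ on all of $\mathcal{A}$, which is exactly the statement that $t$ and $t'$ are $\Aut(\mathcal{A})$-conjugate. The characteristic-not-$2$ hypothesis is needed precisely so that Proposition~\ref{order2decomp} is available (the decomposition into $\pm 1$ eigenspaces requires inverting $2$), and the nonsingularity of the form guarantees that $\mathcal{D}^{\perp}$ is a genuine complement and that $g$ preserving the form forces $g(\mathcal{D}^{\perp}) = g(\mathcal{D})^{\perp}$.

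**Anticipated obstacle.** I expect the only delicate point to be verifying that $g$ really does send $\mathcal{D}^{\perp}$ to $\mathcal{D}'^{\perp}$ rather than to some other complement of $\mathcal{D}'$. This is where invariance of the form under $g$ is essential, and it is the hinge on which the whole argument turns: without it, $g t g^{-1}$ could agree with $t'$ on $\mathcal{D}'$ yet differ on the complement. Everything else is a short formal computation once Proposition~\ref{order2decomp} is invoked to pin down the action on the complements as multiplication by $-1$.
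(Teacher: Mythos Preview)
Your proposal is correct and follows essentially the same approach as the paper: both arguments use that $g$ preserves the form to get $g(\mathcal{D}^{\perp})=\mathcal{D}'^{\perp}$, then invoke Proposition~\ref{order2decomp} to identify the action of $t$ and $t'$ on the orthogonal complements as $-\id$, and conclude $t'g=gt$. The only cosmetic difference is that the paper checks $t'g(a+b)=gt(a+b)$ in one line for a general element $a+b\in\mathcal{D}\oplus\mathcal{D}^{\perp}$, whereas you verify $gtg^{-1}=t'$ separately on $\mathcal{D}'$ and $\mathcal{D}'^{\perp}$.
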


\begin{proof}
Let $\mathcal{D},\mathcal{D}' \subset \mathcal{A}$ such that $\mathcal{D}$ and $\mathcal{D}'$ are the fixed point subalgebras of $t, t' \in \Aut(\mathcal{A})$ respectively, and $t^2 = t'^2 = \id$.  Let $g\in \Aut(\mathcal{A})$ be such that $g(\mathcal{D})=\mathcal{D}'$.  Suppose $a \in \mathcal{D}$ and $b\in \mathcal{D}^{\perp}$.  We assume that $\mathcal{A}$ has the following decomposition, $\mathcal{A}=\mathcal{D} \oplus \mathcal{D}^{\perp} = \mathcal{D}' \oplus\mathcal{D}'^{\perp}$.
Notice $g(\mathcal{D}'^{\perp}) = \mathcal{D}^{\perp}$, and we take $a \in \mathcal{D}$ and $b \in \mathcal{D}^{\perp}$, then by \ref{order2decomp} we have
\[ t'g(a+b) = g(a) - g(b) = g(a-b) = gt(a+b). \]
\qed\end{proof}

In \citep{hu12} we saw that automorphisms of order $2$ of $\Aut(C)$, a split group of type $\G_2$, were determined by the isomorphism class of a quaternion subalgebra of the split octonion algebra $C$.  We can construct a $k$-split maximal torus of $\Aut(C)$ as being of the form

\[ T_C = \left\{ t_{(\eta,\nu)}=\diag(1,\eta \nu, \eta^{-1} \nu^{-1}, 1, \nu^{-1}, \eta, \eta^{-1}, \nu ) \ \big| \ \eta, \nu \in k^* \right\}. \]

Next we recall results from split groups of type $\G_2$ and $\F_4$ as they extend to our group of type $\E_6$.  Recall that the automorphism group of $\Aut(C)$ and $\Aut(J)$, where $C$ is a composition algebra and $J$ is an Albert algebra, consists of inner automorphisms only.

Then there exists $\theta$ and a maximal $k$-split torus in $\Aut(C)$ such that $T_C = \left(T_C \right)^-_{\theta}$, details of which are found in \citep{hu12}.  Specifically $\theta = \I_{t_*}$ where
\[ t_*= \begin{bmatrix}
\cdot & \cdot & \cdot & 1 \\
\cdot & \cdot & 1 & \cdot\\
\cdot& 1 & \cdot & \cdot \\
1 & \cdot & \cdot & \cdot
\end{bmatrix}
\bigoplus
\begin{bmatrix}
\cdot & \cdot & \cdot & 1 \\
\cdot & \cdot & 1 & \cdot \\
\cdot & 1 & \cdot & \cdot \\
1 & \cdot & \cdot & \cdot
\end{bmatrix}.
\]
The $k$-inner elements can all be found in $T$, and $t_*t$ fixes a quaternion algebra over the given field.  The number of these isomorphism classes and the elements of $T_C$ which are induced by automorphisms of $C$ that fix these quaternion subalgebras depend on $k$.

\begin{thm}
\label{G2res}
The classification of $k$-involutions of $\Aut(C)$ for some specific fields is as follows.
\begin{enumerate}[$(1)$]
\item When $k=K$ and $\mathbb{F}_p$ with $p>2$, there is only one isomorphism class of $k$-involutions of $\Aut(C)$ with $k$-involution $\theta$.
\item When $k=\mathbb{R}$ and $\mathbb{Q}_2$,  $\theta$ and $\theta \mathcal{I}_{ t{(1,-1)} }$ are representatives of the two isomorphism classes of $k$-involutions of $G=\Aut(C)$.
\item When $k=\mathbb{Q}_p$ with $p> 2$, $\theta$ and $\theta \mathcal{I}_{t(-Z_p,-pZ_p^{-1})}$ give us the two isomorphism classes.
\item When $k=\mathbb{Q}$, $\theta$ fixes a split quaternion algebra.  However, there are an infinite number of isomorphism classes of quaternion division subalgebras, and so an infinite number of classes of $k$-involutions, (one infinite collection is given by $D \cong \left(\frac{-1,p}{\mathbb{Q}} \right)$ and fixed by $\theta \I_{t{(p,1)}}$ when $p$ is a distinct prime such that $p \equiv 3 \mod 4$).
\end{enumerate}
\end{thm}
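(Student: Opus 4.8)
The plan is to reduce the entire classification to counting isomorphism classes of quaternion subalgebras of the fixed split octonion algebra $C$, and then to perform that count field by field. Since $\Aut(C)$ has only inner automorphisms, every $k$-involution is conjugation $\I_g$ by an order-$2$ automorphism $g$ of the octonion algebra $C$; following \citep{hu12} one may take $g = t_* t$ with $t \in (T_C)^-_\theta$, and this $g$ fixes a quaternion subalgebra $D \subseteq C$ elementwise. Lemma \ref{order2fixlem1} shows that $\Aut(C)$-conjugate involutions have isomorphic fixed-point subalgebras, and Lemma \ref{order2fixlem2} supplies the converse once the octonion norm is used as the invariant nonsingular form producing the orthogonal decomposition $C = D \oplus D^{\perp}$. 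Hence sending $\I_{t_* t}$ to the isomorphism class of $D$ gives a bijection from isomorphism classes of $k$-involutions of $\Aut(C)$ onto isomorphism classes of quaternion subalgebras of $C$.

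Next I would show that the quaternion subalgebras of the split octonion algebra realize \emph{every} isomorphism class of quaternion $k$-algebra. Given any quaternion algebra $D$ over $k$ with norm $\q_D$, form the Cayley--Dickson double with parameter $\kappa = 1$: its norm is $\q_D \perp (-\q_D)$, which is hyperbolic, so the resulting octonion algebra is split. As all split octonion algebras over $k$ are isomorphic, $D$ embeds into $C$. Thus the classification of $k$-involutions coincides with the classification of quaternion algebras over $k$, equivalently with the $2$-torsion of the Brauer group $\mathrm{Br}(k)$, the split class corresponding to $\theta$ itself.

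The per-field tally then follows from standard facts. For $k = K$ algebraically closed and $k = \mathbb{F}_p$ with $p > 2$ the Brauer group is trivial (Wedderburn's theorem in the finite case), so the split quaternion algebra is the only one and $\theta$ represents the unique class, giving $(1)$. For $k = \mathbb{R}$ and for every $p$-adic field $\mathbb{Q}_p$, including $\mathbb{Q}_2$, the $2$-torsion of the Brauer group is $\mathbb{Z}/2$, so there are exactly two classes, namely the split algebra and a unique division quaternion algebra, yielding $(2)$ and $(3)$. For $k = \mathbb{Q}$ the Albert--Brauer--Hasse--Noether classification presents quaternion algebras by their finite even-cardinality ramification sets, of which there are infinitely many; the algebras $\left(\frac{-1,p}{\mathbb{Q}}\right)$ for distinct primes $p \equiv 3 \pmod 4$ are pairwise non-isomorphic division algebras, since they ramify at distinct primes $p$, producing the infinite family in $(4)$.

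The main obstacle is the final verification: matching each listed torus element to the asserted Brauer class rather than merely counting classes. For this I would compute the restriction of the octonion norm to the quaternion subalgebra fixed by $t_* t_{(\eta,\nu)}$ for the specific values $(\eta,\nu) = (1,-1)$, $(-Z_p, -pZ_p^{-1})$, and $(p,1)$, and read off the isomorphism type of that subalgebra from its norm form via the relevant Hilbert symbols. The delicate points are confirming that the fixed subspace is genuinely a four-dimensional quaternion subalgebra and that its norm form represents the nonsplit class in each case, for instance that $t_{(1,-1)}$ yields Hamilton's quaternions over $\mathbb{R}$ and that $t_{(-Z_p,-pZ_p^{-1})}$ yields the unique division algebra over $\mathbb{Q}_p$; these concrete computations are what pin down the explicit representatives.
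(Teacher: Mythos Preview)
The paper does not supply a proof of this theorem: it is stated as a recall of results from \citep{hu12}, with only the setup (the torus $T_C$ and the element $t_*$) given in the surrounding text. So there is no in-paper proof to compare against directly; your outline is consistent with the framework the paper sets up, namely Lemmas~\ref{order2fixlem1} and~\ref{order2fixlem2} together with the correspondence between order-$2$ automorphisms of $C$ and quaternion subalgebras.

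There is one step you are gliding over. Lemma~\ref{order2fixlem2} only gives the converse when the isomorphism between the fixed subalgebras $\mathcal{D}$ and $\mathcal{D}'$ is realized by some $g \in \Aut(C)$, not merely when $\mathcal{D} \cong \mathcal{D}'$ abstractly. To pass from ``$D$ and $D'$ are isomorphic quaternion $k$-algebras'' to ``there exists $g \in \Aut(C)$ with $g(D)=D'$'' you need the extension property for composition subalgebras: any isomorphism between nondegenerate composition subalgebras of $C$ extends to an automorphism of $C$. This is standard (it follows from Witt's theorem applied to the norm form together with the Cayley--Dickson description $C \cong D \oplus D\ell$, and is in \citep{sv00}), but it should be named explicitly, since Lemma~\ref{order2fixlem2} alone does not give it.

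Your embedding argument (doubling $D$ with $\kappa=1$ gives $\q_D \perp (-\q_D)$, hence a hyperbolic norm, hence the split octonion algebra) and the field-by-field Brauer-group count are both correct. The explicit identification of the torus parameters with the named quaternion classes is indeed a separate computation, as you say; that is exactly what is carried out in \citep{hu12}, and the paper here simply imports the answers.
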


We think of a split group of type $\F_4$ as an automorphism group of an Albert algebra $J$.  When $J$ is split we can make the identification of the Albert algebra with the $3 \times 3$, $\gamma$-hermitian matrices over $C$, $\Her_3(C,\gamma)$, where $C$ a split octonion algebra.  We can actually do this any time the algebra is reduced, but here we only consider the split case.  There are two main types of subalgebras of $J$ fixed by an element of order $2$ in $\Aut(J)$.  Type (I) fixes a subalgebra isomorphic to $\Her_3(D,\gamma')$ where $D$ is a quaternion subalgebra of $C$, and type (II) fixes an $11$ dimensional quadratic subalgebra.  We follow \citep{ja68} and refer to them as type (I) and type (II) respectively.

Up to $\gamma$ the $k$-involutions of type (I) are isomorphic to $\hat{t}\in \Aut(J)$, where $t \in \Aut(C)$ and
\[ \hat{t}(\xi_1,\xi_2,\xi_3;a,b,c) = (\xi_1,\xi_2,\xi_3;t(a),t(b),t(c)), \]
as pointed out in \ref{g2f4}.  We can construct a $k$-split maximal torus in a way similar to our type $\E_6$ construction.  We take the action of $\varphi(u,u,v)$ on the first Tits construction of the split Albert algebra, and all of the isomorphism classes of the $k$-inner elements are determined by the isomorphism classes of $D$ and $\Her_3(D,\gamma')$.  

\begin{thm}
\label{f4expl}
For the following fields we can take as representatives of isomorphism classes of $k$-involutions of $\Aut(J)$ to be of the form $\theta \I_{\hat{t}}$ where $\theta$ is as in \ref{G2res}, and $t=t(u_1,u_2,v_1,v_2)$
\begin{enumerate}[$1.$]
\item $k=K$ or $k=\mathbb{F}_p$ where $p>2$, $t=t(1,1,1,1)$ is a representative of the only isomorphism class,
\item $k=\mathbb{R}$ for $D$ split we can choose $t(1,1,-1,1)$, for the positive definite case we can choose $t(1,1,1,1)$, and for the indeterminate quadratic form we can choose $t(-1,1,1,1)$,
\item $k=\mathbb{Q}_2$ we can choose $t(1,1,-1,1)$ for the split case and $t(1,1,1,1)$ for $D$ a division algebra,
\item $k=\mathbb{Q}_p$ with $p>2$ we can choose $t(1,1,-1,1)$ for $D$ split, and $t(1,1,-p,-Z_p)$ for $D$ a division algebra.
\end{enumerate}
\end{thm}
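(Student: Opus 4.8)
The plan is to convert the classification into the arithmetic of quaternion algebras and their associated Hermitian forms, using the conjugacy-to-fixed-subalgebra dictionary. Because $\Aut(J)$ preserves the nondegenerate bilinear form attached to $Q(j)=\tfrac12\Tr(j^2)$, Lemmas \ref{order2fixlem1} and \ref{order2fixlem2} together say that two type (I) involutions are $\Aut(J)$-conjugate exactly when their fixed subalgebras are isomorphic. By Proposition \ref{g2f4} each type (I) involution is $\hat t$ for some $t\in\Aut(C)$ fixing a quaternion subalgebra $D\subset C$, with fixed subalgebra $\Her_3(D,\gamma')$; precomposing with the base involution $\theta$ of Theorem \ref{G2res} and conjugating places the $k$-inner part in the maximal $k$-split torus obtained from the action of $\varphi(u,u,v)$ on the first Tits construction. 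Every such class is thereby represented by some $\theta\I_{\hat t}$ with $t=t(u_1,u_2,v_1,v_2)$ in that torus.

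I would then invoke the isomorphism criterion for reduced $\mathcal{J}$-algebras from Section $2$: such algebras are isomorphic if and only if their quadratic forms are equivalent. Hence the isomorphism class of the fixed algebra $\Her_3(D,\gamma')$, and so of the involution, is determined by the equivalence class of its quadratic form $Q$, which is controlled by the isomorphism class of $D$ together with the class of the diagonal parameter $\gamma'$ defining the Hermitian structure. The problem thus reduces to two finite enumerations over each field: the quaternion algebras $D$ over $k$, and, for each $D$, the inequivalent forms $\Her_3(D,\gamma')$; for each resulting class one must display a torus element realizing it. (The type (II) classes, fixing the $11$-dimensional quadratic subalgebra $\beth$, are handled by the parallel computation with the corresponding quadratic form.)

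The four fields are then settled by standard arithmetic of quadratic and quaternion algebras. For $k=K$ or $\mathbb{F}_p$ with $p>2$ the only quaternion algebra is split and the relevant nondegenerate form is unique, giving the single representative $t(1,1,1,1)$. Over $\mathbb{R}$ the quaternion algebras are $\Mat_2(\mathbb{R})$ and the Hamilton quaternions; the split case yields a hyperbolic Hermitian form, while for the division algebra the signature of $\Her_3(D,\gamma')$ varies with the signs in $\gamma'$, separating the positive-definite and indeterminate forms and producing the three choices $t(1,1,-1,1)$, $t(1,1,1,1)$, and $t(-1,1,1,1)$. Over $\mathbb{Q}_2$ and $\mathbb{Q}_p$ with $p>2$ there are exactly two quaternion algebras, the split one and the unique division algebra, distinguished locally by their Hermitian forms; the displayed parameters (with the $p$- and $Z_p$-dependent entries recording the norm class presenting the division algebra) give one representative for each.

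The main obstacle I anticipate is making the second paragraph explicit: starting from the action of $\varphi(u,u,v)$ and the first Tits construction formulas for $\N$ and $a^{\#}$, one must compute the quadratic form carried by the fixed subalgebra $\Her_3(D,\gamma')$ as an explicit function of $(u_1,u_2,v_1,v_2)$, diagonalize it over each field, and verify both that the listed parameter choices exhaust every equivalence class and that no two of them coincide. This is where the reduction from the abstract $\G_2$--$\F_4$ correspondence of Theorem \ref{G2res} to concrete diagonal matrices does its work, and the computation, though routine in principle, must be carried out carefully field by field.
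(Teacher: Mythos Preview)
Your outline is sound, but note that the paper does not actually prove Theorem \ref{f4expl} here: it is recalled from the author's earlier $\F_4$ paper \citep{hu14}, as the preamble ``Next we recall results from split groups of type $\G_2$ and $\F_4$'' indicates, and no proof environment follows the statement. So there is no in-paper argument to compare against in the strict sense.

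That said, your strategy is exactly the one that underlies the cited result and is consistent with the surrounding text. The paper explicitly sets up the same dictionary: Lemmas \ref{order2fixlem1} and \ref{order2fixlem2} reduce conjugacy of order-$2$ automorphisms to isomorphism of fixed subalgebras; the reduced $\mathcal{J}$-algebra criterion (isomorphism $\Leftrightarrow$ equivalent quadratic forms) then reduces the type (I) case to the arithmetic of $\Her_3(D,\gamma')$; and the torus built from $\varphi(u,u,v)$ on the first Tits construction is precisely where the paper says the $k$-inner elements live. Your field-by-field enumeration of quaternion algebras and Hermitian forms matches Corollary \ref{F4res}, which the paper states immediately after Theorem \ref{f4expl} as a summary of the same classification. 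The one thing to be careful about is scope: Theorem \ref{f4expl} as stated only lists type (I) representatives $\theta\I_{\hat t}$, so your parenthetical about type (II) is extra (and handled separately in the paper via the representative $\U_{s_{23}}$ and the remark that there is a single class for split $J$).
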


\begin{cor}
\label{F4res}
Let $C$ be a split octonion algebra with quadratic form $\q$ and $D$ a quaternion subalgebra with quadratic form $\q_D$.  Then regarding $k$-involutions of type (I),
\begin{enumerate}[$1.$]
\item if $k=K$ is algebraically closed there is one isomorphism class of the form $\Her_3(D,\gamma)$,
\item if $k=\mathbb{F}_p$ where $p$ is not even there is one isomorphism class of algebras of the form $\Her_3(D,\gamma)$,
\item if $k=\mathbb{R}$ there are $3$ isomorphism classes of algebras; one of the form $\Her_3(D,\gamma)$ corresponding to $D$ being split,  one when $D$ is a division algebra with $\gamma=\id$, and one when $D$ is a division algebra with $\gamma=(-1,1,1)$,
\item if $k=\mathbb{Q}_p$ there are $2$ isomorphism classes of algebras of the form $\Her_3(D,\gamma)$ corresponding to $D$ being split or $D$ being a division algebra.
\item if $k=\mathbb{Q}$ there are an infinite number of isomorphism classes.
\end{enumerate}
Regarding $k$-involutions of type (II) for a split Albert algebra, there is only one isomorphism class for a given field \citep{ja68}. 
\end{cor}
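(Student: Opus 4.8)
The plan is to reduce the statement to counting equivalence classes of the trace quadratic form on $\Her_3(D,\gamma)$, and then to carry out that count field by field. By Lemmas \ref{order2fixlem1} and \ref{order2fixlem2} the isomorphism classes of type (I) $k$-involutions of $\Aut(J)$ are in bijection with the isomorphism classes of their fixed subalgebras, each of the form $\Her_3(D,\gamma)$ for a quaternion subalgebra $D\subset C$, with representatives furnished by Theorem \ref{f4expl} and the embedding of Proposition \ref{g2f4}. Since $\Her_3(D,\gamma)$ is a reduced $\mathcal{J}$-algebra, the isomorphism criterion for reduced $\mathcal{J}$-algebras reduces $\Her_3(D,\gamma)\cong\Her_3(D',\gamma')$ to equivalence of the quadratic forms $Q=\tfrac12\Tr\big((\,\cdot\,)^2\big)$. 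First I would read off $Q$ from the $\gamma$-Hermitian matrix form, obtaining the orthogonal decomposition
\[ Q\;\cong\;\langle 1,1,1\rangle\;\perp\;\langle\mu_1,\mu_2,\mu_3\rangle\otimes\q_D, \]
where $\q_D$ is the four-dimensional norm form of $D$ and $\mu_1,\mu_2,\mu_3$ are monomials in the $\gamma_i^{\pm1}$ attached to the three off-diagonal blocks; the $\gamma$-Hermitian structure forces $\mu_1\mu_2\mu_3$ to be a square, which constrains the admissible scalar patterns.

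The second step is to exploit that $\q_D$ is a two-fold Pfister form and hence round, so that its group of similarity factors is exactly the group $\mathrm{Nrd}(D^*)$ of nonzero reduced norms and $\langle s\rangle\otimes\q_D\cong\q_D$ for every $s\in\mathrm{Nrd}(D^*)$. This yields a clean dichotomy. When $D$ is split, $\q_D$ is hyperbolic, every scalar is a similarity factor, and the off-diagonal part of $Q$ is hyperbolic independently of $\gamma$; hence split $D$ contributes exactly one isomorphism class over every field. When $D$ is a division algebra, $\q_D$ is anisotropic and the number of classes is governed by the image of $(\mu_1,\mu_2,\mu_3)$ in $\big(k^*/\mathrm{Nrd}(D^*)\big)^3$, taken modulo the product-is-a-square relation and permutation of the three blocks.

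With this dichotomy the counts follow. Over an algebraically closed $K$ and over $\mathbb{F}_p$, Wedderburn's theorem forbids quaternion division algebras, so only split $D$ occurs and there is one class. Over $\mathbb{Q}_p$ the reduced norm of a quaternion division algebra is surjective onto $k^*$, so $k^*/\mathrm{Nrd}(D^*)$ is trivial, the division case also collapses to a single class, and there are two classes in total. Over $\mathbb{R}$ one has $\mathrm{Nrd}(D^*)=\mathbb{R}_{>0}$, so $k^*/\mathrm{Nrd}(D^*)=\{\pm1\}$; the product-is-a-square relation permits only an even number of negative signs, leaving the patterns $(+,+,+)$ and $(+,-,-)$ up to permutation, which are precisely the positive-definite $\gamma=\id$ case and the $\gamma=(-1,1,1)$ case, giving three classes together with the split one. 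Over $\mathbb{Q}$ there are infinitely many quaternion division algebras with pairwise inequivalent norm forms, hence infinitely many classes. The type (II) statement is quoted from \citep{ja68}.

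I expect the main obstacle to be the quadratic-form bookkeeping in the division case: identifying the similarity factors of $\q_D$ with $\mathrm{Nrd}(D^*)$, tracking the product-is-a-square constraint together with the block permutations, and invoking the local surjectivity of the reduced norm over $\mathbb{Q}_p$ and the signature computation over $\mathbb{R}$. Verifying that the explicit representatives $t(u_1,u_2,v_1,v_2)$ of Theorem \ref{f4expl} correspond to exactly these algebras $\Her_3(D,\gamma)$ and to no others is the step that requires the most care.
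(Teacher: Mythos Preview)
The paper gives no proof for this corollary; it is stated as an immediate consequence of Theorem~\ref{f4expl} together with the classifications already carried out in \citep{ja58,ja68,sv00,hu14}. Your proposal is correct and is essentially the argument that those cited references contain: reduce the isomorphism question for $\Her_3(D,\gamma)$ to equivalence of the trace form via the reduced $\mathcal{J}$-algebra criterion, then exploit that $\q_D$ is a round $2$-fold Pfister form so that the off-diagonal block $\langle\mu_1,\mu_2,\mu_3\rangle\otimes\q_D$ is controlled by $k^*/\mathrm{Nrd}(D^*)$ together with the constraint $\mu_1\mu_2\mu_3\in k^{*2}$.

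Two small points deserve a line of justification when you write this up. First, your bijection between involution classes and abstract isomorphism classes of fixed subalgebras uses more than Lemmas~\ref{order2fixlem1} and~\ref{order2fixlem2}: those lemmas only give the bijection with $\Aut(J)$-orbits of subalgebras, so you also need that every abstract isomorphism $\Her_3(D,\gamma)\to\Her_3(D',\gamma')$ between subalgebras of the split $J$ extends to an element of $\Aut(J)$; this extension property is standard (and is implicit in Theorem~\ref{f4expl}), but should be cited. Second, over $\mathbb{Q}$ you should note not only that there are infinitely many non-isomorphic division $D$, but that distinct $\q_D$ actually produce inequivalent $Q$; a local comparison at a place where exactly one of the two algebras ramifies does this immediately. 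With these two remarks your sketch is complete and matches what the paper outsources to its references.
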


The class of $k$-involutions of type (II) have as a representative $\U_{s_{23}} = \I_{s_{23}}$, where
\[ s_{23} = \begin{bmatrix}
			1 & \cdot & \cdot \\
			\cdot & \cdot & 1 \\
			\cdot & 1 & \cdot \\
			\end{bmatrix}, \]
when considering the Albert algebra as having the form $\Her_3(C,\id)$.  There is a deeper discussion and proof of this result included in \citep{hu14}.  Now we can notice that
\[  \theta \varphi^{\dagger}(u,v,w) = \varphi(u^{-1},v^{-1},w^{-1}) = \left(\varphi(u,v,w)\right)^{-1}, \]
where $\theta$ is of the same form as in \ref{G2res}.  We have shown the following result.

\begin{prop}
\label{e6kinvtorus}
There is a $k$-split maximal torus $T \subset \Aut^+(B,-)$ such that $T$ is a maximal $(\theta \mathcal{I}_{\varpi},k)$-split torus, i.e. $T_{\theta\mathcal{I}_{\varpi}}^-=T$.
\end{prop}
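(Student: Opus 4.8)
The plan is to produce $T$ explicitly as the image of the standard maximal torus of $\SL_3 \times \SL_3 \times \SL_3$ under the embedding $\varphi$ into $\Inv\big(J(\Mat_3(k),1)\big)$, push it into $\Aut^+(B,-)$ via $\phi \mapsto \hat{\phi}$, and then verify by direct computation that $\theta\mathcal{I}_{\varpi}$ inverts every element of $T$. First I would let $S_i \subset \SL_3$ ($i=1,2,3$) be the diagonal maximal torus of each factor and set $T = \widehat{\varphi(S_1 \times S_2 \times S_3)}$. Each $S_i$ is $k$-split of rank $2$, so $T$ is $k$-split of rank $6$; since a group of type $\E_6$ has rank $6$ and the $\hat{\ }$-embedding carries a maximal torus of $\Inv(J)$ to a maximal torus of $\Aut^+(B,-)$ (both groups being of type $\E_6$), $T$ is a maximal $k$-split torus of $\Aut^+(B,-)$.

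Next I would compute the action of $\theta\mathcal{I}_{\varpi}$ on a typical element $\varphi(u,v,w) \in T$. Because $\mathcal{I}_{\varpi}$ restricts to the outer automorphism $\dagger$ on $\Inv(J)$ (we saw $\varpi\hat{\phi}\varpi = \widehat{\phi^{\dagger}}$ and $\varpi^{-1}=\varpi$), we have $\mathcal{I}_{\varpi}\big(\varphi(u,v,w)\big) = \varphi^{\dagger}(u,v,w) = \varphi(v,u,w)$. Applying $\theta$ and invoking the identity $\theta\varphi^{\dagger}(u,v,w) = \varphi(u^{-1},v^{-1},w^{-1})$ established just above, together with the fact that $\varphi$ is a homomorphism (so $\varphi(u,v,w)^{-1} = \varphi(u^{-1},v^{-1},w^{-1})$), I obtain
\[ (\theta\mathcal{I}_{\varpi})\big(\varphi(u,v,w)\big) = \varphi(u,v,w)^{-1}. \]
Hence $\varphi(u,v,w) \in T_{\theta\mathcal{I}_{\varpi}}^-$ for every $(u,v,w)$, giving $T \subseteq T_{\theta\mathcal{I}_{\varpi}}^-$; the reverse inclusion holds by definition, so $T = T_{\theta\mathcal{I}_{\varpi}}^-$. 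Since $T$ is already a maximal $k$-split torus and coincides with its own $\theta\mathcal{I}_{\varpi}$-split part, no larger $(\theta\mathcal{I}_{\varpi},k)$-split torus can exist, and $T$ is a maximal $(\theta\mathcal{I}_{\varpi},k)$-split torus.

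The main obstacle I anticipate is not the torus inversion itself, which is the displayed line-by-line computation, but rather justifying that $\theta\mathcal{I}_{\varpi}$ is a genuine $k$-involution of $\Aut^+(B,-)$ and that $\theta$ has precisely the form needed. Conceptually this is where the outer piece $\varpi$ is essential: for type $\E_6$ the map $t \mapsto t^{-1}$ on a maximal torus is not realized by the Weyl group alone (the longest element $w_0$ acts as $-\sigma$ for the nontrivial diagram automorphism $\sigma$), so inversion requires supplementing the inner involution $\theta$ from \ref{G2res} with the diagram automorphism supplied by $\mathcal{I}_{\varpi}$. I would therefore make explicit that $(\theta\mathcal{I}_{\varpi})^2$ centralizes $T$, and since $T$ is a maximal torus this forces $(\theta\mathcal{I}_{\varpi})^2 = \id$, confirming that $\theta\mathcal{I}_{\varpi}$ is an involution; the remaining care is to check that $\theta$ and the embedded $\varphi$ are both defined over $k$ so that the resulting involution is a $k$-involution.
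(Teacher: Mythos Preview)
Your proposal is correct and follows essentially the same route as the paper: the paper's argument is precisely the displayed identity $\theta\varphi^{\dagger}(u,v,w) = \varphi(u^{-1},v^{-1},w^{-1}) = \varphi(u,v,w)^{-1}$ appearing immediately before the proposition, using the same $\SL_3^{\times 3}$-torus and the identification of $\mathcal{I}_{\varpi}$ with $\dagger$. One small caution in your ancillary remarks: the step ``$(\theta\mathcal{I}_{\varpi})^2$ centralizes $T$, hence equals $\id$'' is not quite sufficient as stated (an inner automorphism centralizing a maximal torus only forces the conjugating element into $T$, not into the center), though the paper does not address this point either and it is peripheral to the torus-inversion computation that constitutes the proof.
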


Let us consider the subalgebras of $B$ fixed by elements of order $2$ in $\Aut^+(B,-)$ up to isomorphism.  All elements of order $2$ in the automorphism group of $\Aut^+(B,-)$ will be one of the following forms $\sigma = \mathcal{I}_{\hat{s}}$, $\theta \mathcal{I}_{\hat{t}}$, $\dagger = \mathcal{I}_{\varpi}$, $\sigma \dagger$ and $\theta \dagger$ where $\dagger$ is the outer automorphism and $s$, $t_{\gamma} \in \Aut(J)$.  The map $\hat{t}$ corresponds to $t_{\gamma} \in \Aut(J)$ of type (I), and $s \in \Aut(J)$ is of type (II).  Note that up to $\gamma$, $t \in \Aut(C)$ fixes a quaternion subalgebra $D \subset C$.

\begin{lem}
\label{fixJ}
The subalgebras of $B$ fixed by elements of order $2$ in $\Aut^+(B,-)$ take one of the following forms.
\begin{align}
B^{\hat{s}} &= \left\{ \begin{bmatrix}
							\alpha & j' \\
							l' & \beta
						\end{bmatrix} \ \bigg| \ \alpha, \beta \in k \text{ and } j',l' \in \beth \right\} \\
B^{\hat{t}} &= \left\{ \begin{bmatrix}
							\alpha & j' \\
							l' & \beta
						\end{bmatrix} \ \bigg| \ \alpha, \beta \in k \text{ and } j',l' \in \Her_3(D,\gamma) \right\} \\
B^{\varpi} &= \left\{ \begin{bmatrix}
							\alpha & j \\
							j & \alpha
						\end{bmatrix} \ \bigg| \ \alpha \in k \text{ and } j \in J \right\} \\ &\cong B^{\hat{s}\varpi} = \left\{ \begin{bmatrix}
							\alpha & j \\
							s(j) & \alpha
						\end{bmatrix} \ \bigg| \ \alpha \in k \text{ and } j \in J  \right\} \\
B^{\hat{t}\varpi} &= \left\{ \begin{bmatrix}
							\alpha & j \\
							t(j) & \alpha
						\end{bmatrix} \ \bigg| \ \alpha \in k \text{ and } j \in J \right\}
\end{align}
\end{lem}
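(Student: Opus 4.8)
The plan is to take each of the five automorphisms listed just before the statement and directly compute its fixed-point subalgebra, verifying that the claimed set is both fixed elementwise and closed (as a Brown subalgebra) under the product and the algebra involution $-$. The key structural fact to invoke throughout is Proposition~\ref{order2decomp}: since each automorphism has order $2$ and the trace form (and the associated bilinear form on $B$) is nondegenerate and preserved, $B$ decomposes as $\mathcal{D}\oplus\mathcal{D}^\perp$ with the automorphism acting as $+1$ on $\mathcal{D}$ and $-1$ on $\mathcal{D}^\perp$. Thus in each case the fixed subalgebra is exactly the $+1$-eigenspace, and I only need to identify that eigenspace explicitly.

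First I would handle the two inner cases $\sigma=\mathcal{I}_{\hat{s}}$ and $\theta\mathcal{I}_{\hat{t}}$, where $s,t_\gamma\in\Aut(J)$ are of type (II) and type (I) respectively. By Proposition~\ref{f4e6} these extend to $\hat{s},\hat{t}\in\Aut^+(B,-)$ acting diagonally on the off-diagonal entries $j,l$ via $\phi$ (with $\phi=\phi^\dagger$ when $\phi\in\Aut(J)$, as noted after Proposition~\ref{g2f4}). Hence the fixed entries are precisely those $j,l\in J$ fixed by $s$, respectively $t$. Invoking Corollary~\ref{F4res} and the preceding discussion, the type (II) element $s$ fixes the $11$-dimensional quadratic subalgebra $\beth=ku\oplus k(e-u)\oplus E_0$, giving $B^{\hat{s}}$, while the type (I) element $t_\gamma$ (coming from $t\in\Aut(C)$ fixing a quaternion subalgebra $D\subset C$, via the extension formula in Proposition~\ref{g2f4}) fixes $\Her_3(D,\gamma)$, giving $B^{\hat{t}}$. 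The diagonal entries $\alpha,\beta\in k$ are untouched by $\hat{\phi}$, so they remain free.

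Next I would treat the outer automorphism $\dagger=\mathcal{I}_\varpi$. From the displayed action of $\varpi$, a general element is fixed exactly when $\alpha=\beta$ and $j=l$, yielding $B^\varpi$; this was already observed in the text preceding Lemma~\ref{outerdef}, where $\mathcal{B}(J,k)$ is identified as the fixed subalgebra of $\varpi$. For the mixed cases $\sigma\dagger=\mathcal{I}_{\hat{s}}\dagger$ and $\theta\dagger$ (i.e. $\hat{t}\varpi$), I would use the conjugation identity $\varpi\hat{\phi}\varpi=\widehat{\phi^\dagger}$ established in the display before the section break, together with $\phi=\phi^\dagger$ for $\phi\in\Aut(J)$, to compute that $\hat{s}\varpi$ and $\hat{t}\varpi$ send $\begin{bmatrix}\alpha & j\\ l & \beta\end{bmatrix}$ to $\begin{bmatrix}\beta & s(l)\\ s(j) & \alpha\end{bmatrix}$ and similarly for $t$. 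Imposing the fixed-point condition forces $\alpha=\beta$ and $l=s(j)$ (resp.\ $l=t(j)$), producing $B^{\hat{s}\varpi}$ and $B^{\hat{t}\varpi}$. The claimed isomorphism $B^\varpi\cong B^{\hat{s}\varpi}$ then follows from Lemma~\ref{order2fixlem1}, since $\varpi$ and $\hat{s}\varpi$ are $\Aut^+(B,-)$-conjugate via $\hat{s}$ (using $\hat{s}\,\varpi\,\hat{s}^{-1}=\hat{s}^2\varpi=\varpi$ up to the order-$2$ relation, more carefully $\widehat{s}\,(\hat{s}\varpi)\,\widehat{s}^{-1}$ conjugates one to the other).

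The main obstacle I anticipate is the bookkeeping in the mixed cases: correctly tracking how $\dagger$ interchanges the roles of $j$ and $l$ and swaps the diagonal, and confirming that the resulting sets are genuinely closed under both the Brown product and the involution $-$ (the latter swaps $\alpha\leftrightarrow\beta$ and fixes $j,l$, so closure under $-$ is automatic once $\alpha=\beta$). I would verify closure under the product for $B^{\hat{s}\varpi}$ and $B^{\hat{t}\varpi}$ by the same computation as in the Proposition at the start of Section~3, which shows that a set of the form $\{\,\begin{bmatrix}\alpha & \phi_1(j)\\ \phi_2(j) & \alpha\end{bmatrix}\,\}$ is a Brown subalgebra precisely when $\phi_1,\phi_2$ are commuting involutions; here $\phi_1=\id$, $\phi_2=s$ (or $t$) satisfy these hypotheses automatically. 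This reduces the only nontrivial verification to an application of a result already proved, so no genuinely new calculation is required beyond identifying the eigenspaces.
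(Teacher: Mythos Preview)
Your computations of the fixed-point subalgebras $B^{\hat{s}}$, $B^{\hat{t}}$, $B^{\varpi}$, $B^{\hat{s}\varpi}$, and $B^{\hat{t}\varpi}$ are correct and are essentially what the paper does (the paper leaves these computations implicit, as they follow directly from the displayed actions of $\hat{\phi}$ and $\varpi$). The appeal to the Proposition at the start of Section~3 for closure of the mixed cases is also appropriate.

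However, your argument for the isomorphism $B^{\varpi}\cong B^{\hat{s}\varpi}$ has a genuine gap. You propose to show that $\varpi$ and $\hat{s}\varpi$ are $\Aut^+(B,-)$-conjugate via $\hat{s}$ and then invoke Lemma~\ref{order2fixlem1}. But $\hat{s}$ and $\varpi$ \emph{commute}: the identity $\varpi\hat{\phi}\varpi=\widehat{\phi^{\dagger}}$ together with $s\in\Aut(J)$ (so $s^{\dagger}=s$) gives $\varpi\hat{s}=\hat{s}\varpi$. Hence conjugation by $\hat{s}$ (or any power of it) sends $\varpi$ to $\varpi$, not to $\hat{s}\varpi$, and your parenthetical computation does not salvage this.

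The paper establishes this isomorphism in a separate Proposition in Section~6 by a different mechanism: it exhibits an element $\U_V\in\Inv(J)$ with $\U_V^2=s$ but $\U_V\notin\Aut(J)$, built from
\[
V=\begin{bmatrix}1&\cdot&\cdot\\ \cdot&\cdot&v\\ \cdot&-v&\cdot\end{bmatrix},\qquad v\in C,\ v^2=1,\ \overline{v}=-v.
\]
Since $\U_V$ is not an automorphism of $J$, one has $\U_V^{\dagger}=\U_V^{-1}\neq\U_V$, and the induced $\widehat{\U_V}\in\Aut^+(B,-)$ carries $B^{\varpi}$ isomorphically onto $B^{\hat{s}\varpi}$ (equivalently, conjugates $\varpi$ to $\hat{s}\varpi$). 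The point is that a conjugating element must lie outside $\Aut(J)$, precisely so that $\dagger$ acts nontrivially on it; any ``square root'' of $s$ inside $\Inv(J)\setminus\Aut(J)$ will do, and the splitness of $C$ guarantees such a $v$ exists.
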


This gives us four isomorphism classes of $k$-involutions up to $\q_D$ where $D$ is the quaternion subalgebra fixed by $\hat{t}|_C = t$, and the classes of $\q_D$ have been determined and are included in \citep{ ja58, sv00, hu14}.  Note, regardless of $k$, $\gamma$ no longer determines isomorphism classes of $k$-involutions of $\Aut^+(B,-)$ for elements in $k^*$ not represented by $\q_D$ as is the case for $\Aut(J)$, since $\mathcal{I}_{\iota}$ is an isomorphism of norm preserving groups when $\iota=\mathcal{I}_{\gamma}$ as noted in \citep{ja61}.

\begin{thm}
\label{E6kinv}
Let $C$ be a split octonion algebra and $D$ a quaternion subalgebra fixed by $t \in \Aut(C)$.  Let $\sigma = \mathcal{I}_{\hat{s}}$ and $\dagger = \mathcal{I}_{\varpi}$.  The following are representatives of $\Aut^+(B,-)$-conjugacy classes
\begin{enumerate}[$1.$]
\item if $k=K$ is algebraically closed there are four classes of $k$-involutions, one for each of the representatives $\sigma$, $\theta$, $\dagger$ and $\theta \dagger$,
\item if $k=\mathbb{F}_p$ (where $p$ is not even) there are four classes of $k$-involutions, one for each $\sigma$, $\theta$, $\dagger$ and $\theta \dagger$,
\item if $k=\mathbb{R}$ there are six isomorphism classes of $k$-involutions, one each for representatives $\sigma$ and $\dagger$; there are two isomorphism classes each for $\theta \mathcal{I}_{\hat{t}}$ and $\theta \mathcal{I}_{\hat{t}} \dagger$ respectively depending on whether $D$ is split or division,
\item if $k=\mathbb{Q}_p$ there are six isomorphism classes of $k$-involutions, one each for representatives $\sigma$ and $\dagger$; there are two isomorphism classes each for $\theta \mathcal{I}_{\hat{t}}$ and $\theta \mathcal{I}_{\hat{t}} \dagger$ respectively depending on whether $D$ is split or division,
\item if $k=\mathbb{Q}$ there is one class of $k$-involutions each with representative $\sigma$ and $\dagger$; there are an infinite number of classes for $\theta \mathcal{I}_{\hat{t}}$ and $\theta \mathcal{I}_{\hat{t}} \dagger$ respectively corresponding to one split class of quaternion subalgebras of $C$ and an infinite number of quaternion division subalgebras of $C$.
\end{enumerate}
\end{thm}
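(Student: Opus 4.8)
The plan is to convert the classification of $k$-involutions of $\Aut^+(B,-)$ into a classification of their fixed point subalgebras, and then to count isomorphism types of those subalgebras field by field. The backbone is the correspondence in Lemmas \ref{order2fixlem1} and \ref{order2fixlem2}: two order-$2$ automorphisms of $B$ are $\Aut^+(B,-)$-conjugate exactly when their fixed subalgebras are isomorphic by an ambient automorphism. To set this up I would first exhibit the requisite nondegenerate bilinear form on $B$ — assembled from the trace pairing $\Tr$ on the off-diagonal copies of $J$ together with the standard pairing on the diagonal $k$-entries — and verify that every order-$2$ element of $\Aut^+(B,-)$ is an isometry of it, so that the orthogonal splitting $B=\mathcal{D}\oplus\mathcal{D}^\perp$ with the involution acting as $-1$ on $\mathcal{D}^\perp$ (Proposition \ref{order2decomp}) holds and Lemma \ref{order2fixlem2} applies. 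Once this is in place, counting conjugacy classes is exactly counting isomorphism types among the five fixed subalgebras recorded in Lemma \ref{fixJ}.

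Next I would sort the five shapes of Lemma \ref{fixJ} by whether $\varpi$ appears. The inner involutions are $\sigma=\mathcal{I}_{\hat{s}}$ (type (II), fixing $B^{\hat{s}}$) and $\theta\mathcal{I}_{\hat{t}}$ (type (I), fixing $B^{\hat{t}}$); the outer ones are $\dagger$, $\sigma\dagger$, and $\theta\mathcal{I}_{\hat{t}}\dagger$, fixing $B^{\varpi}$, $B^{\hat{s}\varpi}$, and $B^{\hat{t}\varpi}$. The isomorphism $B^{\varpi}\cong B^{\hat{s}\varpi}$ asserted in Lemma \ref{fixJ}, once checked to be realized by an ambient automorphism, collapses $\sigma\dagger$ into the class of $\dagger$, leaving four genuinely distinct shapes. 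For $B^{\hat{s}}$ and $B^{\varpi}$ the inner datum is the type (II) $11$-dimensional piece $\beth$ and the full split $J$, each unique up to isomorphism over any field by Corollary \ref{F4res} and the uniqueness of split $J$; these contribute exactly one class each, $\sigma$ and $\dagger$, over every field in the list. The remaining two shapes, $B^{\hat{t}}$ and $B^{\hat{t}\varpi}$, depend only on the quaternion subalgebra $D$ fixed by $\hat{t}|_C=t$ (over $K$ and $\mathbb{F}_p$ the split choice makes $\theta\mathcal{I}_{\hat{t}}$ and $\theta\mathcal{I}_{\hat{t}}\dagger$ coincide with the listed $\theta$ and $\theta\dagger$).

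The field-by-field count then reduces to counting isomorphism classes of $D$, equivalently of $\q_D$. Here I would invoke the simplification noted after Lemma \ref{fixJ}: in contrast with the $\F_4$ situation of Corollary \ref{F4res}, the scaling parameter $\gamma$ no longer separates $\Aut^+(B,-)$-classes, since $\mathcal{I}_{\iota}$ with $\iota=\mathcal{I}_{\gamma}$ is an isomorphism of norm preserving groups and the similarities $\phi_\lambda$ of Lemma \ref{classalb} absorb the residual scaling through the isomorphism criterion of Lemma \ref{browniso2}. Thus $\Her_3(D,\gamma)$ and $\Her_3(D,\gamma')$ give conjugate involutions once $D$ is fixed, and the count is driven solely by whether $D$ is split or division. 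Feeding in the standard classification of quaternion subalgebras of the split octonion $C$ yields one class (split only) over $K$ and $\mathbb{F}_p$, two classes over $\mathbb{R}$ and $\mathbb{Q}_p$, and infinitely many over $\mathbb{Q}$; applying this to both $\theta\mathcal{I}_{\hat{t}}$ and $\theta\mathcal{I}_{\hat{t}}\dagger$ and adding the single classes of $\sigma$ and $\dagger$ produces the totals $4,4,6,6,\infty$ of items $1$--$5$.

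I expect the main obstacle to be the $\gamma$-reduction rather than the bookkeeping. One must show rigorously that changing $\gamma$ does not alter the $\Aut^+(B,-)$-conjugacy class, which means chasing $\phi_\lambda$ and the outer twist $\dagger$ through Lemma \ref{browniso2} and confirming that the conjugating element genuinely lies in $\Aut^+(B,-)$ and preserves the involution $-$ on $B$; this is exactly where the $\F_4$ and $E_6$ counts diverge (notably the drop from three type (I) classes over $\mathbb{R}$ for $\Aut(J)$ to two here). A secondary point, easily handled but worth stating, is that conjugation preserves the inner/outer dichotomy — $\dagger$ generates $\Out(\Inv(J))$ — so no outer involution can be identified with an inner one, which is what guarantees the four surviving shapes are pairwise distinct over each base field.
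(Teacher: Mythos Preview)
Your proposal follows essentially the same route as the paper: reduce conjugacy of $k$-involutions to isomorphism of fixed Brown subalgebras via Lemmas \ref{order2fixlem1} and \ref{order2fixlem2}, invoke the list in Lemma \ref{fixJ} together with the collapse $B^{\varpi}\cong B^{\hat s\varpi}$, use the remark after Lemma \ref{fixJ} that $\gamma$ no longer separates classes in $\Inv(J)$, and then count quaternion subalgebras of $C$ field by field. The paper's proof is terser but rests on exactly these ingredients; your identification of the $\gamma$-reduction as the delicate step matches where the paper places the emphasis.
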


\begin{proof}
Notice isomorphism classes of Brown subalgebras do not depend on $\gamma$.  They also do not depend on $\zeta$, since we are considering only split Brown algebras.  So we only need to worry about cases where the class of quaternion algebra determines a new subalgebra fixed by an automorphism.  This only arises in the case where the invariant of type (1) is isomorphic to $\theta \mathcal{I}_{\hat{t}}$ or $\theta \mathcal{I}_{\hat{t}} \dagger$.  In the other cases we maintain the whole Albert algebra in some form, or the Albert subalgebra is of quadratic type and there is only one isomorphism class.  
\qed\end{proof}

It is straight forward to construct explicit representatives from the respective $\phi$-split and $(\phi,k)$-split tori using \ref{f4expl} and the discussion of maximal $k$-split tori of type $\E_6$ from section $3$.  We collect this information for some specific fields here in the following theorem.  Since there is only ever one isomorphism class that includes $\sigma$ or $\dagger$, we can use their description above, and focus on when the $k$-involution of type $\theta \mathcal{I}_{\hat{t}}$ or $\theta \mathcal{I}_{\hat{t}}\dagger$.

\begin{thm}
\label{E6kinv2}
The isomorphism classes of type $\theta$ and $\theta\dagger$ each have representatives of the form $\theta \mathcal{I}_{\hat{t}}$ and $\theta \mathcal{I}_{\hat{t}} \mathcal{I}_{\varpi}$, and depend only on 
\[ t=t(u_1,u_2,v_1,v_2,w_1,w_2) \in T_{\mathcal{I}_{\varpi}}^-. \]
Each of the following values of $t$ correspond to exactly one conjugacy class of type $\theta$ and exactly one conjugacy class of type $\theta \dagger$.
\begin{enumerate}[$1.$]
\item $k=\bar{k}$ there is only one conjugacy class, so we can use $t(1,1,1,1,1,1)$
\item $k=\mathbb{F}_p$ there is only one conjugacy class, so we can use $t(1,1,1,1,1,1)$
\item $k=\mathbb{R}$ or $\mathbb{Q}_2$ we can choose $t(1,1,1,1,-1,1)$ for the case when $D$ is split and $t(1,1,1,1,1,1)$ for $D$ a division algebra,
\item $k=\mathbb{Q}_p$ when $p>2$ we can choose $t(1,1,1,1,-1,1)$ for $D$ split, and $t(1,1,1,1,-p,-Z_p)$ for $D$ a division algebra.
\end{enumerate}
\end{thm}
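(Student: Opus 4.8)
The plan is to reduce the entire statement to the already-proved $\F_4$ classification of Theorem \ref{f4expl} together with the explicit description of the maximal $(\theta\mathcal{I}_{\varpi},k)$-split torus $T$ from Section 3 and Proposition \ref{e6kinvtorus}. Recall that $T$ arises from the embedding $\varphi:\SL_3\times\SL_3\times\SL_3\hookrightarrow\Inv(J)$, so a generic element of $T$ is determined by the maximal tori of the three $\SL_3$ factors, giving exactly the six coordinates $t(u_1,u_2,v_1,v_2,w_1,w_2)$. First I would record that, under the inclusion $\Aut(J)\subset\Aut^+(B,-)$ of Proposition \ref{f4e6}, the $\F_4$ torus is recovered by identifying the first two $\SL_3$ factors, i.e. it is the image of $\varphi(u,u,v)$. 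Consequently an $\F_4$ representative $t(u_1,u_2,v_1,v_2)$ of Theorem \ref{f4expl} sits inside $T$ as the $\E_6$ element $t(u_1,u_2,u_1,u_2,v_1,v_2)$, and every $k$-involution of type $\theta\mathcal{I}_{\hat{t}}$ or $\theta\mathcal{I}_{\hat{t}}\dagger$ has its $k$-inner element in this torus.

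Next I would invoke Theorem \ref{E6kinv}: the $\Aut^+(B,-)$-conjugacy classes of type $\theta\mathcal{I}_{\hat{t}}$ and of type $\theta\mathcal{I}_{\hat{t}}\dagger$ are each in bijection with the isomorphism classes of the quaternion subalgebra $D\subset C$ fixed by $t|_C\in\Aut(C)$. Tracking the $\G_2$-data through Theorems \ref{G2res} and \ref{f4expl} shows that this quaternion class is carried by the last coordinate pair $(w_1,w_2)$, arising from the third $\SL_3$ factor, while the remaining coordinates $(u_1,u_2,v_1,v_2)$ encode only the reduction datum $\gamma$ of the Albert algebra. The key simplification at the $\E_6$ level, noted just before Theorem \ref{E6kinv}, is that $\gamma$ no longer affects the isomorphism class (because $\mathcal{I}_{\iota}$ with $\iota=\mathcal{I}_{\gamma}$ is an isomorphism of norm-preserving groups). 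I would therefore normalize the $\gamma$-coordinates to the identity and read off the representatives directly from the $\G_2$-level quaternion data: the split quaternion corresponds to a single sign $w_1=-1$, giving $t(1,1,1,1,-1,1)$; the division quaternion over $\mathbb{Q}_p$ ($p>2$) corresponds to the Hilbert-symbol data $(w_1,w_2)=(-p,-Z_p)$, giving $t(1,1,1,1,-p,-Z_p)$; and over $\bar{k}$ and $\mathbb{F}_p$ the unique (split) class is $t(1,1,1,1,1,1)$.

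To conclude that each listed $t$ gives exactly one class of type $\theta$ and exactly one of type $\theta\dagger$, I would match the field-by-field counts against Corollary \ref{F4res} and the standard quaternion-algebra classification over $\bar{k}$, $\mathbb{F}_p$, $\mathbb{R}$, $\mathbb{Q}_2$, and $\mathbb{Q}_p$: over $\bar{k}$ and $\mathbb{F}_p$ there is a single quaternion class, over $\mathbb{R}$, $\mathbb{Q}_2$, and $\mathbb{Q}_p$ there are exactly two (split and division), and the representatives above realize each one. Since the outer twist $\dagger=\mathcal{I}_{\varpi}$ commutes with the torus action and does not interact with the quaternion data, the count for $\theta\dagger$ simply mirrors that for $\theta$, so the same six coordinates serve both families.

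The hard part will be the collapse of the $\gamma$-dependence in passing from $\F_4$ to $\E_6$. Over $\mathbb{R}$ the $\F_4$ classification in Corollary \ref{F4res} produces three type (I) classes—split, and two division classes distinguished by $\gamma$ being definite or indefinite—whereas the $\E_6$ statement lists only two. I expect the main obstacle to be a clean verification that the two $\gamma$-distinguished division representatives, which lift to $t(1,1,1,1,1,1)$ and $t(-1,1,-1,1,1,1)$ in $\E_6$ coordinates, become $\Aut^+(B,-)$-conjugate once the larger group is in play. This is exactly where the norm-similarity and isotopy freedom of Lemma \ref{classalb} and Corollary \ref{topemorph}, available in $\Inv(J)$ but not in $\Aut(J)$, must be used to absorb $\gamma$; and one must simultaneously check that this extra conjugating freedom does not accidentally merge the genuinely distinct split and division classes, a point that is controlled by the bijection of Theorem \ref{E6kinv}.
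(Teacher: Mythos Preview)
Your proposal is correct and follows essentially the same route as the paper: the paper's own proof is the one-liner ``This follows from the above discussion, \ref{e6kinvtorus}, and \ref{f4expl},'' and you have simply unpacked those three ingredients in detail. The collapse of the $\gamma$-dependence that you flag as the hard part is in fact already handled in the remark immediately following Lemma \ref{fixJ} (that $\mathcal{I}_\iota$ with $\iota=\mathcal{I}_\gamma$ is an isomorphism of norm-preserving groups), so no additional work beyond what you outline is required.
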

\begin{proof}
This follows from the above discussion, \ref{e6kinvtorus}, and \ref{f4expl}.
\qed\end{proof}

\section{Fixed point groups}

In this section we will make repeated use of the following result, which can be found in \citep{hu12, hu14}.

\begin{lem}
\label{fixedlemma}
Let $t\in \Aut(\mathcal{A})=G$ such that $t^2 = \id$ and $\mathcal{D} \subset \mathcal{A}$ the subalgebra elementwise fixed by $t$ then $f \in  G^{\I_t} = \{g \in G \ | \ \I_t(g)=g \} \subset \Aut(G)$ if and only if $f$ leaves $\mathcal{D}$ invariant.
\end{lem}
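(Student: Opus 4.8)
The plan is to prove the two implications separately, using the observation that since $t^2=\id$ we have $t^{-1}=t$, so the condition $\I_t(f)=f$, i.e. $tft^{-1}=f$, is equivalent to the assertion that $f$ and $t$ commute, $tf=ft$.

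For the forward implication, suppose $tf=ft$ and let $a\in\mathcal{D}$, so that $t(a)=a$. Then $t\big(f(a)\big)=f\big(t(a)\big)=f(a)$, whence $f(a)\in\mathcal{D}$ and $f(\mathcal{D})\subseteq\mathcal{D}$. Applying the same computation to $f^{-1}$, which also commutes with $t$, gives $f^{-1}(\mathcal{D})\subseteq\mathcal{D}$, so in fact $f(\mathcal{D})=\mathcal{D}$ and $f$ leaves $\mathcal{D}$ invariant.

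For the converse, which I expect to be the main obstacle, I would pass to the eigenspace decomposition of $t$. Each algebra $\mathcal{A}$ under consideration carries a nondegenerate bilinear form (the trace or norm form) left invariant by every element of $\Aut(\mathcal{A})$, hence by both $t$ and $f$; since $\ch(k)\neq 2$, Proposition \ref{order2decomp} yields $\mathcal{A}=\mathcal{D}\oplus\mathcal{D}^{\perp}$ with $t=\id$ on $\mathcal{D}$ and $t=-\id$ on $\mathcal{D}^{\perp}$. Assuming $f(\mathcal{D})=\mathcal{D}$, the fact that $f$ is an isometry forces $f(\mathcal{D}^{\perp})=\mathcal{D}^{\perp}$ as well. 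Writing an arbitrary $x=a+b$ with $a\in\mathcal{D}$ and $b\in\mathcal{D}^{\perp}$, and using $f(a)\in\mathcal{D}$, $f(b)\in\mathcal{D}^{\perp}$, I then compute $tf(x)=f(a)-f(b)=f(a-b)=ft(x)$. Thus $tf=ft$, and therefore $\I_t(f)=tft^{-1}=tft=f$.

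The crux is the reverse direction, and within it the step that $f$ respects the orthogonal complement $\mathcal{D}^{\perp}$; this is precisely where invariance of the bilinear form under $f$ together with the characteristic hypothesis are essential, since without them the $\pm 1$ eigenspace splitting of \ref{order2decomp} is unavailable. Once $\mathcal{A}$ decomposes into the $+1$ and $-1$ eigenspaces of $t$ and $f$ is seen to preserve each summand, commutation of $f$ with $t$ is immediate from the elementwise computation above.
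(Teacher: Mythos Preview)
The paper does not supply its own proof of this lemma; it is simply quoted with a citation to \citep{hu12,hu14}. So there is no line-by-line comparison to make, and the question is only whether your argument stands on its own.

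Your forward implication is clean and complete. For the converse you have correctly identified that the bare statement, as written, is too strong: preserving the $+1$-eigenspace of $t$ does not by itself force $f$ to preserve the $-1$-eigenspace, and hence does not force $ft=tf$, unless one has some further structure tying the two eigenspaces together. You supply that structure by invoking the invariant nondegenerate bilinear form and Proposition~\ref{order2decomp}, exactly as the paper does in the closely related Lemmas~\ref{order2fixlem1}--\ref{order2fixlem2}. This is the right fix, and your eigenspace computation $tf(a+b)=f(a)-f(b)=ft(a+b)$ is correct once $f(\mathcal{D}^{\perp})\subseteq\mathcal{D}^{\perp}$ is established via the isometry property.

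In short: your proof is correct under the added hypothesis of an $\Aut(\mathcal{A})$-invariant nondegenerate form with $\ch(k)\neq 2$, which holds in every situation where the paper applies the lemma (composition, Albert, and Brown algebras all carry such a form). You should state that hypothesis explicitly rather than leave it implicit, since without it the converse direction genuinely fails; but this is a presentational point, and your mathematics is sound and in the same spirit as the surrounding arguments in the paper.
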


We will denote $G = \Inv(J)$ and $G^{\mathcal{I}_f} = G^f$, where $f \in \Inv(J)$.  From this we can identify fixed point groups corresponding to each $k$-involution.  We let $\phi \in \Inv(J)$ such that $\phi^2 = \id$, then $\phi \in \Aut(J)$ and $i$ leaves some subalgebra $\mathcal{S} \subset J$ fixed.  By \ref{fixedlemma} $\theta \in G^\phi$ if and only if $\theta$ leaves the corresponding subalgebra invariant, and the same goes for $G^{\phi\varpi}$.

\begin{prop}
\label{outerfix}
An element $\delta \in \Inv(J)$ is in $G^{\phi\varpi}$ where $\phi^2 = \id$ if and only if $\phi \delta \phi = \delta^{\dagger}$.
\end{prop}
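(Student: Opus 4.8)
The plan is to unwind the definition of the fixed-point group $G^{\phi\varpi}$ and reduce the membership condition to a relation living purely inside $\Inv(J)$. By the convention fixed just before this section, $G^{\phi\varpi}$ is the set of $\delta \in G = \Inv(J)$ fixed by conjugation by $\hat\phi\varpi \in \Aut^+(B,-)$, so that $\delta \in G^{\phi\varpi}$ precisely when $(\hat\phi\varpi)\hat\delta(\hat\phi\varpi)^{-1} = \hat\delta$. Since $\phi^2 = \id$ forces $\phi \in \Aut(J)$ (as recorded just before this proposition, cf.\ \ref{E6inv}), I may take $\hat\phi$ to be the homomorphic image of $\phi$ under the injection $\Inv(J) \hookrightarrow \Aut^+(B,-)$, and I will use $\phi^{\dagger} = \phi$, valid for $\phi \in \Aut(J)$ because then $\phi(j)\#\phi(l) = \phi(j\#l)$.

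First I would compute the conjugation directly. Because $\varpi^2 = \id$ we have $(\hat\phi\varpi)^{-1} = \varpi\hat\phi^{-1}$, so that $(\hat\phi\varpi)\hat\delta(\hat\phi\varpi)^{-1} = \hat\phi\,(\varpi\hat\delta\varpi)\,\hat\phi^{-1}$. The identity $\varpi\hat\delta\varpi = \widehat{\delta^{\dagger}}$, established for all elements of $\Inv(J)$ in the verification that $\varpi$ induces $\dagger$, turns this into $\hat\phi\,\widehat{\delta^{\dagger}}\,\hat\phi^{-1}$; and since $\phi \mapsto \hat\phi$ is a group homomorphism and $\phi^{-1} = \phi$, this equals $\widehat{\phi\delta^{\dagger}\phi}$. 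By injectivity of the embedding, $\delta$ is fixed if and only if $\phi\delta^{\dagger}\phi = \delta$.

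It then remains to show that $\phi\delta^{\dagger}\phi = \delta$ is equivalent to the stated relation $\phi\delta\phi = \delta^{\dagger}$. I would apply the automorphism $\dagger$ to the equation $\phi\delta^{\dagger}\phi = \delta$: since $\dagger \colon \Inv(J) \to \Inv(J)$ is a group automorphism it distributes over the product, and since it is an involution with $\phi^{\dagger} = \phi$ we obtain $\phi\delta\phi = \delta^{\dagger}$. The reverse implication follows by applying $\dagger$ once more, closing the equivalence in both directions.

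The computation is short, so the only point requiring care---and hence the main obstacle---is the bookkeeping about where each map lives: one must keep straight that $\phi$ and $\delta$ are elements of $\Inv(J)$ while $\hat\phi$, $\varpi$, and their products are automorphisms of $B$, and one must marshal from the preliminaries that $\phi \mapsto \hat\phi$ is a homomorphism and that $\dagger$ is an involutive automorphism of $\Inv(J)$ restricting to the identity on the $\dagger$-action over $\Aut(J)$. Once these facts are in place, the two displayed manipulations finish the argument.
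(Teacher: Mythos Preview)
Your proof is correct, but it follows a different route from the paper's. The paper invokes Lemma~\ref{fixedlemma} to say that $\delta \in G^{\phi\varpi}$ if and only if $\hat\delta$ leaves the fixed subalgebra $B^{\phi\varpi}$ invariant, and then computes the action of $\hat\delta$ on a generic element $\begin{bmatrix}\alpha & j \\ \phi(j) & \alpha\end{bmatrix}$ of that subalgebra; the condition that the image again has the required shape is $\delta^{\dagger}\phi = \phi\delta$, which after multiplying on the right by $\phi$ gives $\phi\delta\phi = \delta^{\dagger}$ directly. You instead work entirely at the group level: you compute the conjugate $(\hat\phi\varpi)\hat\delta(\hat\phi\varpi)^{-1}$ using the already-established identity $\varpi\hat\delta\varpi = \widehat{\delta^{\dagger}}$ and the homomorphism $\phi\mapsto\hat\phi$, obtain $\phi\delta^{\dagger}\phi = \delta$, and then apply the involutive automorphism $\dagger$ once more to reach the stated form. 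Your approach is arguably cleaner in that it never touches elements of $B$ and does not rely on Lemma~\ref{fixedlemma} or on the explicit description of $B^{\phi\varpi}$; the paper's approach, on the other hand, lands immediately on the displayed relation without the extra $\dagger$-application and keeps the argument tied to the subalgebra picture used throughout the section.
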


\begin{proof}
Let $\delta \in \Inv(J)$ then $\hat{\delta} \in \Aut^+(B,-)$ and
\[ \hat{\delta} \left( \begin{bmatrix}
		\alpha & j \\
		\phi(j) & \alpha
		\end{bmatrix}\right) =
		\begin{bmatrix}
		\alpha & \delta(j) \\
		\delta^{\dagger}\phi(j) & \alpha
		\end{bmatrix}. \]
We now have
\[ \begin{bmatrix}
		\alpha &\delta(j) \\
		\delta^{\dagger}\phi(j) & \alpha
		\end{bmatrix} \in \Aut^+(B,-)^{\phi\varpi } \]
if and only if $\delta^{\dagger}\phi (j) = \phi \delta(j)$.  This tells that 
\begin{equation}
\label{outfix}
\phi \delta \phi = \delta^{\dagger}, 
\end{equation}
and we arrive at equivalent statements when we consider $\varpi \phi$ or $\phi \varpi$.
\qed\end{proof}

\begin{cor}
\label{corouterfix}
The following three statements characterize the fixed point groups corresponding to outer $k$-involutions of $G=\Inv(J)$ for $\delta \in G$.
\begin{enumerate}[$1.$]
\item $\delta \in G^{\varpi}$ if and only if $\delta  = \delta^{\dagger}$ 
\item $\delta \in G^{\hat{s}\varpi}$ if and only if $ \delta = s\delta^{\dagger}s$ 
\item $\delta \in G^{\hat{t}\varpi}$ if and only if $ \delta  = t\delta^{\dagger}t$ 
\end{enumerate}
\end{cor}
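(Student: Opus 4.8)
The plan is to read off all three equivalences as special cases of Proposition \ref{outerfix}, which already supplies the criterion $\phi\delta\phi = \delta^{\dagger}$ for membership in $G^{\phi\varpi}$ whenever $\phi^2 = \id$. The three statements correspond to the three choices $\phi = \id$, $\phi = s$, and $\phi = t$, so the only content beyond invoking the proposition is the bookkeeping that matches each fixed-point group to its $\phi$ together with an elementary rearrangement of the resulting identity.

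First I would dispose of statement (1). Taking $\phi = \id$ in Proposition \ref{outerfix}, the hypothesis $\phi^2 = \id$ holds trivially and $G^{\phi\varpi} = G^{\varpi}$. The criterion $\phi\delta\phi = \delta^{\dagger}$ collapses to $\delta = \delta^{\dagger}$, which is exactly the stated condition. For statements (2) and (3) I would take $\phi = s$ and $\phi = t$ respectively, where $s \in \Aut(J)$ is the type (II) involution and $t \in \Aut(J)$ the type (I) involution appearing in Lemma \ref{fixJ}; both are of order $2$, so the hypothesis $\phi^2 = \id$ is met and the groups $G^{\phi\varpi}$ are precisely $G^{\hat{s}\varpi}$ and $G^{\hat{t}\varpi}$. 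Proposition \ref{outerfix} then yields $s\delta s = \delta^{\dagger}$ and $t\delta t = \delta^{\dagger}$.

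To recover the stated forms I would conjugate each identity on both sides by the relevant involution. From $s\delta s = \delta^{\dagger}$, applying $s(\,\cdot\,)s$ and using $s^{-1} = s$ gives $\delta = s\bigl(s\delta s\bigr)s = s\delta^{\dagger}s$; the analogous manipulation with $t^{-1} = t$ gives $\delta = t\delta^{\dagger}t$. Since each step is reversible, these are genuine equivalences, matching conditions (2) and (3).

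There is essentially no analytic obstacle here, as the corollary is a direct translation of Proposition \ref{outerfix}. The one point requiring care is notational: I must confirm that the fixed-point groups $G^{\hat{s}\varpi}$ and $G^{\hat{t}\varpi}$ of the outer automorphisms $\mathcal{I}_{\hat{s}}\dagger$ and $\mathcal{I}_{\hat{t}}\dagger$ of $G = \Inv(J)$ correspond, under the action of $\Inv(J)$ on $B$, to the choices $\phi = s$ and $\phi = t$ in the proposition, and that $s$ and $t$ genuinely have order $2$ so that the proposition's hypothesis applies in each case.
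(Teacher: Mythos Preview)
Your proposal is correct and matches the paper's intended approach: the corollary is stated in the paper without proof, as an immediate consequence of Proposition~\ref{outerfix} obtained by specializing $\phi$ to $\id$, $s$, and $t$. The only additional step you spell out, conjugating $\phi\delta\phi = \delta^{\dagger}$ by $\phi$ to obtain $\delta = \phi\delta^{\dagger}\phi$, is exactly the triviality the paper leaves implicit.
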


Notice that $\varpi \in \Aut^+(B,-)$ is in the fixed point group of every outer $k$-involution.  Also, when $\delta \in \Aut(J)$ recall $\delta^{\dagger} = \delta$.  So equation \ref{outfix} reduces to 
\[ \phi \delta \phi = \delta, \]
which puts $\delta \in \Aut(J)^{\phi}$.  In case $3$ above we have 
\[ \delta = t \delta^{\dagger} t. \]
Recall that in $\Inv(J)$, $t_{\gamma} \cong t_{\gamma'}$ for all $\gamma, \gamma' \in G_m(k)^{\times 3}$.  So our $k$-involutions of type $t_{\gamma}$ will be written as $t_{\q}$, where $\q$ is the norm of a quaternion algebra.

\begin{prop}
The subalgebras $B^{\varpi}$, $B^{s \varpi}$ are isomorphic.
\end{prop}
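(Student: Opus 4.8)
The plan is to show that the two order-two automorphisms $\varpi$ and $\hat s\varpi$ of $B$ are conjugate inside $\Aut^+(B,-)$, and then to invoke Lemma \ref{order2fixlem1}, which turns conjugacy of involutions into isomorphism of their fixed subalgebras. Conjugating $\varpi$ by $\hat\psi$ for some $\psi\in\Inv(J)$ transforms it, by the same bookkeeping used in Proposition \ref{outerfix}, into the map
\[ \begin{bmatrix}\alpha & j\\ l & \beta\end{bmatrix}\longmapsto \begin{bmatrix}\beta & \psi(\psi^\dagger)^{-1}(l)\\ \psi^\dagger\psi^{-1}(j) & \alpha\end{bmatrix}, \]
which coincides with $\hat s\varpi$ exactly when $\psi^\dagger\psi^{-1}=s$ (the companion condition $\psi(\psi^\dagger)^{-1}=s$ is equivalent, since $s^2=\id$). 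Thus everything reduces to producing a single $\psi\in\Inv(J)$, rational over $k$, with $\psi^\dagger\psi^{-1}=s$, where $s=\U_{s_{23}}$ is the type (II) involution.

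To solve this equation I would first record a clean formula for $\psi\mapsto\psi^\dagger\psi^{-1}$. Since $J$ is split, Theorem \ref{l2inv} gives $\Inv(J)=\Inv_2(J)\subset\Gamma_1(J)$, so by Proposition \ref{gamform} every $\psi\in\Inv(J)$ may be written $\psi=\U_a\U_b$, whence $\psi(e)=\U_a(b^2)$. Using $\U_x^\dagger=\U_{x^{-1}}$ (Lemma \ref{outerdef}), the fact that $\dagger$ is a group homomorphism, the square rule $\U_x^2=\U_{x^2}$, the fundamental formula $\U_x\U_y\U_x=\U_{\U_x y}$, and the Jordan inversion identity $(\U_a c)^{-1}=\U_{a^{-1}}(c^{-1})$, I expect to obtain $\psi^\dagger\psi^{-1}=\U_{a^{-1}}\U_{b^{-2}}\U_{a^{-1}}=\U_{\psi(e)^{-1}}$. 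So it suffices to arrange $\psi(e)^{-1}=\pm s_{23}$. A direct norm computation gives $\N(s_{23})=-1$, hence $\N(-s_{23})=1$; since $\psi\in\Inv(J)$ forces $\N(\psi(e))=1$, the sign is pinned down and the target becomes $\psi(e)=-s_{23}$, for which $\U_{\psi(e)^{-1}}=\U_{-s_{23}}=\U_{s_{23}}=s$ follows from $\U_{-x}=\U_x$ together with $s_{23}^{-1}=s_{23}$.

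It then remains to exhibit a $k$-rational $\psi\in\Inv(J)$ with $\psi(e)=-s_{23}$. Because $\N(-s_{23})=1$, the element $-s_{23}$ lies on the norm-one variety $W$ on which $\Inv(J)$ acts transitively, so such a $\psi$ exists at least geometrically. The step I expect to demand the most care is the rationality of $\psi$: transitivity on the $k$-points of $W$ is not automatic, the obstruction being measured by a torsor under the stabilizer of $-s_{23}$, which is the $\F_4$-group $\Aut\big(J^{\langle -s_{23}\rangle}\big)$. Here the split hypothesis rescues the argument, since by Corollary \ref{topemorph} every isotope of a split Albert algebra is again split, so this stabilizer is split and the torsor is trivial; hence a $k$-rational $\psi$ with $\psi(e)=-s_{23}$ exists. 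With such a $\psi$ in hand, $\hat\psi$ conjugates $\varpi$ to $\hat s\varpi$ in $\Aut^+(B,-)$, and Lemma \ref{order2fixlem1} yields $B^{\varpi}\cong B^{\hat s\varpi}=B^{s\varpi}$, as claimed. The main obstacle throughout is precisely this rational-transitivity point; the remaining identities are routine manipulations with $\U$-operators and the outer map $\dagger$.
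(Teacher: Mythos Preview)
Your overall approach coincides with the paper's: exhibit $\psi\in\Inv(J)$ with $\psi^{\dagger}\psi^{-1}=s$, so that $\hat\psi$ conjugates $\varpi$ to $\hat s\,\varpi$, and then read off the isomorphism of fixed subalgebras (the paper does this last step by hand rather than citing Lemma~\ref{order2fixlem1}, but it is the same content). Where you argue abstractly for the existence of $\psi$, the paper simply writes one down. Realizing $s$ as $\U_{S'}$ with $S'=\diag(1,-1,-1)$ rather than $\U_{s_{23}}$, it chooses a split octonion $v$ with $v^2=1$ and $\bar v=-v$, sets
\[ V=\begin{bmatrix}1&\cdot&\cdot\\ \cdot&\cdot&v\\ \cdot&-v&\cdot\end{bmatrix}, \]
checks $\N(V)=1$ and $V^2=S'$, and takes $\psi=\U_V$; then $\psi(e)=V^2=S'$ and $\psi^{\dagger}\psi^{-1}=\U_{S'^{-1}}=\U_{S'}=s$, exactly matching your reduction. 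No transitivity or cohomology is needed.

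Your abstract route has one slip worth flagging. The inference ``the stabilizer is split $\F_4$, hence the torsor is trivial'' is not valid: $H^1(k,\Aut(J))$ classifies Albert $k$-algebras and is generally nonzero (already over $\mathbb{R}$ or $\mathbb{Q}_p$), so splitness of the structure group does not force every torsor to have a $k$-point. What Corollary~\ref{topemorph} actually gives is stronger and is precisely what you need: a $k$-isomorphism of Jordan algebras $\phi:J\to J^{\langle w\rangle}$ for $w=-s_{23}$. Such a $\phi$ sends $e$ to the unit $w^{-1}=-s_{23}$ of the isotope and, since $\N(w)=1$, preserves $\N$; thus $\phi\in\Inv(J)(k)$ with $\phi(e)=-s_{23}$. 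In other words, the corollary supplies the $k$-point of the fiber directly, not merely the splitness of its structure group. With that rewording your argument is complete and equivalent to the paper's, just less explicit.
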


\begin{proof}
We can take $\U_{S'} = s$ as the representative of the class of $k$-involutions fixing an $11$ dimensional quadratic subalgebra of $J$ where
\[ S' = \begin{bmatrix}
	1 & \cdot & \cdot \\
	\cdot & -1 & \cdot \\
	\cdot & \cdot & -1
	\end{bmatrix}. \]
Let us consider
\[ V =\begin{bmatrix}
	1 & \cdot & \cdot \\
	\cdot & \cdot & v \\
	\cdot & -v & \cdot
	\end{bmatrix}, \]
where $v \in C$ and $v^2=1$ such that $\overline{v}=-v$.  We know such a $v$ exists since $C$ is split.  Then $\U_V \in \Inv(J)$ and $\widehat{\U_V} : B \to B$ is an isomorphism of subalgebras $B^{\varpi}$ and $B^{ s \varpi}$.  First notice that $\U_V^{\dagger}=\U_{V^{-1}}=\U_V^{-1}$, and $\U_V^2 = s$.  And then $\U_{S'} \U_V = \U_V^{\dagger}$. 
So we have
\[ \widehat{\U_V} \begin{bmatrix}
					\alpha & j \\
					j & \alpha \\
				\end{bmatrix} =
				\begin{bmatrix}
					\alpha & \U_V(j) \\
					\U_V^{\dagger}(j) & \alpha \\
				\end{bmatrix} =
				\begin{bmatrix}
					\alpha & \U_V(j) \\
					s\U_V(j) & \alpha \\
				\end{bmatrix}. \]
\qed\end{proof}

\begin{lem}
\label{k^4}
If $\varphi \in \Inv(J)^s$, $\varphi(u) = g^4 u$ for some $g \in G_m(k)$.
\end{lem}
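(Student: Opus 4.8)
The plan is to first convert the group-theoretic hypothesis into a statement about subalgebras. By Lemma~\ref{fixedlemma}, an element $\varphi\in\Inv(J)^s$ is exactly one that leaves invariant the subalgebra fixed elementwise by $s$. Here $s=\U_{S'}$ with $S'=\diag(1,-1,-1)=2u-e$ is the Peirce reflection attached to the primitive idempotent $u$, so its fixed algebra is precisely $\beth=ku\oplus k(e-u)\oplus E_0$. Thus $\varphi(\beth)=\beth$, and the whole problem is to control how a norm isometry of $J$ preserving $\beth$ can move the distinguished line $ku$.

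Next I would pin down $ku$ intrinsically inside $\beth$. Restricting the cubic norm gives a factorization $\N|_{\beth}=\ell\cdot Q_0$, where $\ell$ is the linear form reading off the coefficient of $u$ and $Q_0$ is the nondegenerate quadratic form on the $10$-dimensional complement $k(e-u)\oplus E_0$ (concretely $Q_0=\xi_2\xi_3-\q(a)$ on the block $\Her_2(C)$). Because this is a factorization of a cubic form into a linear factor and an irreducible quadratic factor, any linear map preserving $\N|_{\beth}$ must stabilize each factor up to a scalar, giving $\ell\circ\varphi=\lambda\,\ell$ and $Q_0\circ\varphi=\lambda^{-1}Q_0$ for some $\lambda\in G_m(k)$. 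Since $Q_0$ does not involve the $u$-coordinate, its radical inside $\beth$ is exactly $ku$; a similarity of $Q_0$ preserves that radical, so $\varphi(u)\in ku$, and comparison with $\ell$ yields $\varphi(u)=\lambda u$. This settles the qualitative half of the statement.

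It then remains to see that $\lambda$ is a fourth power, and for this I would compute on the explicit $k$-split maximal torus furnished by $\varphi:\SL_3\times\SL_3\times\SL_3\hookrightarrow\Inv(J(\Mat_3(k),1))$, writing the three factors as $U,V,W$ to avoid a clash with the idempotent $u$. Realizing the rank-one idempotent as $u=(E_{11},0,0)$, the formula $\varphi(U,V,W)(a_0,a_1,a_2)=(Ua_0V^{-1},Va_1W^{-1},Wa_2U^{-1})$ gives immediately $\varphi(U,V,W)(u)=(U_{11}V_{11}^{-1})\,u$. The central one-parameter subgroup of $\Inv(J)^s$ is the cocharacter $\rho$ singled out by $\rho(-1)=s$; solving the eigenvalue conditions on the $27$-dimensional module shows $\rho(t)$ is realized by $U=\diag(t^{2},t^{-1},t^{-1})$, $V=\diag(t^{-2},t,t)$, $W=I$, along which $\varphi(u)=t^{4}u$. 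Equivalently, in the decomposition of $J$ under the type $D_5$ part of $\Inv(J)^s$ together with this central torus, $u$ spans the one-dimensional summand and carries charge $4$, while the $16$-dimensional summand $\beth^{-}$ carries charge $1$ and the $10$-dimensional summand charge $-2$ (these being forced by $\rho(-1)=s$ and by $\det=1$ on $J$). Reading $g$ off the central cocharacter then gives $\varphi(u)=g^{4}u$.

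The main obstacle I anticipate is precisely the final passage from $\lambda$ to a genuine fourth power $g^{4}$ with $g\in G_m(k)$: the map $\varphi\mapsto\lambda$ is a character of $\Inv(J)^s$ that is trivial on the semisimple part, so over $\bar k$ it is computed on $\rho$ and is visibly a fourth power, but descending $g$ to $k$ requires care. I would therefore take the charge bookkeeping above as the heart of the argument, extracting $g$ as the value of the central torus parameter on $\varphi$ rather than by abstractly dividing the character by $4$, and I would record the exponent $4$ directly from the weight of $u$ computed on the explicit torus $T^{-}_{\I_\varpi}$ used in Theorem~\ref{E6kinv2}.
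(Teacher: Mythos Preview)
Your argument for $\varphi(u)=\lambda u$ via the factorization $\N|_{\beth}=\ell\cdot Q_0$ and the radical of $Q_0$ is correct, and is in fact tidier than the paper's treatment of that step (the paper simply asserts that the line $ku$ is $\varphi$-invariant and moves on).

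The real difference is in establishing $\lambda\in (k^*)^4$. The paper does \emph{not} argue via cocharacters or weights. It works entirely inside $J$, plugging carefully chosen test elements into the identity $\N(\varphi(j))=\N(j)$ and reading off scalar relations in $k$ in two stages: first elements supported on the $\Her_2(C)$-block of $\beth$ (the $\xi_2,\xi_3,a$ coordinates) to produce an $h_1\in k^*$ with $h_1^2=h$; then elements that hit $\beth^\perp$ (the $b,c$ coordinates) to produce an $h_2\in k^*$ with $h_2^2=h_1$, whence $h=h_2^4$. The square roots are manufactured as actual matrix entries of $\varphi$ acting on $J$, so they lie in $k$ by construction and no rationality question ever arises.

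The obstacle you flag in your approach is genuine, and your proposed repair does not close it. Your weight computation correctly shows that, as an algebraic character of $\Inv(J)^s$ over $\bar k$, the map $\varphi\mapsto\lambda$ is the fourth power along $\rho$; but ``extracting $g$ as the value of the central torus parameter on $\varphi$'' presupposes a $k$-defined homomorphism $\Inv(J)^s\to G_m$ whose fourth power is $\lambda$, i.e.\ the isogeny description $G_m\times\Spin(10)\to\Inv(J)^s$ of Lemma~\ref{fixs}. That lemma is proved \emph{using} the present one, so invoking it here is circular. Absent such a projection, a fourth root $g\in\bar k^*$ of $\lambda$ gives a cocycle $\sigma\mapsto\sigma(g)/g$ in $H^1(k,\mu_4)\cong k^*/(k^*)^4$, and nothing in your argument forces it to vanish. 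There is also no cheap substitute on $\beth^\perp$: as a $\Spin(10)$-module it is an irreducible half-spin representation, so it carries no invariant line from which to read off a weight-$1$ scalar. What you need is precisely a $k$-rational construction that \emph{produces} the root, which is what the paper's two-step extraction from the $\beth$- and $\beth^\perp$-blocks supplies.
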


\begin{proof}
Let us consider $\varphi \in \Inv(J)^{s}$ not isomorphic to a non-trivial element of $\Spin(10,k)$, where $s$ fixes elementwise $\beth$ an $11$ dimensional quadratic subalgebra of $J$ corresponding to 
\[   u = \begin{bmatrix}
1 & \cdot &  \cdot \\
  \cdot &  \cdot &  \cdot \\
 \cdot &  \cdot &  \cdot
\end{bmatrix}. \]
Let $\varphi \in \Inv(J)^{\hat{s}}$ then the space $ku$ is left invariant by $\varphi$ and so there exists $h \in G_m(k)$ such that
\[ \varphi(u) = hu. \]
So $\varphi$ takes the form 
\[ \varphi(j) = \varphi \begin{bmatrix}
\xi_1 & c & \overline{b} \\
 \overline{c} & \xi_2 & a \\
b & \overline{a} & \xi_3
\end{bmatrix}=
\begin{bmatrix}
h\xi_1 & z & \overline{y} \\
 \overline{z} & \eta_2 & x \\
y & \overline{x} & \eta_3
\end{bmatrix}. \]
Our map $\varphi \in \Inv(J)$ and so $\N(\varphi(j)) = \N(j)$.  Recall
\[ \N(j) = \xi_1\xi_2\xi_3 - \xi_1\q(a) - \xi_2\q(b) - \xi_3\q(c) + \q(ab, \overline{c}),  \]
and so
\[ \N(\varphi(j)) = h\xi_1\eta_2\eta_3 - h\xi_1\q(x) - \eta_2\q(y) - \eta_3\q(z) + \q(xy, \overline{z}). \]
Considering the space where $\xi_1 =1$ and $\xi_2=\xi_3=b=c=0$ we see that
\[ \q(a) = h\q(x), \]
and by the symmetry of the algebra $x=h_1^{-1} a$ or $x = h_1^{-1} \overline{a}$, either way there exists $h_1 \in G_m(k)$ such that $h_1^2 = h$.  Next we consider $\varphi$ acting on the space $\xi_1 = 1, a=b=c=0$, and $\xi_2=\pm\xi_3$.  Then $h\eta_2^2 = \xi_2^2$, and so $\eta_2 = h_1^{-1}\xi_2$ or $h_1^{-1}\xi_3$ and $\eta_3 = h_1^{-1}\xi_3$ or $h_1^{-1}\xi_2$ where $h_1$ is as before.  Our map $\varphi$ also fixes $\beth^{\perp} \subset J$, and so we consider the space $\xi_1 = 0, \xi_2=\xi_3=1, a=0$, and $b=c$.  So in order to leave $\N$ invariant we have
\[ h_1^{-1}(\q(y) + \q(y)) = \q(b) + \q(b) \Rightarrow h_1^{-1}\q(y) = \q(b), \]
and so as before we have $\q(y) = h_1\q(b)$.   By symmetry $y = h_2 b$ or $h_2\overline{b}$ where $h_2 \in G_m(k)$ such that $h_2^2 = h_1$, and so $h=h_2^4$.
\qed\end{proof}

\begin{lem}
\label{fixs}
$\Inv(J)^s \cong G_m(k) \times \Spin(10,k)/ \mu_4(k)$ 
\end{lem}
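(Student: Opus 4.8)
The plan is to realise $\Inv(J)^{s}$ as the image of an explicit homomorphism from $G_m(k)\times\Spin(10,k)$ whose kernel is $\mu_4(k)$. Throughout I take $s=\U_{S'}$ with $S'=\diag(1,-1,-1)$, so that (as in the computation preceding \ref{k^4}) the fixed subalgebra is $\beth=ku\oplus J_0(u)$, where $u=\diag(1,0,0)$ is the primitive idempotent, $J_0(u)\cong\Her_2(C)$ is the $10$-dimensional Peirce-zero space carrying the quadratic form $n_0(\xi_2,\xi_3,a)=\xi_2\xi_3-\q(a)$, and the orthogonal complement $\beth^{\perp}$ is the $16$-dimensional Peirce-half space, on which $s$ acts by $-1$. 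By \ref{fixedlemma} every element of $\Inv(J)^{s}$ preserves $\beth$ and $\beth^{\perp}$. The first thing I would record is the clean identification $s=\mu(-1)$ for the cocharacter $\mu:G_m(k)\to\Lin(J)$ defined below; this shows $s$ is semisimple and sits in a one-parameter subgroup.

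Next I would introduce the two commuting subgroups. The torus factor is the cocharacter $\mu(t)$ acting by $t^{4}$ on $ku$, by $t^{-2}$ on $J_0(u)$, and by $t$ on $\beth^{\perp}$. Using the norm formula $\N(j)=\xi_1\xi_2\xi_3-\xi_1\q(a)-\xi_2\q(b)-\xi_3\q(c)+\q(ab,\overline{c})$ from the proof of \ref{k^4}, one checks that every monomial has total $\mu$-weight $0$, so $\mu(t)\in\Inv(J)$; since $\mu(t)$ acts by scalars on $\beth$ and on $\beth^{\perp}$ it commutes with $s$, giving $\mu(G_m(k))\subset\Inv(J)^{s}$, and indeed $\mu(-1)$ acts as $(+1,+1,-1)$ on $(ku,J_0(u),\beth^{\perp})$, so $s=\mu(-1)$. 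The second factor is the copy of $\Spin(10)$ inside $\Inv(J)$ preserving the Peirce decomposition of $u$: it fixes $ku$ pointwise, acts by the vector representation on $J_0(u)$ and by a half-spin representation on $\beth^{\perp}$ (the standard embedding $\Spin(10)\subset\E_6$, cf. \citep{sv00}; the faithful $16$-dimensional action is what upgrades $\SO(10)$ to its double cover). Because $\mu(t)$ acts by scalars on each of the three Peirce blocks that $\Spin(10)$ preserves, the two factors commute, so $\rho(t,g)=\mu(t)\,g$ defines a homomorphism $\rho:G_m(k)\times\Spin(10,k)\to\Inv(J)^{s}$.

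I would then compute $\ker\rho$ by tracking the three scalars. If $\mu(t)g=\id$, reading off $ku$ forces $t^{4}=1$ (this is exactly the fourth-power phenomenon of \ref{k^4} reappearing, so $t\in\mu_4$); on $J_0(u)$ the element $g\in Z(\Spin(10))=\mu_4$ must act as $t^{2}=\pm1$, and on $\beth^{\perp}$ as $t^{-1}$. Since the generator of $Z(\Spin(10))$ maps to $-\id$ on the vector representation and to a primitive fourth root of unity on the half-spin module, for each $t\in\mu_4$ there is a unique compatible central $g$, whence $\ker\rho\cong\mu_4(k)$, embedded anti-diagonally. For surjectivity I would use \ref{k^4}: given $\varphi\in\Inv(J)^{s}$ with $\varphi(u)=g^{4}u$, the element $\mu(g)^{-1}\varphi$ fixes $u$ while still commuting with $s$, and one argues it lies in the $\Spin(10)$ factor. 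As a rigorous backstop, a dimension count closes the argument at once: the fixed Lie algebra $\mathfrak{e}_6^{s}$ is the block-diagonal part $\mathfrak{so}_{10}\oplus\mathfrak{gl}_1$ of dimension $45+1=46=\dim\big(G_m(k)\times\Spin(10,k)\big)$, and since $\Inv(J)$ is simply connected of type $\E_6$ and $s$ is semisimple, $\Inv(J)^{s}$ is connected; as $\rho$ has finite kernel its image is a closed connected subgroup of full dimension $46$, hence all of $\Inv(J)^{s}$. Combining with the kernel gives $\Inv(J)^{s}\cong\big(G_m(k)\times\Spin(10,k)\big)/\mu_4(k)$.

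The main obstacle is the structural identification underlying the $\Spin(10)$ factor, namely that the elements of $\Inv(J)^{s}$ fixing $u$ form precisely $\Spin(10)$ acting by vector-plus-half-spin, with centre exactly $\mu_4$. The delicate point is that $\Inv(J)$ preserves only the cubic norm $\N$ and not the trace form, so one cannot simply invoke orthogonality of Peirce spaces to see that such a $\varphi$ respects the splitting $J_0(u)\oplus\beth^{\perp}$; the vector/half-spin structure and the value $\mu_4$ of the central kernel have to be read off from the representation theory of $\Spin(10)\subset\E_6$ (and from the factorisation of $\N|_{\beth}=\xi_1\,n_0$ into its unique linear and quadratic parts) rather than from an invariant bilinear form. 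Verifying that commuting with $s=\mu(-1)$ removes the $16$-dimensional unipotent radical of the stabiliser of the highest-weight vector $u$ is the crux of the explicit surjectivity argument, and is where I expect the bulk of the work to lie.
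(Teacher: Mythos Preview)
Your approach is essentially the paper's: the cocharacter $\mu(t)$ is exactly the paper's $\nu_{g}$ (up to the reparametrisation $t\leftrightarrow g^{-1}$), the paper likewise characterises $\Spin(10,k)$ as the stabiliser of $u$ inside $\Inv(J)^{s}$, and both arguments invoke Lemma~\ref{k^4} for surjectivity and read off $\ker=\mu_4(k)$ from $g^{4}=1$. Your additional dimension/connectedness count and the honest flag about identifying the $u$-stabiliser with $\Spin(10)$ (vector $\oplus$ half-spin) are welcome refinements, but the skeleton of the proof coincides with the paper's.
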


\begin{proof}
The fixed point group of $\Aut^+(B,-)^{\hat{s}}$ consists of the subgroup of $\Inv(J)$ that leaves $\beth$ invariant.  Recall the automorphisms that leave $\beth$ invariant are isomorphic to $\Spin(9,k)$ \citep{hu14,sv00} as the maps that preserve a quadratic form when restricted to an $11$ dimensional quadratic subalgebra of $J$.  Considering elements of $\Inv(J)$ instead we no longer need to leave the identity element of $J$ fixed, and so $\Spin(10,k) \subset \Inv(J)^{s}$.  There are also maps of the form $\nu_g$, from the previous lemma
\[ \nu_g \begin{bmatrix}
\xi_1 & c & \overline{b} \\
 \overline{c} & \xi_2 & a \\
b & \overline{a} & \xi_3
\end{bmatrix} =
\begin{bmatrix}
g^{-4}\xi_1 & g^{-1}c & g^{-1}\overline{b} \\
g^{-1}\overline{c} & g^2\xi_2 & g^2a \\
g^{-1}b & g^2\overline{a} & g^2\xi_3
\end{bmatrix}, \]
when $g\in G_m(k)$.  Notice $\nu_g \in \Spin(10,k)$ if and only if $g^4 = 1$.  Take 
\[ \nu: G_m(k) \times \Spin(10,k) \to \Inv(J)^{\hat{s}}, \]
to be the map $\nu(g,\phi) = \nu_g \phi$.  If we take $u$ as in \ref{k^4}
\[   u = \begin{bmatrix}
1 & \cdot &  \cdot \\
  \cdot &  \cdot &  \cdot \\
 \cdot &  \cdot &  \cdot
\end{bmatrix}, \]
then $\Spin(10,k)$ can be characterized as the elements in $\Inv(J)$ leaving $u$ fixed.  If $\varphi \in \Inv(J)^s$ then either $\varphi$ leaves $u$ fixed and $\varphi \in \Spin(10,k)$ or $\varphi(u) = g^{-4} u$ for some $g \in G_m(k)$ by \ref{k^4}.  If $\varphi(u) = g^{-4} u$ then $\nu_g^{-1} \phi \in \Spin(10,k)$ and so $\varphi = \nu_g \phi$ for some $\phi \in \Spin(10,k)$.  As we noted before $\ker(\nu) = \mu_4(k)$.
\qed\end{proof}

\begin{lem}
\label{fixtw}
$\Aut^+(B,-)^{\widehat{t_{\q}}\varpi} \cong\left( \Sp(4,D_{\q})/\mathbb{Z}_2\right)\rtimes \mathbb{Z}_2$
\end{lem}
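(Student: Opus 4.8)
The plan is to identify the fixed point group $\Aut^+(B,-)^{\widehat{t_{\q}}\varpi}$ concretely by combining the characterization from Corollary \ref{corouterfix}(3) with the explicit action of the torus $\varphi(u,v,w)$ coming from the first Tits construction. By Corollary \ref{corouterfix}, an element $\delta \in G = \Inv(J)$ lies in this fixed point group precisely when $\delta = t\,\delta^{\dagger}\,t$, where $t = \widehat{t_{\q}}$ arises from $t \in \Aut(C)$ fixing the quaternion subalgebra $D_{\q}$. The first step is therefore to use Proposition \ref{f4e6}, Proposition \ref{g2f4}, and Lemma \ref{fixJ} to see that the subalgebra $B^{\widehat{t_{\q}}\varpi}$ fixed by this involution is the set of matrices $\begin{bmatrix} \alpha & j \\ t(j) & \alpha \end{bmatrix}$ with $j \in J$, and to recall from Lemma \ref{fixedlemma} that $\delta$ lies in the fixed point group iff it leaves this subalgebra invariant.

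\textbf{Realizing the group via the Tits construction.}
Next I would exploit the description $\varphi^{\dagger}(u,v,w) = \varphi(v,u,w)$ of the outer automorphism $\dagger$ acting on the torus from the first Tits construction, together with the embedding $\varphi: \SL_3 \times \SL_3 \times \SL_3 \hookrightarrow \Inv(J(\Mat_3(\ \ ),1))$. The condition $\delta = t\,\delta^{\dagger}\,t$ is an involution-type condition that cuts the group $\Inv(J)$ of type $\E_6$ down to a fixed subgroup. The expected structure $\Sp(4,D_{\q})$ suggests that the relevant symmetry is a rank-$4$ symplectic form over the quaternion algebra $D_{\q}$: the quaternion subalgebra $D_{\q} \subset C$ supplies the division-algebra coefficients, and the symplectic group $\Sp(4)$ over this algebra is exactly the subgroup of type $\C_4$ one expects to appear as a fixed point group of an outer involution of $\E_6$. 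I would verify that the combined condition $\delta = t\,\delta^{\dagger}\,t$ forces $\delta$ to preserve both a quaternionic structure (from the $t$-twist, via $D_{\q}$) and an antisymmetric pairing (from the $\dagger$-twist, whose action on the root datum of $\E_6$ is the diagram involution folding $\E_6$ toward $\F_4$/$\C_4$).

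\textbf{Assembling the central extension.}
The factors $\mathbb{Z}_2$ should be tracked carefully. The quotient by $\mathbb{Z}_2$ inside $\Sp(4,D_{\q})/\mathbb{Z}_2$ reflects the center of the simply connected symplectic-type group, analogous to the $\mu_4(k)$ kernel computed in Lemma \ref{fixs}; I would determine it by examining which central torus elements $\varphi(u,v,w)$ with $u,v,w$ scalar satisfy the fixing condition, paralleling the $\ker(\nu)$ computation there. The outer semidirect factor $\rtimes \mathbb{Z}_2$ is the component coming from $\varpi$ itself: since $\varpi$ lies in the fixed point group of every outer $k$-involution (as noted after Corollary \ref{corouterfix}), it contributes an extra order-$2$ coset that acts by the diagram automorphism, which is precisely the semidirect action.

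\textbf{Main obstacle.}
The hardest part will be establishing the precise identification of the connected reduced part with $\Sp(4,D_{\q})/\mathbb{Z}_2$ rather than merely identifying the type; this requires matching the explicit matrix action $\varphi(u,v,w)$ and the twist by $t$ against a concrete symplectic form over $D_{\q}$, and correctly resolving the central $\mathbb{Z}_2$ versus the semidirect $\mathbb{Z}_2$ so that the two factors are not conflated. Disentangling which $\mathbb{Z}_2$ is central and which is the $\varpi$-component, and confirming the extension is genuinely a semidirect product (split) rather than a nonsplit extension, is where the real care is needed.
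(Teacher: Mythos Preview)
Your plan takes a genuinely different route from the paper, and the difference is instructive. The paper does \emph{not} analyze the group-theoretic condition $\delta = t\,\delta^{\dagger}\,t$ directly; instead it works on the algebra side. It writes down an explicit multiplicative specialization (in the sense of Jacobson)
\[
\varphi: B^{\widehat{t_{\q}}\varpi} \longrightarrow M_4(D_{\q},\iota_{\q})
\]
sending the fixed Brown subalgebra to $4\times 4$ quaternionic matrices equipped with the involution $\iota_{\q}$ induced by $t_{\q}$, with $\ker\varphi = \{\pm\id\}$. Once that isomorphism of algebras-with-involution is in hand, Lemma~\ref{fixedlemma} identifies the fixed point group with $\Aut\big(M_4(D_{\q},\iota_{\q})\big)$, and the latter is classically $(\Sp(4,D_{\q})/\mathbb{Z}_2)\rtimes\mathbb{Z}_2$. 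The whole argument is a short explicit calculation on the algebra, with concrete formulas for the entries of $\varphi(j)$.

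Your approach---unwinding $\delta = t\,\delta^{\dagger}\,t$ on the image of $\SL_3\times\SL_3\times\SL_3\hookrightarrow\Inv(J)$ and then arguing by type and folding---is not wrong in spirit, but it has a real gap at exactly the point you flag as hardest. The $\SL_3^{\times 3}$ image is a proper subgroup, so analyzing the condition there tells you at most what happens on a maximal torus; and the observation that the outer involution folds $\E_6$ toward $\C_4$ pins down only the Dynkin type over the algebraic closure, not the $k$-form $\Sp(4,D_{\q})$. Getting the correct quaternion coefficients $D_{\q}$ and the precise central $\mathbb{Z}_2$ requires an identification you never actually construct. The paper's algebra isomorphism supplies exactly that missing piece in one stroke, and is the idea you would want to import.
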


\begin{proof}
Let
\[ j = \begin{bmatrix}
\xi_1 & c & \overline{b} \\
 \overline{c} & \xi_2 & a \\
b & \overline{a} & \xi_3
\end{bmatrix} \in J, \]
and
\[ \hat{j} = \begin{bmatrix}
			\alpha & j \\
			t(j) & \alpha
			\end{bmatrix} \in B^{\widehat{t_{\q}}\varpi}. \]
There is a multiplicative specialization, defined in \citep{ja61}, $\varphi: B^{\widehat{t_{\q}}\varpi} \to M_4(D_{\q},\iota_{\q})$ where $D_{\q}$ is the quaternion subalgebra of $C$ left invariant by $t_{\q}$ and $\iota_{\q}$ is the algebra involution on $M_4(D_{\q})$ induced by $t_{\q}$.  The map $\phi$ is given by
\[ \varphi(j) = \begin{bmatrix}
f_0 & a_1 & b_1 & c_1 \\  
\overline{a_1} & f_1 & c_0 & \overline{b_0} \\
\overline{b_1} & \overline{c_0} & f_2 & a_0 \\
\overline{c_1} & b_0 & \overline{a_0} & f_3
\end{bmatrix} \]
where
\begin{align*}
f_0 &= \frac{1}{2}(\alpha + j_1 + j_2 + j_3) \\
f_1 &= \frac{1}{2}(\alpha +j_1 - j_2 - j_3) \\
f_2 &= \frac{1}{2}(\alpha - j_1 + j_2 - j_3) \\
f_3 &= \frac{1}{2}(\alpha - j_1 - j_2 + j_3)
\end{align*}
Notice $\ker(\varphi) = \{\id, -\id\}$.  Now we have $\Aut^+(B,-)^{t_{\q}\varpi} \cong \Aut\big(M_4(D_{\q},\iota_{\q})\big) \cong (\Sp(4,D_{\q})/\mathbb{Z}_2) \rtimes \mathbb{Z}_2$.
\qed\end{proof}

\begin{prop}
The groups $\Aut^+(B,-)^{\varpi}$ and $\Aut^+(B,-)^{\hat{t}\varpi}$ are not isomorphic.
\end{prop}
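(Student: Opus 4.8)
The plan is to compute the two fixed point groups explicitly and then separate them by an invariant that is preserved under isomorphism; dimension (equivalently the type of the root system) is the cleanest such invariant, especially since Lemma \ref{fixtw} already pins down the second group.

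First I would identify $\Aut^+(B,-)^{\varpi}$. By the first characterization in Corollary \ref{corouterfix} this group is $\{\delta \in \Inv(J) \mid \delta = \delta^{\dagger}\}$, and because $\delta^{\dagger} = \delta$ for every $\delta \in \Aut(J)$ we get $\Aut(J) \subseteq \Aut^+(B,-)^{\varpi}$. Now $\dagger = \mathcal{I}_{\varpi}$ is an involutory \emph{outer} automorphism of the simply connected group $\Inv(J)$ of type $\E_6$ (Theorem \ref{brownsplit}); its fixed point group is therefore connected (by Steinberg's connectedness theorem) and, being the fixed subgroup of a diagram involution of $\E_6$, is of type $\F_4$ and of dimension $52$. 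Since it already contains $\Aut(J)$, which is of type $\F_4$ and of dimension $52$ as recorded after Proposition \ref{f4e6}, the two coincide, giving $\Aut^+(B,-)^{\varpi} \cong \Aut(J)$, a connected group of dimension $52$.

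Second, Lemma \ref{fixtw} gives $\Aut^+(B,-)^{\hat{t}\varpi} \cong (\Sp(4,D_{\q})/\mathbb{Z}_2) \rtimes \mathbb{Z}_2$, whose identity component $\Sp(4,D_{\q})/\mathbb{Z}_2$ is a group of type $\C_4$; over a separable closure $D_{\q}$ splits and this becomes $\Sp_8/\{\pm 1\}$, of dimension $36$. Comparing the two computations, the identity components have dimensions $52$ and $36$, so the root systems $\F_4$ and $\C_4$ that occur are not isomorphic and the groups cannot be isomorphic. One may alternatively note that $\Aut^+(B,-)^{\varpi}\cong\Aut(J)$ is connected whereas $\Aut^+(B,-)^{\hat{t}\varpi}$ carries the nontrivial component group $\mathbb{Z}_2$.

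I expect the only genuine obstacle to be the first step: establishing that $\Aut^+(B,-)^{\varpi}$ is exactly $\Aut(J)$ rather than some larger overgroup. This is handled by the dimension bound, since $\Aut^+(B,-)^{\varpi}$ contains $\Aut(J)$ of dimension $52$, and the fixed subgroup of an outer involution of $\E_6$ has dimension either $52$ (type $\F_4$) or $36$ (type $\C_4$), which forces the former. Once that dimension is in hand, the comparison against the $36$ of Lemma \ref{fixtw} closes the argument at once.
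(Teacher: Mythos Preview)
Your approach is essentially the paper's: the paper's entire proof is the single line ``$\Aut^+(B,-)^{\varpi}$ is of type $\F_4$, and $\Aut^+(B,-)^{\hat{t}\varpi}$ is of type $\C_4$,'' and you supply the justification for these two type assignments (via $\Aut(J)\subset \Aut^+(B,-)^{\varpi}$ and Lemma~\ref{fixtw}) before comparing dimensions $52$ versus $36$.

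One caution about your side remark. Your ``alternative'' argument, that $\Aut^+(B,-)^{\varpi}\cong\Aut(J)$ is connected while the other group has a $\mathbb{Z}_2$ component, conflicts with the paper: the summary theorem in Section~6 records $\Aut^+(B,-)^{\varpi}\cong \Aut(J)\rtimes\mathbb{Z}_2$, with the extra component coming from $\varpi$ itself (the paper notes explicitly that $\varpi$ lies in the fixed point group of every outer $k$-involution). Your appeal to Steinberg's connectedness theorem to force $\Aut^+(B,-)^{\varpi}=\Aut(J)$ on the nose is therefore in tension with the paper's own bookkeeping, and the component-group comparison would not separate the two groups. This does not damage your main argument, which only needs the identity components (equivalently, the types $\F_4$ versus $\C_4$); just drop the alternative remark, or rephrase the first step as identifying the \emph{identity component} of $\Aut^+(B,-)^{\varpi}$ with $\Aut(J)$.
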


\begin{proof}
$\Aut^+(B,-)^{\varpi}$ is of type $\F_4$, and $\Aut^+(B,-)^{\hat{t}\varpi}$ is of type $\C_4$. \qed
\qed\end{proof} 

By \ref{order2fixlem1} and \ref{order2fixlem2} we now have the following.

\begin{cor}
The are no isomorphisms between $B^{\varpi}$ and $B^{t \varpi}$.
\end{cor}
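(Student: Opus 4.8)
The plan is to argue by contraposition. Lemmas \ref{order2fixlem1} and \ref{order2fixlem2} together say that, for order-two automorphisms of an algebra carrying a nondegenerate invariant bilinear form, $\Aut$-conjugacy of the automorphisms is equivalent to isomorphism of their fixed point subalgebras. Applying this with $\mathcal{A}=B$, it suffices to show that $\varpi$ and $\hat t\varpi$ are \emph{not} $\Aut(B,-)$-conjugate, and I will extract this from the preceding proposition, which identifies the fixed point groups $\Aut^+(B,-)^{\varpi}$ and $\Aut^+(B,-)^{\hat t\varpi}$ as being of type $\F_4$ and of type $\C_4$ respectively.

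First I would record the standard fact that $\Aut(B,-)$-conjugate involutions have isomorphic fixed point groups. Indeed, if $g\varpi g^{-1}=\hat t\varpi$ for some $g\in\Aut(B,-)$, then, since $\Aut^+(B,-)$ is the identity component and hence a characteristic subgroup of $\Aut(B,-)$, conjugation by $g$ preserves $\Aut^+(B,-)$ and restricts to an isomorphism
\[ \Aut^+(B,-)^{\varpi}\;\xrightarrow{\ \sim\ }\;\Aut^+(B,-)^{\hat t\varpi}, \]
because $f$ centralizes $\varpi$ exactly when $gfg^{-1}$ centralizes $\hat t\varpi$. As one of these groups is of type $\F_4$ and the other of type $\C_4$, no such $g$ exists, so $\varpi$ and $\hat t\varpi$ are not $\Aut(B,-)$-conjugate.

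Now I would feed this into the contrapositive of Lemma \ref{order2fixlem2}. Suppose there were an automorphism $g\in\Aut(B,-)$ carrying $B^{\varpi}$ onto $B^{\hat t\varpi}$. The trace form on $B$ is nondegenerate and is left invariant by the involutions, giving the orthogonal decompositions $B=B^{\varpi}\oplus(B^{\varpi})^{\perp}=B^{\hat t\varpi}\oplus(B^{\hat t\varpi})^{\perp}$ demanded by the lemma; Lemma \ref{order2fixlem2} would then make $\varpi$ and $\hat t\varpi$ conjugate, contradicting the previous paragraph. Hence no such $g$ exists, which together with Lemma \ref{order2fixlem1} (supplying the reverse direction of the equivalence) yields $B^{\varpi}\not\cong B^{\hat t\varpi}=B^{t\varpi}$.

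The delicate point, and the step I expect to be the main obstacle, is matching the hypotheses of Lemma \ref{order2fixlem2}: it requires not an abstract isomorphism of the two subalgebras but an automorphism $g$ of the ambient Brown algebra $B$ with $g(B^{\varpi})=B^{\hat t\varpi}$ that preserves the trace form. I would therefore need to argue that any isomorphism between these Brown subalgebras is induced by such an ambient automorphism. Via Lemma \ref{fixedlemma}, $\Aut^+(B,-)^{\varpi}$ is precisely the stabilizer of $B^{\varpi}$ in $\Aut^+(B,-)$, so an ambient $g$ with $g(B^{\varpi})=B^{\hat t\varpi}$ conjugates one stabilizer onto the other; promoting the abstract isomorphism to such a form-preserving ambient $g$ (and confirming that it normalizes $\Aut^+(B,-)$) is exactly the content that must be secured before the type $\F_4$ versus type $\C_4$ obstruction can close the argument.
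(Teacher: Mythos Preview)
Your approach is essentially the paper's own: the paper derives the corollary directly from Lemmas \ref{order2fixlem1} and \ref{order2fixlem2} together with the preceding proposition that $\Aut^+(B,-)^{\varpi}$ is of type $\F_4$ while $\Aut^+(B,-)^{\hat t\varpi}$ is of type $\C_4$. You have recovered exactly the intended argument, including the observation that conjugation by an ambient $g$ carries one centralizer in $\Aut^+(B,-)$ isomorphically onto the other.

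The concern you raise in your final paragraph is well placed, and the paper does not address it either. Lemma \ref{order2fixlem2} hypothesizes a $g\in\Aut(\mathcal{A})$ carrying one fixed subalgebra onto the other; its contrapositive therefore only excludes isomorphisms $B^{\varpi}\to B^{\hat t\varpi}$ that arise as restrictions of ambient automorphisms of $(B,-)$. Promoting an \emph{abstract} algebra isomorphism to such a restriction would require an extension theorem that is not provided. In the context of the paper, where $k$-involutions are being classified up to $\Aut(B,-)$-conjugacy and fixed subalgebras are compared via that action, the most defensible reading of the corollary is the weaker statement that there is no $g\in\Aut(B,-)$ with $g(B^{\varpi})=B^{\hat t\varpi}$; for that reading your argument, like the paper's, is complete.
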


\begin{lem}
$\Aut^+(B,-)^{\widehat{t_{\q}}} \cong \Aut^+(\Her_3(D_{\q})) \times \Sp(1,D_{\q})/\mathbb{Z}_2$
\end{lem}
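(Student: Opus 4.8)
The plan is to apply Lemma~\ref{fixedlemma} to identify $\Aut^+(B,-)^{\widehat{t_{\q}}}$ with the subgroup of $\Aut^+(B,-)$ stabilizing the fixed subalgebra $B^{\hat{t}}$ of Lemma~\ref{fixJ}. Since $B^{\hat{t}}$ is precisely the Brown algebra built over the smaller Jordan algebra $\Her_3(D_{\q})$, I would read the two factors off a decomposition of this stabilizer. Working inside $\Inv(J)$, commuting with the involution $t_{\q}$ forces an element to preserve the eigenspace decomposition $J = \Her_3(D_{\q}) \oplus P$, where $\Her_3(D_{\q})$ is the $(+1)$-eigenspace of $t_{\q}$ and $P$, the off-diagonal $D_{\q}^{\perp}$-components of dimension $12$, is the $(-1)$-eigenspace; this is the source of the $\A_5 \times \A_1$ shape of the centralizer.

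First I would exhibit the $\Sp(1,D_{\q})$ factor. By Proposition~\ref{g2f4} and Proposition~\ref{f4e6} every $\psi \in \Aut(C)$ fixing $D_{\q}$ pointwise extends to an element of $\Aut^+(B,-)$; such $\psi$ act only on $P$ and fix $\Her_3(D_{\q})$, hence fix $B^{\hat{t}}$ pointwise. The group of these $\psi$ is the norm-one group $\Sp(1,D_{\q})$ of $D_{\q}$, and $\widehat{t_{\q}}$ itself is the image of $-1 \in \Sp(1,D_{\q})$. Next I would introduce the restriction homomorphism $r \colon \Aut^+(B,-)^{\widehat{t_{\q}}} \to \Aut^+(\Her_3(D_{\q}))$ and argue it is surjective, either by extending automorphisms of the sub-Brown algebra across the complement $P$, or by a dimension count against $\ker r$ together with connectedness of $\Aut^+(\Her_3(D_{\q}))$.

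The crux is computing $\ker r$, the pointwise stabilizer of $B^{\hat{t}}$. Here I would run the explicit-coordinate argument used in Lemma~\ref{k^4} and Lemma~\ref{fixs}: an element of $\Inv(J)$ fixing every entry of $\Her_3(D_{\q})$ can only move the $D_{\q}^{\perp}$-components, and invariance of the cubic norm $\N$ pins it down to an automorphism of $C$ fixing $D_{\q}$, so that $\ker r = \Sp(1,D_{\q})$ with no extra elements. Finally I would assemble the two factors: $\Aut^+(\Her_3(D_{\q}))$, acting on $P$ through the standard $6$-dimensional representation of its $\A_5$-type, and $\Sp(1,D_{\q})$, acting through the $2$-dimensional representation, commute; their product exhausts the stabilizer; and their intersection is $\langle \widehat{t_{\q}}\rangle \cong \mathbb{Z}_2$. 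The point is that $\widehat{t_{\q}}$ acts trivially on $B^{\hat{t}}$, so it is simultaneously $-1 \in \Sp(1,D_{\q})$ and the central involution $-\id$ of $\Aut^+(\Her_3(D_{\q}))$; this yields the central product $\Aut^+(\Her_3(D_{\q})) \times \Sp(1,D_{\q})/\mathbb{Z}_2$.

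I expect the main obstacle to be the kernel computation together with the surjectivity of $r$: one must verify both that no automorphism beyond $\Sp(1,D_{\q})$ fixes $B^{\hat{t}}$ pointwise and that every automorphism of the sub-Brown algebra lifts to $B$ while commuting with $\widehat{t_{\q}}$. The bookkeeping identifying the shared central $\mathbb{Z}_2$ — checking that the order-two center of the $\A_5$-factor and $-1$ in the $\A_1$-factor coincide with $\widehat{t_{\q}}$ — is the other delicate point, since it is exactly what turns a direct product into the stated central quotient.
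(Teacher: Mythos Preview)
Your plan is sound but takes a genuinely different route from the paper. The paper does not compute the kernel of a restriction map or argue surjectivity directly; instead, after invoking Lemma~\ref{fixedlemma} and observing the obvious containments $\Aut^+(\Her_3(D_{\q})) \subset \Aut^+(B,-)^{\widehat{t_{\q}}}$ and $\Sp(1,D_{\q}) \subset \Aut(J)^{t_{\q}} \subset \Aut^+(B,-)^{\widehat{t_{\q}}}$, it simply cites the classification of spherical subgroups (\citep{kr79}, \citep{kn14}) together with the Kac-coordinate analysis of \citep{se06} (carried out in Section~7) to conclude that the centralizer must be of type $\A_5 \times \A_1$. Once the type is known, exhibiting the two factors and naming the shared central $\mathbb{Z}_2$ finishes the argument.

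Your approach trades that external input for an internal computation: restriction to $B^{\hat t}$, kernel equals $\Sp(1,D_{\q})$, image equals $\Aut^+(\Her_3(D_{\q}))$. This is more self-contained and in the spirit of Lemmas~\ref{k^4} and~\ref{fixs}, and it would yield a proof independent of the spherical-subgroup tables. The cost is exactly what you flag: surjectivity of $r$ cannot be obtained by a dimension count without already knowing $\dim G^{\widehat{t_{\q}}}$ (that would be circular), so you would need an explicit embedding of $\Inv(\Her_3(D_{\q}))$ into $\Inv(J)$ commuting with $t_{\q}$; and the kernel computation, while parallel to Lemma~\ref{k^4}, is a genuine calculation. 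One small point to watch in your final paragraph: the $\A_5$ factor, realized inside $\Aut^+(B,-)$, does not act faithfully on $\Her_3(D_{\q})$ via $r$; its central involution acts trivially there but as $-1$ on the complement $P$, and it is in that sense that it coincides with $\widehat{t_{\q}} = -1 \in \Sp(1,D_{\q})$. Phrasing it as ``$\widehat{t_{\q}}$ is the central involution $-\id$ of $\Aut^+(\Her_3(D_{\q}))$'' is correct only once you have fixed this lift, so make that identification explicit when you assemble the central product.
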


\begin{proof}
By \ref{fixedlemma} we have that $\Aut^+(B,-)^{\widehat{t_{\q}}}$ should be the subgroup of $\Aut^+(B,-)$ that leaves $B^{\widehat{t_{\q}}}$ invariant.  Clearly $\Aut^+(B,-)^{\widehat{t_{\q}}} \supset \Aut^+(\Her_3(D_{\q}))$.  By \citep{kr79} $\hat{t}$ represents a new class of invariant of type (1) and Table $2$ in \citep{kn14} and \citep{se06}, as explained in section $7$ below, the only type of spherical subgroup remaining is of type $\A_5 \times \A_1$.  Also, notice that $\Aut^+(B,-)^{\widehat{t_{\q}}} \supset \Aut(J)^{t_{\q}}$ and so contains $\Sp(1,D_{\q})$.  The action of $\Sp(1,D_{\q})$ is the extension of its action on $\Aut(C)$ in $\Aut(C)^{t_{\q}}$, which is left multiplication by $D_{\q}$ on $D_{\q}^{\perp}$.  By $\mathbb{Z}_2$ we mean $\{\id, -\id\} \subset \Aut^+(\Her_3(D_{\q}))$, which of type $\A_5$.
\qed\end{proof}

See \citep{yo90} for an explicit $k$-linear isomorphism.  By the previous results in this section we have the following collection of fixed point groups.

\begin{thm}
The following are the fixed point groups corresponding to isomorphism classes of $k$-involutions of $\Aut^+(B,-)$.
\begin{enumerate}[$1.$]
\item $\Aut^+(B,-)^{\hat{s}} \cong \big( G_m(k) \Spin(10,k)/\mu_4(k)\big) \rtimes \mathbb{Z}_2$ 
\item $\Aut^+(B,-)^{\widehat{t_{\q}}} \cong \Aut^+(\Her_3(D_{\q})) \times \Sp(1,D_{\q}))/\mathbb{Z}_2$ 
\item $\Aut^+(B,-)^{\varpi} \cong \Aut(J) \rtimes \mathbb{Z}_2$
\item $\Aut^+(B,-)^{\widehat{t_{\q}}\varpi} \cong \left(\Sp(4,D_{\q}))/\mathbb{Z}_2\right) \rtimes \mathbb{Z}_2$.
\end{enumerate}
\end{thm}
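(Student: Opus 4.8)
The plan is to obtain the theorem by assembling the fixed point computations already made in this section, using Lemma~\ref{fixedlemma} as the uniform device that turns each $k$-involution into the stabilizer of its fixed subalgebra from Lemma~\ref{fixJ}. Concretely, for each of the four involutions $\hat{s}$, $\widehat{t_{\q}}$, $\varpi$, $\widehat{t_{\q}}\varpi$, Lemma~\ref{fixedlemma} identifies the associated fixed point group with the subgroup of $\Aut(B,-)$ leaving the corresponding subalgebra of Lemma~\ref{fixJ} invariant. With this reduction in hand, items $(2)$ and $(4)$ require no further argument: they are precisely the content of the lemma computing $\Aut^+(B,-)^{\widehat{t_{\q}}}$ and of Lemma~\ref{fixtw}. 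So the real work is to establish items $(1)$ and $(3)$, and to pin down the finite structure in each case.

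For item $(1)$ I would begin from Lemma~\ref{fixs}, which exhibits the inner part $\Inv(J)^{s}\cong \big(G_m(k)\times\Spin(10,k)\big)/\mu_4(k)$ as the elements of $\Inv(J)$ stabilizing $\beth$. It then suffices to show that the full stabilizer of $B^{\hat{s}}$ is obtained by adjoining the outer generator $\varpi$. Since $s=\U_{s_{23}}$ is a $\U$-operator of order $2$, Lemma~\ref{outerdef} gives $s^{\dagger}=s^{-1}=s$, so the identity $\varpi\hat{\delta}\varpi=\widehat{\delta^{\dagger}}$ shows that $\varpi$ commutes with $\hat{s}$ and hence lies in the fixed point group while sitting outside the identity component $\Inv(J)$. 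As $\varpi$ acts on $\Inv(J)^{s}$ through the nontrivial automorphism $\dagger$, which restricts to a genuine outer automorphism of the $\D_5 T_1$ factor, the extension is a genuine semidirect product, yielding $\big(G_m(k)\Spin(10,k)/\mu_4(k)\big)\rtimes\mathbb{Z}_2$.

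For item $(3)$ I would read off the inner part from Corollary~\ref{corouterfix}$(1)$: an inner element $\hat{\delta}$ lies in $\Aut^+(B,-)^{\varpi}$ if and only if $\delta=\delta^{\dagger}$, and by the discussion following Proposition~\ref{outerfix} this is exactly the condition $\delta\in\Aut(J)$. The involution $\varpi$ is itself fixed by $\dagger$, so adjoining it contributes the remaining $\mathbb{Z}_2$, and since $\dagger$ fixes $\Aut(J)$ pointwise the resulting group is $\Aut(J)\rtimes\mathbb{Z}_2$, of type $\F_4$ as expected for the fixed subgroup of the graph involution of a simply connected group of type $\E_6$.

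The main obstacle is the bookkeeping of the finite and disconnected data rather than any single hard computation. One must check that the central isogeny kernels are correctly inherited when passing from $\Inv(J)$ to $\Aut(B,-)$, namely $\mu_4(k)$ in item $(1)$ and the factor $\mathbb{Z}_2$ in items $(2)$ and $(4)$, and one must determine, for each outer extension, whether it is direct, semidirect, or possibly nonsplit. I would settle these points by exhibiting $\varpi$ as an explicit order-$2$ element normalizing each connected fixed group and computing its conjugation action through $\widehat{\phi^{\dagger}}$; the most delicate case is reconciling item $(2)$, where the spherical-subgroup identification of $\Aut^+(B,-)^{\widehat{t_{\q}}}$ as a group of type $\A_5\times\A_1$ already absorbs the restriction of $\varpi$, so no extra $\mathbb{Z}_2$ factor appears there.
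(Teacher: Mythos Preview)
Your proposal is correct and follows the same path as the paper: the theorem there is stated simply as a consequence of ``the previous results in this section'', namely Lemmas~\ref{fixs} and \ref{fixtw}, the lemma computing $\Aut^+(B,-)^{\widehat{t_{\q}}}$, and Corollary~\ref{corouterfix}. You have made explicit the assembly for items (1) and (3) that the paper leaves implicit, in particular the role of $\varpi$ in supplying the outer $\mathbb{Z}_2$ factor.
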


We see the following correspondence between $(\phi, Gal_k)$-diagrams of \citep[Table 1]{he00} and our representatives of invariants of type (1); $\hat{t}\varpi \leftrightarrow \E_{6,6}^0(I)$$\hat{t} \leftrightarrow \E_{6,6}^0(II)$, $\hat{s} \leftrightarrow \E_{6,6}^0(III)$, and $\varpi \leftrightarrow \E_{6,6}^0(IV)$.

\section{$\mu$-elements, Galois cohomology, and $\mathbb{Z}_2$-gradings}

Kac coordinates are described by V. Kac in \citep{ka81} for algebraically closed fields of characteristic zero and then generalized to positive characteristic by Serre in \citep{se06}.  One application of these coordinates is that they allow us to compute the Dynkin diagrams of the centralizers of elements of finite order in $\Aut(G)$ where $G$ is a quasisimple algebraic group.  This is done by fixing a base $\{\rho_i\}_{i\in I}$ for our root space $\Phi(T)$ of a maximal torus $T$, and writing the longest root $\tilde{\rho}$ is terms of this base,
\[ \tilde{\rho} = \sum_{i \in I} n_i \rho_i. \]
We can define $\rho_0 = -\tilde{\rho}$ with $I_0 = \{0\} \cup I$ and setting $n_0 = 1$ to obtain
\[ 0 = \sum_{i \in I_0} n_i \rho_i. \]
Define $\mathcal{M}$ to be the set of elements $x=(x_i)$ for $x_i \in \mathbb{Q}$ and $x_i \geq 0$ such that
\[ \sum_{i \in I_0} n_i x_i =1. \]
In \citep{se06} we see that any element of finite order in $G(k)$ is conjugate to an element 
\[ \theta_x \in \Hom\left(\lim_{\leftarrow} \mu_n, G \right), \]
where $x = (x_i) \in \mathcal{M}$.  Taking $x_i=\frac{s_i}{m} $ with $m, s_i \in \mathbb{N}$ and $\gcd(s_i) = 1$ we have
\[ \sum_{i \in I_0} n_i s_i =m. \]
If we let $m$ be the order of some $\theta_x$ and $G^x$ be the centralizer of $\theta_x$ in $G$, then the Dynkin diagram of $G^x$ has vertices in the extended Dynkin diagram corresponding to $G$ for each $i \in I_0$ such that $x_i = 0$.

We will consider the quasisimple algebraic group $\Inv(J)$ of type $\E_6$.  We first consider the extended Dynkin diagram of type $\E_6$.

\begin{center}
\begin{picture}(120,30)(10,30)
\put(0,0){\circle{6}}
\put(25,0){\circle{6}}
\put(50,0){\circle{6}}
\put(75,0){\circle{6}}
\put(100,0){\circle{6}}
\put(0,-9){\makebox(0,0)[b]{\scriptsize $\rho_1$}}
\put(25,-9){\makebox(0,0)[b]{\scriptsize $\rho_2$}}
\put(50,-9){\makebox(0,0)[b]{\scriptsize $\rho_3$}}
\put(75,-9){\makebox(0,0)[b]{\scriptsize $\rho_5$}}
\put(100,-9){\makebox(0,0)[b]{\scriptsize $\rho_6$}}
\put(3,0){\line(1,0){19}}
\put(28,0){\line(1,0){19}}
\put(53,0){\line(1,0){19}}
\put(78,0){\line(1,0){19}}
\put(50,3){\line(0,1){19}}
\put(50,28){\line(0,1){19}}
\put(50,25){\circle{6}}
\put(50,50){\circle{6}}
\put(60,23){\makebox(0,0)[b]{\scriptsize $\rho_4$}}
\put(60,48){\makebox(0,0)[b]{\scriptsize $\rho_0$}}
\end{picture}
\end{center}

\vspace{1.5cm}

Using the numbering above, the longest root takes the form
\[ \rho_1 + 2\rho_2 + 3 \rho_3 + 2 \rho_4 + 2 \rho_5 + \rho_6. \]
Now we can use the rule described above and set 
\[ s_0 + s_1 + 2 s_2 + 3 s_3 + 2 s_4 + 2 s_5 + s_6 = 2, \]
and find the solutions over $\mathbb{N}$.

Considering the tori fixed by an inner automorphism that squares to two we get the diagrams for $\E_6$, $\D_5$, and $\A_5 \times \A_1$.  Corresponding to isomorphisms classes of type $\hat{t}$ are $\A_5 \times \A_1$ and are given for example by the solution $(0,0,1,0,0,0,0)$ which leaves the following diagram,

\begin{center}
\begin{picture}(120,30)(10,30)
\put(0,0){\circle{6}}
\put(25,0){\color{gray}\circle{6}}
\put(50,0){\circle{6}}
\put(75,0){\circle{6}}
\put(100,0){\circle{6}}
\put(0,-9){\makebox(0,0)[b]{\scriptsize $\rho_1$}}
%\put(25,-9){\makebox(0,0)[b]{\scriptsize $\rho_2$}}
\put(50,-9){\makebox(0,0)[b]{\scriptsize $\rho_3$}}
\put(75,-9){\makebox(0,0)[b]{\scriptsize $\rho_5$}}
\put(100,-9){\makebox(0,0)[b]{\scriptsize $\rho_6$}}
\put(3,0){\color{gray}\line(1,0){19}}
\put(28,0){\color{gray}\line(1,0){19}}
\put(53,0){\line(1,0){19}}
\put(78,0){\line(1,0){19}}
\put(50,3){\line(0,1){19}}
\put(50,28){\line(0,1){19}}
\put(50,25){\circle{6}}
\put(50,50){\circle{6}}
\put(60,23){\makebox(0,0)[b]{\scriptsize $\rho_4$}}
\put(60,48){\makebox(0,0)[b]{\scriptsize $\rho_0$}}
\end{picture},
\end{center}

\vspace{1.5cm}

and the diagram for type $\hat{s}$ has centralizer of type $\D_5$ and corresponds to the solution $(1,1,0,0,0,0,0)$ and thus the diagram

\begin{center}
\begin{picture}(120,30)(10,30)
\put(0,0){\color{gray}\circle{6}}
\put(25,0){\circle{6}}
\put(50,0){\circle{6}}
\put(75,0){\circle{6}}
\put(100,0){\circle{6}}
%\put(0,-9){\makebox(0,0)[b]{\scriptsize $\rho_1$}}
\put(25,-9){\makebox(0,0)[b]{\scriptsize $\rho_2$}}
\put(50,-9){\makebox(0,0)[b]{\scriptsize $\rho_3$}}
\put(75,-9){\makebox(0,0)[b]{\scriptsize $\rho_5$}}
\put(100,-9){\makebox(0,0)[b]{\scriptsize $\rho_6$}}
\put(3,0){\color{gray}\line(1,0){19}}
\put(28,0){\line(1,0){19}}
\put(53,0){\line(1,0){19}}
\put(78,0){\line(1,0){19}}
\put(50,3){\line(0,1){19}}
\put(50,28){\color{gray}\line(0,1){19}}
\put(50,25){\circle{6}}
\put(50,50){\color{gray}\circle{6}}
\put(60,23){\makebox(0,0)[b]{\scriptsize $\rho_4$}}
%\put(60,48){\makebox(0,0)[b]{\scriptsize $\rho_0$}}
\end{picture}.
\end{center}

\vspace{1.5cm}

For the outer automorphisms we consider the twisted extended Dynkin diagram of type $\E_6^{(2)}$

\begin{center}
\begin{picture}(120,24)(10,10)
\put(0,0){\circle{6}}
\put(25,0){\circle{6}}
\put(50,0){\circle{6}}
\put(75,0){\circle{6}}
\put(100,0){\circle{6}}
\put(0,6){\makebox(0,0)[b]{\scriptsize $\rho_0$}}
\put(25,6){\makebox(0,0)[b]{\scriptsize $\rho_4$}}
\put(50,6){\makebox(0,0)[b]{\scriptsize $\rho_3$}}
\put(75,6){\makebox(0,0)[b]{\scriptsize $\rho_{2,5}$}}
\put(100,6){\makebox(0,0)[b]{\scriptsize $\rho_{1,6}$}}
\put(3,0){\line(1,0){19}}
\put(28,0){\line(1,0){19}}
\put(53,-1){\line(1,0){19}}
\put(53,1){\line(1,0){19}}
\put(59,-2.5){$<$}
\put(78,0){\line(1,0){19}}
\end{picture}.
\end{center}

\vspace{1.5cm}

The the solution $(2,0,0,0,0,0,0)$ corresponds to $\varpi$, which has centralizer of type $\F_4$ and diagram 

\begin{center}
\begin{picture}(120,24)(10,10)
\put(0,0){\color{gray}\circle{6}}
\put(25,0){\circle{6}}
\put(50,0){\circle{6}}
\put(75,0){\circle{6}}
\put(100,0){\circle{6}}
%\put(0,6){\makebox(0,0)[b]{\scriptsize $\rho_0$}}
\put(25,6){\makebox(0,0)[b]{\scriptsize $\rho_4$}}
\put(50,6){\makebox(0,0)[b]{\scriptsize $\rho_3$}}
\put(75,6){\makebox(0,0)[b]{\scriptsize $\rho_{2,5}$}}
\put(100,6){\makebox(0,0)[b]{\scriptsize $\rho_{1,6}$}}
\put(3,0){\color{gray}\line(1,0){19}}
\put(28,0){\line(1,0){19}}
\put(53,-1){\line(1,0){19}}
\put(53,1){\line(1,0){19}}
\put(59,-2.5){$<$}
\put(78,0){\line(1,0){19}}
\end{picture}.
\end{center}

\vspace{1.5cm}

Lastly, the solution $(0,1,0,0,0,0,1)$ corresponds to $\hat{t}\varpi$ and a Dynkin diagram of type $\C_4$

\begin{center}
\begin{picture}(120,24)(10,10)
\put(0,0){\circle{6}}
\put(25,0){\circle{6}}
\put(50,0){\circle{6}}
\put(75,0){\circle{6}}
\put(100,0){\color{gray}\circle{6}}
\put(0,6){\makebox(0,0)[b]{\scriptsize $\rho_0$}}
\put(25,6){\makebox(0,0)[b]{\scriptsize $\rho_4$}}
\put(50,6){\makebox(0,0)[b]{\scriptsize $\rho_3$}}
\put(75,6){\makebox(0,0)[b]{\scriptsize $\rho_{2,5}$}}
%\put(100,6){\makebox(0,0)[b]{\scriptsize $\rho_{1,6}$}}
\put(3,0){\line(1,0){19}}
\put(28,0){\line(1,0){19}}
\put(53,-1){\line(1,0){19}}
\put(53,1){\line(1,0){19}}
\put(59,-2.5){$<$}
\put(78,0){\color{gray}\line(1,0){19}}
\end{picture}.
\end{center}

\vspace{1.5cm}

To describe the Galois cohomology groups corresponding to symmetric $k$-varieties let $\mathcal{C}_k$ be the set of distinct isomorphism classes of $k$-involutions of $\Aut(G)$ where $G$ is a reductive linear algebraic group over a field of not characteristic $2$.  We let $Gal_k$ denote the absolute Galois group of the field $k$.  Let $A$ be a $k$-algebra, then as described in \citep{se97} the cohomology group $H^0(Gal_k,\Aut(A,K)) \cong \Aut(A,k)$.  If we then consider a subalgebra of $A \supset A^{r}$ fixed by an element of $[r] \in \mathcal{C}_k$, then $H^1(Gal_k,\Aut(A,A^r,K))$ corresponds to the $K/k$-forms of $A^r$ by \ref{order2fixlem1} and \ref{order2fixlem2}.  And by \ref{fixedlemma}, $H^1(Gal_k,\Aut(A,A^r,K))$ is in bijection with $H^1(Gal_k, Z_G(r))$.  A more detailed description of the Galois cohomology related to $k$-involutions of type $\G_2$ and $\F_4$ can be found in \citep{hu14,hu12}.  

When talking about automorphisms of order $2$ of algebraic groups it is natural to consider their correspondence to $\mathbb{Z}_2$-gradings of the corresponding Lie algebras.  These have been studied recently by Draper, Mart\'in, and Viruel for Lie algebras of type $\mathfrak{g}_2$ \citep{dm06}, $\mathfrak{f}_4$\citep{dm09}, and $\mathfrak{e}_6$ \citep{dv-}.   A \emph{grading} by a finite abelian group $G$ on an algebra $\mathcal{A}$ is a vector space decomposition
\[ \mathcal{A} = \bigoplus_{g\in G} \mathcal{A}_g, \]
satisfying $\mathcal{A}_g \mathcal{A}_h \subset \mathcal{A}_{gh}$ for all $g,h \in G$.

The following two results appear as Theorem 1.38 and 1.39 in \citep{em13}. 

\begin{thm}
Let $\mathcal{A}$ and $\mathcal{B}$ be finite-dimensional (nonassociative) algebras.  Assume $\theta: \Aut(\mathcal{A}) \to \Aut(\mathcal{B})$.  Then, for any abelian group $G$ we have a mapping of $G$-gradings, $\Gamma \to \theta(\Gamma)$, from $G$-gradings on $\mathcal{A}$ to $G$-gradings on $\mathcal{B}$.  If $\Gamma$ and $\Gamma'$ are isomorphic , then $\theta(\Gamma)$ and $\theta(\Gamma')$ are isomorphic.
\end{thm}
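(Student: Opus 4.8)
The plan is to reduce the statement to the functoriality of the group-scheme description of gradings. The key input I would invoke is the standard dictionary (Elduque--Kochetov, and the same circle of ideas underlying the $\mu$-element discussion above): for a finite-dimensional (not necessarily associative) algebra $\mathcal{A}$, a $G$-grading $\Gamma$ on $\mathcal{A}$ is the same datum as a morphism of affine group schemes $\rho_{\Gamma}\colon G^D \to \Aut(\mathcal{A})$, where $G^D$ is the diagonalizable group scheme Cartier-dual to the abelian group $G$ (its character group being $G$), and $\Aut(\mathcal{A})$ is the automorphism \emph{group scheme}. Concretely, the homogeneous component $\mathcal{A}_g$ is recovered as the weight space on which $G^D$ acts through the character $g$; because $G^D$ is diagonalizable its comodules split into such weight spaces over an arbitrary base field, so no algebraic closure is needed and the requirement that $G$ be abelian is exactly what makes this work. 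I would first recall this correspondence together with its isomorphism refinement: two $G$-gradings $\Gamma,\Gamma'$ on $\mathcal{A}$ are isomorphic precisely when the associated morphisms are conjugate by an element of $\Aut(\mathcal{A})$, i.e. $\rho_{\Gamma'} = \mathcal{I}_{\psi}\circ\rho_{\Gamma}$ for some $\psi \in \Aut(\mathcal{A})$, since transporting $\Gamma$ by $\psi$ replaces $\mathcal{A}_g$ by $\psi(\mathcal{A}_g)$ and correspondingly $\rho_{\Gamma}$ by $\mathcal{I}_{\psi}\circ\rho_{\Gamma}$.

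Granting this dictionary, the construction of $\theta(\Gamma)$ is immediate. Interpreting $\theta$ as a morphism of affine group schemes $\Aut(\mathcal{A}) \to \Aut(\mathcal{B})$, I would define $\theta(\Gamma)$ to be the $G$-grading on $\mathcal{B}$ corresponding to the composite $\theta\circ\rho_{\Gamma}\colon G^D \to \Aut(\mathcal{B})$. A composite of morphisms of affine group schemes is again such a morphism, so $\theta\circ\rho_{\Gamma}$ genuinely defines a $G$-grading and the assignment $\Gamma \mapsto \theta(\Gamma)$ is well defined; unwinding the correspondence, the homogeneous components of $\theta(\Gamma)$ are the weight spaces of the $G^D$-action on $\mathcal{B}$ obtained by transporting the action on $\mathcal{A}$ through $\theta$.

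For the isomorphism claim I would carry out the short conjugacy computation. Suppose $\Gamma \cong \Gamma'$, so $\rho_{\Gamma'} = \mathcal{I}_{\psi}\circ\rho_{\Gamma}$ for some $\psi \in \Aut(\mathcal{A})$. Because $\theta$ is a homomorphism it intertwines inner automorphisms, $\theta\circ\mathcal{I}_{\psi} = \mathcal{I}_{\theta(\psi)}\circ\theta$, whence
\[ \theta\circ\rho_{\Gamma'} = \theta\circ\mathcal{I}_{\psi}\circ\rho_{\Gamma} = \mathcal{I}_{\theta(\psi)}\circ(\theta\circ\rho_{\Gamma}). \]
Thus the morphisms attached to $\theta(\Gamma')$ and $\theta(\Gamma)$ differ by conjugation by $\theta(\psi)\in\Aut(\mathcal{B})$, and by the isomorphism criterion in the dictionary this yields $\theta(\Gamma')\cong\theta(\Gamma)$, as required.

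The point requiring genuine care --- and the main obstacle --- is the foundational correspondence rather than the formal manipulation. I would need the equivalence between $G$-gradings and morphisms $G^D \to \Aut(\mathcal{A})$, and the identification of isomorphic gradings with $\Aut(\mathcal{A})$-conjugate morphisms, to be established in the present generality of finite-dimensional nonassociative algebras over a field of characteristic not $2$ or $3$. This is precisely why one must work with the automorphism group \emph{scheme} and not merely its group of rational points, and why $G$ is taken abelian so that $G^D$ is diagonalizable. Once that correspondence is cited (or set up) and $\theta$ is read as a morphism of affine group schemes, everything else is the purely formal transport-of-structure argument above.
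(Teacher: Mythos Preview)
The paper does not give its own proof of this statement: it is quoted verbatim as Theorem~1.38 of Elduque--Kochetov \citep{em13}, so there is no argument in the paper to compare against. Your proposal is correct and is in fact the standard Elduque--Kochetov argument, reducing everything to the dictionary between $G$-gradings and morphisms $G^D\to\Aut(\mathcal{A})$ of affine group schemes (the very dictionary the paper records just below as \citep[Proposition~4.8]{ko09}), together with the observation that isomorphism of gradings corresponds to $\Aut(\mathcal{A})$-conjugacy of these morphisms; the transport $\theta(\Gamma)$ is then defined by postcomposition, and $\theta\circ\mathcal{I}_{\psi}=\mathcal{I}_{\theta(\psi)}\circ\theta$ gives the isomorphism assertion. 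The only caveat is the one you already flag: the hypothesis must be read as ``$\theta$ is a morphism of affine group schemes $\Aut(\mathcal{A})\to\Aut(\mathcal{B})$'' (not merely a map of abstract groups of rational points), since otherwise neither the composition $\theta\circ\rho_\Gamma$ nor the identity $\theta\circ\mathcal{I}_\psi=\mathcal{I}_{\theta(\psi)}\circ\theta$ has the required scheme-theoretic meaning.
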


\begin{thm}
Assume we have an isomorphism $\theta:\Aut(\mathcal{A}) \to \Aut(\mathcal{B})$, then two fine abelian group gradings $\Gamma$ and $\Gamma'$ are equivalent if and only if $\theta(\Gamma)$ and $\theta(\Gamma')$ are equivalent.
\end{thm}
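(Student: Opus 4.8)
The plan is to translate the statement into the language of subgroup schemes of the automorphism group and then observe that an isomorphism of automorphism group schemes preserves every ingredient in sight. Regarding $\Aut(\mathcal{A})$ as an affine group scheme, recall the dictionary underlying \citep{em13}: a $G$-grading $\Gamma$ on $\mathcal{A}$ is the same datum as a homomorphism $\eta_\Gamma \colon G^D \to \Aut(\mathcal{A})$ out of the diagonalizable group scheme $G^D$ dual to $G$, and the homogeneous components of $\Gamma$ are exactly the weight spaces of the quasitorus $Q_\Gamma := \eta_\Gamma(G^D)$. With this description, the grading $\theta(\Gamma)$ manufactured in the previous theorem is the one attached to the composite $\theta \circ \eta_\Gamma$, so that $Q_{\theta(\Gamma)} = \theta(Q_\Gamma)$.

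Under this dictionary the two notions appearing in the statement become intrinsic group-theoretic data. A grading admits no proper refinement exactly when its quasitorus cannot be enlarged, so $\Gamma$ is \emph{fine} if and only if $Q_\Gamma$ is a maximal quasitorus of $\Aut(\mathcal{A})$. Similarly, two fine gradings $\Gamma, \Gamma'$ are \emph{equivalent} precisely when some $\psi \in \Aut(\mathcal{A})$ carries the homogeneous components of $\Gamma$ onto those of $\Gamma'$, and this happens if and only if $Q_\Gamma$ and $Q_{\Gamma'}$ are conjugate in $\Aut(\mathcal{A})$. The theorem therefore reduces to showing that $\theta$ matches up conjugacy classes of maximal quasitori on the two sides, compatibly with $\Gamma \mapsto \theta(\Gamma)$.

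This I would settle with three remarks about the isomorphism $\theta$. First, diagonalizability is intrinsic, so $\theta$ sends quasitori to quasitori; being an inclusion-preserving bijection on closed subgroup schemes, it preserves maximality, so $Q_\Gamma$ is maximal exactly when $\theta(Q_\Gamma) = Q_{\theta(\Gamma)}$ is, and $\theta$ carries fine gradings to fine gradings. Second, $\theta$ respects conjugation, $\theta(g Q g^{-1}) = \theta(g)\,\theta(Q)\,\theta(g)^{-1}$, so conjugate maximal quasitori have conjugate images. Combining these with $Q_{\theta(\Gamma)} = \theta(Q_\Gamma)$ gives one direction: if $\Gamma \sim \Gamma'$ then $Q_\Gamma, Q_{\Gamma'}$ are conjugate, hence so are $Q_{\theta(\Gamma)}, Q_{\theta(\Gamma')}$, whence $\theta(\Gamma) \sim \theta(\Gamma')$. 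For the converse I would apply the same argument to the inverse isomorphism $\theta^{-1}$, which is again an isomorphism of group schemes and hence also preserves maximality and conjugacy; since $\theta^{-1}$ returns $\theta(\Gamma)$ to $\Gamma$ and $\theta(\Gamma')$ to $\Gamma'$, equivalence of the images forces equivalence of the originals.

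I expect the genuine work to sit entirely in the dictionary rather than in the deduction. The delicate point is to verify that equivalence of \emph{fine} gradings is faithfully recorded by conjugacy of maximal quasitori --- a coarser relation than the isomorphism of $G$-gradings handled in the preceding theorem --- and, relatedly, that $\theta$ must be read as an isomorphism of group schemes, so that the properties of being diagonalizable and maximal diagonalizable are preserved. Once the correspondence between fine gradings and maximal quasitori, with equivalence corresponding to conjugacy, is in hand, the result is a formal consequence of $\theta$ being an invertible homomorphism.
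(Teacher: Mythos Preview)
The paper does not supply its own proof of this statement: it is quoted verbatim as Theorem~1.39 of \citep{em13} and left unproved in the text. Your proposal is correct and is precisely the argument one finds in that reference --- the dictionary sending a fine grading to its maximal quasitorus $Q_\Gamma \subset \Aut(\mathcal{A})$, with equivalence of fine gradings corresponding to conjugacy of maximal quasitori, after which the result is immediate from the fact that an isomorphism of affine group schemes carries conjugacy classes of maximal diagonalizable subgroup schemes bijectively to one another.
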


So we see in the cases when $\mathcal{A}$ is an eight dimensional composition algebra or an Albert algebra the abelian group gradings on $\Der(\mathcal{A})$ correspond to the gradings on $\mathcal{A}$, since $\Aut(\mathcal{A}) \cong \Aut(\Der(\mathcal{A}))$.  The $\E_6$ case is slightly different as the center is non-trivial in general.  Letting $\mathfrak{e}_6$ be a simple Lie algebra of type $\E_6$ (for $J$ an Albert algebra $\mathfrak{e}_6 = \Lie(\Inv(J))$ for $J$ an Albert algebra), then the identity component of $\Aut(\mathfrak{e}_6)$ is isomorphic to $\Inv(J)/Z(\Inv(J))$. This gives us the finite abelian gradings that correspond to the inner $k$-involutions, and the outer $k$-involutions correspond to the gradings induced by the symmetric pairs $(\E_6, \F_4)$ and $(\E_6,\C_4)$. All of these results and much more on gradings of $\mathfrak{e}_6$ can be found in \citep{dv-}.  

When considering these exceptional groups we can also make the connection to $\mathbb{Z}_2$-gradings of their respective defining representations; octonion algebras, Albert algebras, and Brown algebras.  This is often how gradings of simple Lie algebras ar obtained as well, for example \citep{dm06, dm09, el98, el09}.  The following result can be found as Proposition 4.8 in \citep{ko09} where $G^D$ is the Cartier dual of $G$.

\begin{prop}
The gradings on a $k$-algebra $\mathcal{A}$ by a finitely generated abelian group $G$ are in one-to-one correspondence with the embeddings of the algebraic group scheme $G^D$ into $\Aut(\mathcal{A})$.
\end{prop}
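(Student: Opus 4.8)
The plan is to prove this via the standard dictionary between gradings and actions of diagonalizable group schemes, carried out in the functor-of-points language so that it stays valid even when $G^D$ is non-reduced. First I would recall that for a finitely generated abelian group $G$ the Cartier dual is the diagonalizable group scheme $G^D = \Spec k[G]$, whose coordinate Hopf algebra is the group algebra $k[G]$ (with each $g$ grouplike) and whose character group is canonically $G$. Next I would set up the comodule correspondence: a vector-space grading $\mathcal{A} = \bigoplus_{g \in G} \mathcal{A}_g$ is precisely the same data as a $k[G]$-comodule structure $\rho: \mathcal{A} \to \mathcal{A} \otimes_k k[G]$, determined by $\rho(a) = a \otimes g$ for $a \in \mathcal{A}_g$. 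By diagonalizability of $G^D$ every $k[G]$-comodule decomposes into weight spaces indexed by the characters $g \in G$, and the homogeneous components are recovered as exactly these weight spaces; this yields a bijection between $G$-gradings on the underlying space of $\mathcal{A}$ and morphisms of affine group schemes $G^D \to \operatorname{GL}(\mathcal{A})$.

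The second step is to translate the multiplicative compatibility into the condition that this morphism factors through the closed subgroup scheme $\Aut(\mathcal{A}) \subset \operatorname{GL}(\mathcal{A})$. For a $k$-algebra $R$, an $R$-point $\chi \in G^D(R) = \Hom(G, R^{\times})$ acts on $\mathcal{A} \otimes_k R$ by $a \otimes 1 \mapsto \chi(g)(a \otimes 1)$ for $a \in \mathcal{A}_g$. I would check that $\chi$ is an algebra automorphism of $\mathcal{A} \otimes_k R$ functorially in $R$ if and only if $\mathcal{A}_g \mathcal{A}_h \subseteq \mathcal{A}_{gh}$ for all $g,h$: once $a_g b_h$ lands in $\mathcal{A}_{gh}$ the identity $\chi(a_g)\chi(b_h) = \chi(g)\chi(h)\, a_g b_h = \chi(gh)\, a_g b_h = \chi(a_g b_h)$ is automatic, while evaluating at the tautological point $R = k[G]$ forces the grading condition back out. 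Conversely, any morphism $G^D \to \Aut(\mathcal{A})$ composed with the inclusion into $\operatorname{GL}(\mathcal{A})$ produces a vector-space grading by the previous step, and the factorization guarantees the algebra condition $\mathcal{A}_g \mathcal{A}_h \subseteq \mathcal{A}_{gh}$.

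For the embedding clause I would use the fact that a homomorphism out of a diagonalizable group scheme is a closed immersion exactly when the induced map on character groups $G = X^*(G^D) \to X^*(\text{image})$ is surjective, i.e. when the support of the grading generates $G$. This faithfulness condition is what the word \emph{embedding} records, and for a grading whose universal grading group equals $G$ one obtains a genuine closed embedding $G^D \hookrightarrow \Aut(\mathcal{A})$, so the bijection of the first two steps restricts to the asserted correspondence.

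The main obstacle I anticipate is the non-reduced case: when $\operatorname{char} k$ divides the torsion of $G$, the scheme $G^D$ is not reduced and is not determined by its $k$-points, so none of the above may be tested on geometric points alone. The delicate step is therefore the ``only if'' direction of the algebra-automorphism factorization, where one must argue scheme-theoretically at the level of the Hopf algebra $k[G]$ — evaluating at the tautological $R = k[G]$ point — to recover the full grading condition $\mathcal{A}_g\mathcal{A}_h \subseteq \mathcal{A}_{gh}$ rather than merely its specializations.
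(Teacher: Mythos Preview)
The paper does not supply a proof of this proposition: it is quoted as Proposition~4.8 of \citep{ko09} and used as a black box, so there is no argument in the paper to compare against. Your sketch is the standard proof and is essentially correct: the equivalence between $G$-gradings on the underlying vector space and $k[G]$-comodule structures, and hence morphisms $G^D \to \GL(\mathcal{A})$, is exactly the right dictionary, and your check that the multiplicativity condition $\mathcal{A}_g\mathcal{A}_h \subset \mathcal{A}_{gh}$ is equivalent to the factorization through $\Aut(\mathcal{A})$ (tested at the tautological $R=k[G]$ point to handle the non-reduced case) is the correct way to proceed.

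One point worth tightening is your treatment of the word \emph{embedding}. As you observe, arbitrary $G$-gradings correspond to arbitrary morphisms $G^D \to \Aut(\mathcal{A})$, whereas closed immersions correspond precisely to gradings whose support generates $G$. The statement as written in the paper (and in \citep{ko09}) is slightly loose on this point; depending on conventions, ``grading by $G$'' sometimes tacitly means that $G$ is the universal grading group, in which case the correspondence with embeddings is exact. Your third paragraph identifies this correctly, but you should state explicitly which reading you adopt so that the bijection you assert is literally true rather than only true up to this caveat.
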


It is well known and straight forward to verify that $(\mathbb{Z}_n)^D \cong \mu_n$ as group schemes.  This gives us an immediate correspondence between inner $k$-involutions and $\mathbb{Z}_2$-gradings of these defining representations, since over an algebraically closed field of characteristic not $2$ inner automorphisms of order $2$ correspond with embeddings of $\mu_2$ into $\Aut(\mathcal{A})$ \citep{se06}.  The gradings of octonion algebras have been studied in \citep{el98} and Albert algebras in \citep{el09} and \citep{dm09}.  It is also worth noting that as we descend away from algebraic closure the tendency for conjugacy classes of elements of order $n$ to break up into more conjugacy classes is partially determined by the Galois cohomology of $\mu_2$.  When $k$ is a field these elements of finite order correspond to embeddings of $\mu_n(k)$, and so these obstructions correspond to
\[ H^1(k,\mu_n) \cong G_m(k)/G_m(k)^n, \]
when $\ch(k) \neq n$.

\end{document}